\documentclass{article}
                            
\usepackage{geometry} 
\usepackage{mathtools} 
\usepackage{amsmath}
\usepackage{setspace}  
\usepackage{amssymb} 
\usepackage{amsthm} 
\usepackage{graphicx}
\usepackage{mathrsfs} 
\usepackage{subcaption}
\usepackage{pinlabel}
\usepackage{rotating}

\linespread{1.3}

\newtheorem{thm}{Theorem}[section]
\newtheorem{cor}[thm]{Corollary}
\newtheorem{lem}[thm]{Lemma}

\newtheorem{prop}[thm]{Proposition}

\newcommand{\R}{\mathbb{R}}
\newcommand{\C}{\mathbb{C}}
\newcommand{\N}{\mathbb{N}}

\DeclareMathOperator{\Cyl}{Cyl}

\DeclareMathOperator{\Tw}{Tw}

\DeclareMathOperator{\Lab}{Lab}

\begin{document}

\title{An Infinite Family of Links\\with Critical Bridge Spheres}
\author{Daniel Rodman}
\date{}
\maketitle

\begin{abstract}
A closed, orientable, splitting surface in an oriented 3-manifold is a topologically minimal surface of index \(n\) if its associated disk complex is \((n-2)\)-connected but not \((n-1)\)-connected.
A critical surface is a topologically minimal surface of index 2.
In this paper, we use an equivalent combinatorial definition of critical surfaces to construct the first known critical bridge spheres for nontrivial links.
\end{abstract}

\section{Introduction}\label{sec:intro}
In the 1960s Haken developed a framework for studying manifolds that contain incompressible surfaces.
He demonstrated that for such manifolds, one can reduce the proofs of many theorems to induction arguments by using ``hierarchies," consecutively cutting a manifold into pieces along incompressible surfaces until a collection of balls is obtained.
This powerful approach clearly demonstrated the utility of incompressible surfaces in the study of low-dimensional topology.

In 1987 Casson and Gordon introduced the idea of strongly irreducible surfaces \cite{CG87}.
Unlike incompressible surfaces which have no compressing disks, strongly irreducible surfaces have potentially many compressing disks to both sides, but any two compressing disks on opposite sides necessarily intersect.
Somewhat surprisingly, it turns out that many theorems that are easy to prove for manifolds with incompressible surfaces also hold true for manifolds with strongly irreducible surfaces, although the proofs can be somewhat more involved.
Casson and Gordon's watershed result in the theory of strongly irreducible surfaces is that if a closed, connected, orientable 3-manifold's minimal genus Heegaard splitting is not strongly irreducible, then the manifold must contain an essential surface \cite{CG87}.

In 2002, Bachman introduced the notion of a critical surface \cite{Bachman02}, which he followed in 2009 with the introduction of the more general concept of a topologically minimal surface \cite{Bachman09}.
A closed, orientable, splitting surface in an oriented 3-manifold \(M\) is a topologically minimal surface of index \(n\) if its associated disk complex is \((n-2)\)-connected but not \((n-1)\)-connected.
This latter definition provides the framework into which incompressible, strongly irreducible, and critical surfaces all fit:
They are topologically minimal surfaces of index 0, 1, and 2 respectively.
Just like the incompressible and strongly irreducible surfaces before them, critical surfaces and topologically minimal surfaces in general have been used to prove long-standing conjectures that had remained unresolved for many years.
For example, Bachman used these surfaces to prove The Gordon Conjecture\footnote{Gordon Conjecture: If the Heegaard splitting \(U\cup_H V\) is a connect sum of two Heegaard splittings, and \(H\) is stabilized, then one of its summands is stabilized.} and to provide counterexamples to the Stabilization Conjecture\footnote{Stabilization Conjecture: Given any pair of Heegaard splittings, stabilizing the higher genus splitting once results in a stabilization of the other splitting.} \cite{Bachman_Stab_Conj}.

Most of the work done thus far regarding topologically minimal surfaces deals specifically with surfaces which are Heegaard splittings, two-sided surfaces that split a manifold into two compression bodies.
Less is known about topologically minimal bridge surfaces, which are a natural type of surface to consider when the 3-manifold is a link complement.
Lee has shown in \cite{Lee2} that all bridge spheres for the unknot with any number of bridges in \(S^3\) are topologically minimal, and his results provide upper bounds for the indices of such bridge spheres.  In particular, he concludes that the bridge sphere for an unknot in a 3-bridge position has index exactly 2 (i.e., it is critical).

In this paper we provide the first known examples of critical bridge spheres for nontrivial links.
Our construction is inspired by recent work of Johnson and Moriah \cite{Jesse} in which they construct links with bridge surfaces of arbitrarily high distance.
The central result of this paper is the following:

\vspace{6pt}
\noindent\textbf{Theorem \ref{thm:mainthm}.} \textit{There is an infinite family of nontrivial links with critical bridge spheres.}
\vspace{6pt}

In Section \ref{sec:defs} we go over some of the foundational topological definitions upon which this paper relies, including an equivalent, combinatorial definition of a critical surface.
Then in Section \ref{sec:defs} we describe what it means for a link to be in a plat position.
Following that, in Section \ref{sec:setting} we embed a link \(L\) in \(S^3\) in a plat position with bridge sphere \(F\) and discuss some of the specific details of the embedding as well as build some of the tools (certain arcs, loops, disks, and projection maps) which we will use throughout the rest of the paper.
Sections \ref{sec:defs} and \ref{sec:setting} should be considered setup for the rest of the paper, and this is essentially the same setup as in Johnson and Moriah's paper \cite{Jesse}.
In particular we make use of Johnson and Moriah's plat links and projection maps.
In Section \ref{sec:labyrinth} we develop a link diagramatic way to visualize boundary loops of compressing disks for \(F\).
Theorem \ref{thm:mainthm} is proved in Sections \ref{sec:Cintersectsgamma} and \ref{sec:proof}.

\section{Definitions}\label{sec:defs}
Suppose \(\Sigma\) is a compact, orientable surface embedded in a compact, orientable 3-manifold \(M\), and \(D\) is a disk embedded in \(M\) (not necessarily properly).
\(D\) is a \textbf{compressing disk} for \(\Sigma\) if \(D\cap \Sigma=\partial D\), and \(\partial D\) neither bounds a disk in \(\Sigma\) nor is parallel to a boundary component of \(\Sigma\).

%

A compact, orientable surface \(\Sigma\) embedded in a connected 3-manifold \(M\) is a \textbf{splitting surface} if \(M\backslash\Sigma\) has two components.
We associate a simplicial complex \(\Gamma\) called the \textbf{disk complex} to \((M,\Sigma)\) in the following way:
Vertices of \(\Gamma\) are isotopy classes of compressing disks for \(\Sigma\).
A set of \(m+1\) vertices will be filled in with an \(m\)-simplex if and only if the corresponding isotopy classes of compressing disks have pairwise disjoint representatives.
\(\Sigma\) is called a \textbf{topologically minimal surface of index \(n\)} if \(\Gamma\) is empty, or if it is \((n-2)\)-connected but not \((n-1)\)-connected.
\(\Sigma\) is called a \textbf{critical surface} if it is a topologically minimal surface of index 2.
In \cite{Bachman09}, Bachman gives an alternative, combinatorial definition for critical surfaces and proves the two definitions are equivalent.
This second definition, given below, is the one we will utilize in this paper. 

A \textbf{critical surface} is a splitting surface \(\Sigma\subset M\) whose isotopy classes of compressing disks can be partitioned into \(\mathcal{C}_1\sqcup \mathcal{C}_2\) in a way that satisfies the following two conditions:

\begin{enumerate}
\item\label{itm:criticaldef_RedIntersectsBlue} Whenever \([C]\in\mathcal{C}_1\) and \([D]\in\mathcal{C}_2\) for \(C\) and \(D\) on opposite sides of \(\Sigma\), \(\partial C\cap\partial D\) must be nonempty.

\item\label{itm:criticaldef_disjointdisks} For \(i=1,2\), there exists a pair of disjoint compressing disks, one on either side of \(\Sigma\), where each disk belongs to an isotopy class in \(\mathcal{C}_i\).
\end{enumerate}

Any link \(L\subset S^3\) can be isotoped so that all of its maxima lie above all of its minima (with respect to the standard height function on \(S^3\)).
After such an isotopy, \(L\) is said to be in \textbf{bridge position}.
Let \(\eta(L)\) be an open regular neighborhood of \(L\), and let \(F\) be a level sphere in \(S^3\) which separates all of the maxima of \(L\) from all of the minima.
Then the surface \(F'=F\backslash\eta(L)\subset M=S^3\backslash\eta(L)\) (a sphere with a finite number of open disks removed) is called a \textbf{bridge sphere} for \(L\).
See Figure \ref{fig:compression_disk_bdg_sphere}.

In our context, studying the bridge sphere \(F'\subset S^3\backslash\eta(L)\) is equivalent to studying the ``sphere \(F\subset S^3\) with marked points", where the \textbf{marked points} are the points of \(L\cap F\).
A disk in \(S^3\) is considered a compressing disk for the sphere \(F\) with marked points if and only if it is a compressing disk for \(F'\) in \(M\).
This implies that a compressing disk for \(F\) cannot intersect \(L\), and two compressing disks for \(F\) are considered to be in the same isotopy class if and only if they are isotopic in \(S^3/\eta(L)\).

A \textbf{bridge arc} is a component of \(L\backslash F\), and each of these bridge arcs has exactly one critical point, so each is parallel into \(F\).
An isotopy of a bridge arc into \(F\) sweeps out a disk, called a \textbf{bridge disk}.
(Note, bridge disks are not unique, even up to isotopy.)

\begin{figure}[ht]
\centering
\labellist
\pinlabel {\(F\)} at -13 106
\pinlabel {\(L\)} at 345 200
\endlabellist
\includegraphics[scale=.5]{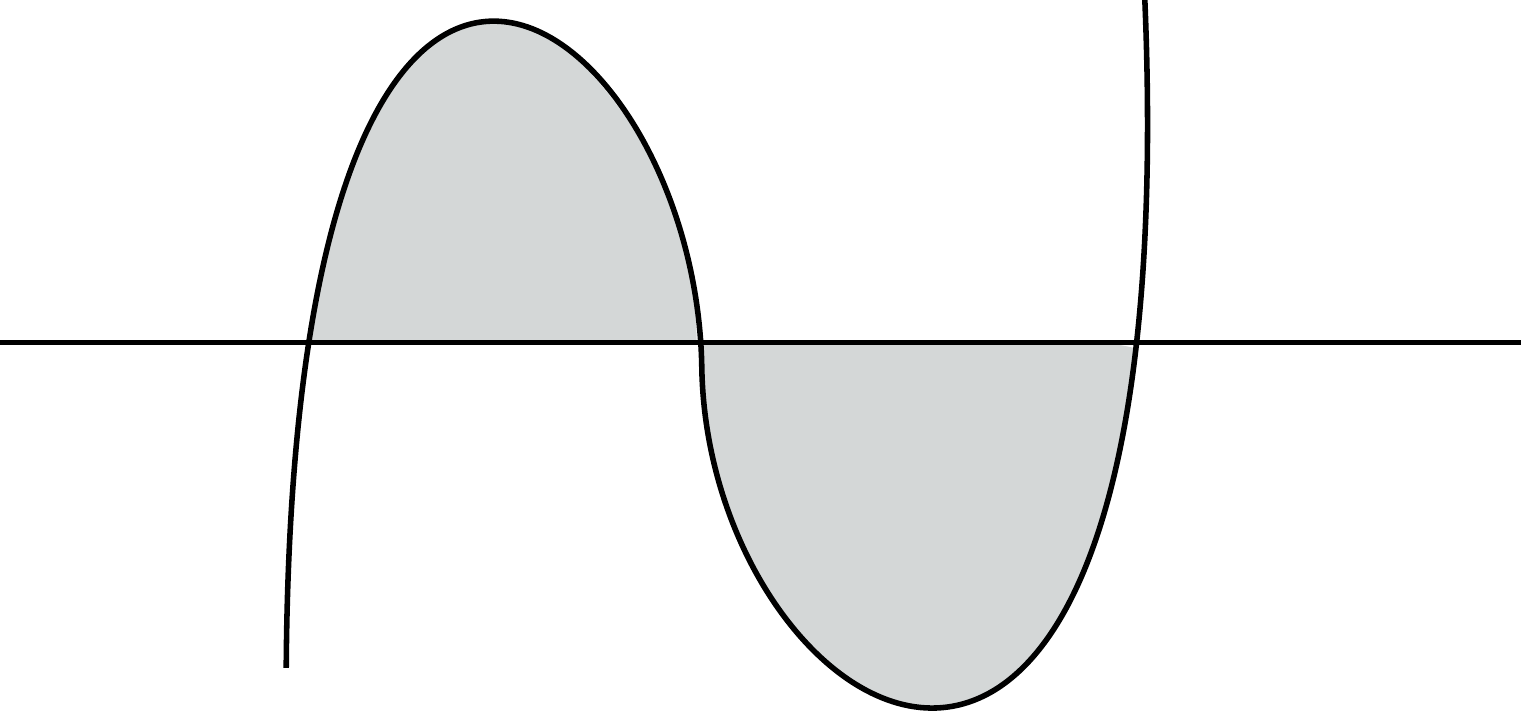}
\caption{If a bridge disk on one side of \(F\) intersects a bridge disk below \(F\) in a single point of the link, and are disjoint otherwise, then the link is perturbed.}
\label{fig:perturbed}
\end{figure}

A link is \textbf{perturbed} if there is a bridge disk above \(F\) and another bridge disk below \(F\) which intersect in a point of \(L\), and which are otherwise disjoint.
We will say the link is \textbf{perturbed at} a bridge arc \(\alpha\) if \(\alpha\) corresponds to either of these two bridge disks.
If a link \(L\) is perturbed, then there is an isotopy of \(L\) through the bridge disks which reduces the number of critical points of \(L\) by one maximum and one minimum.

\begin{figure}[ht]
\centering
\includegraphics[scale=.5]{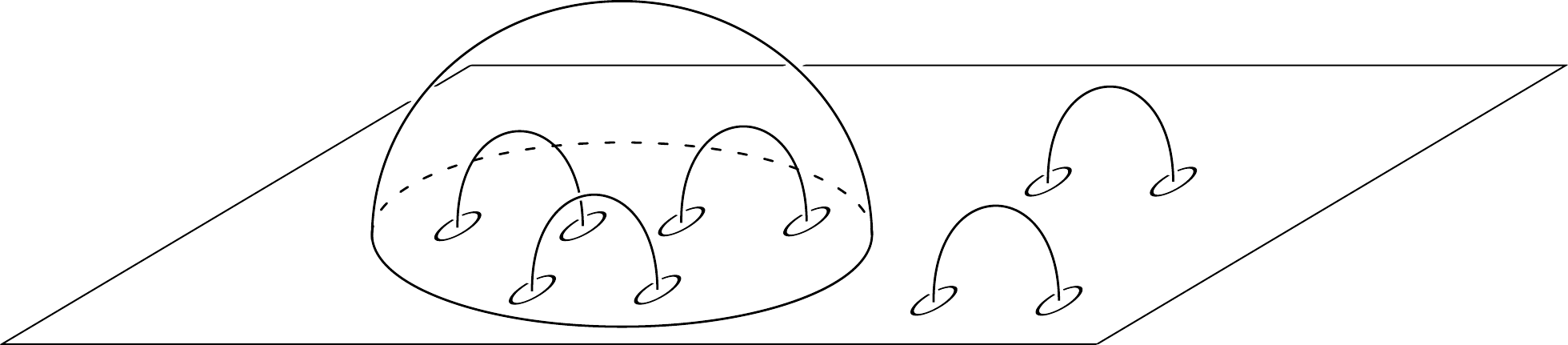}
\caption{A bridge sphere and a compressing disk.}
\label{fig:compression_disk_bdg_sphere}
\end{figure}

\begin{figure}[ht]
\centering
\begin{subfigure}[b]{.45\textwidth}
\includegraphics[width=\textwidth]{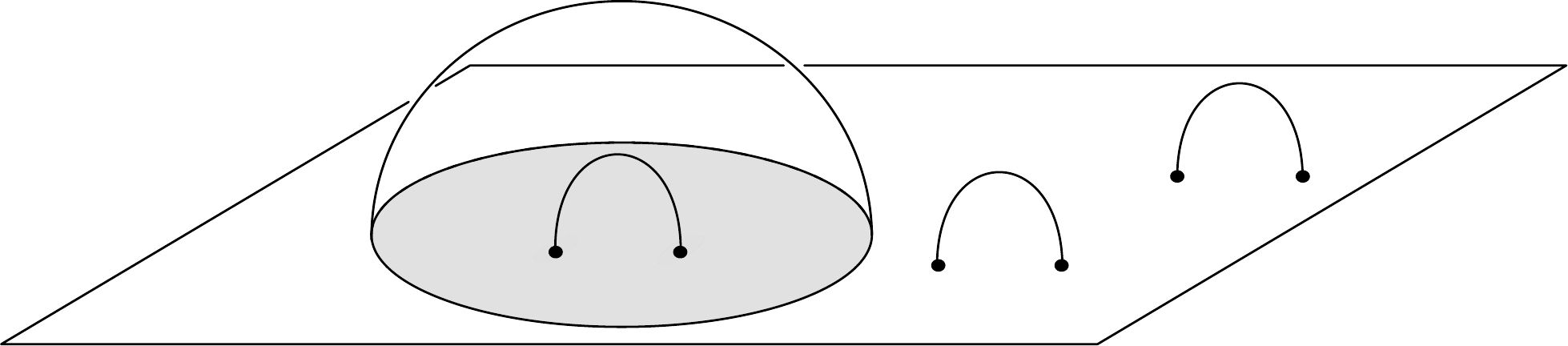}
\caption{}
\end{subfigure}
\begin{subfigure}[b]{.45\textwidth}
\includegraphics[width=\textwidth]{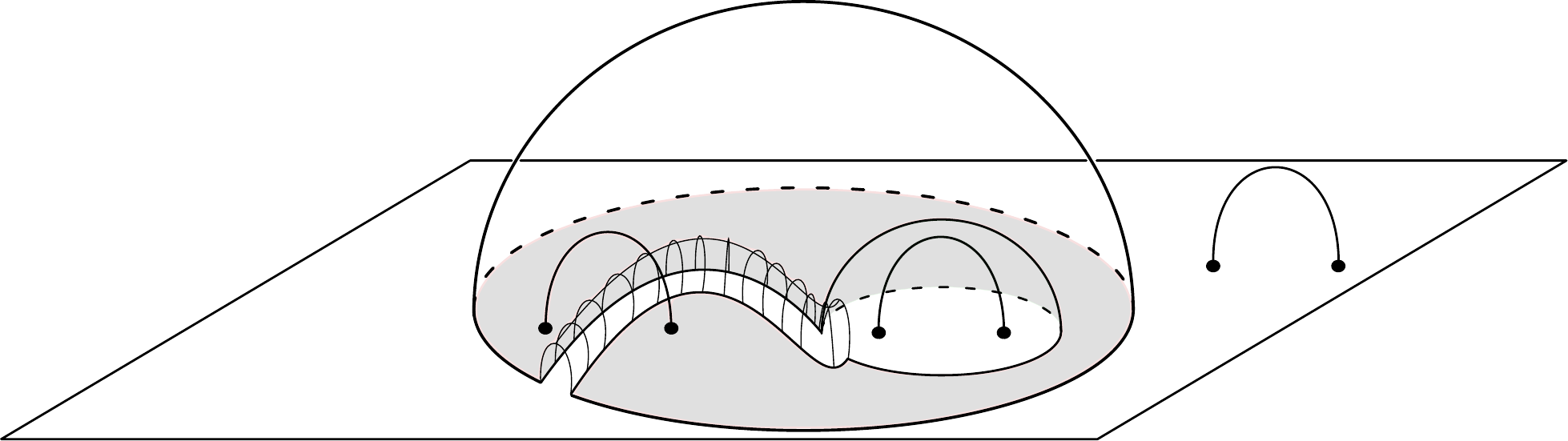}
\caption{}
\end{subfigure}
\caption{The two disks depicted are both caps for the leftmost bridge arc. In each figure, the disk in \(F\) with two marked points is shaded.}
\label{fig:cap_examples}
\end{figure}

A \textbf{cap} for \(F\) is a compressing disk \(C\) such that \(\partial C\) bounds a disk in \(F\) that contains exactly two marked points.
The existence of caps is guaranteed by the existence of bridge disks.
See Figure \ref{fig:cap_examples}.

\begin{figure}[ht]
\centering
\includegraphics[scale=.5]{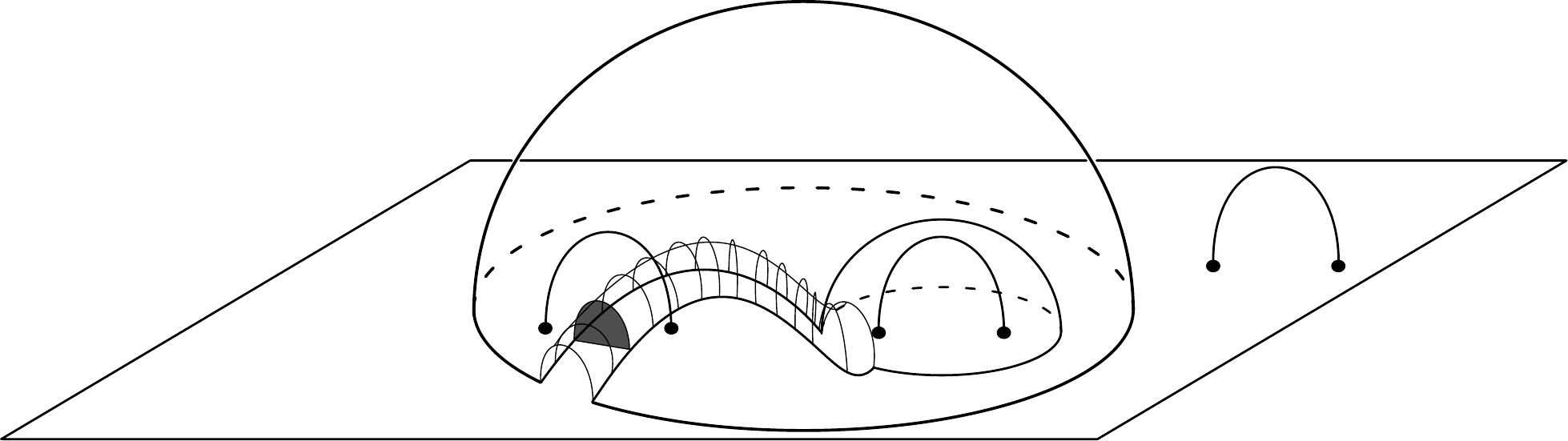}
\caption{A boundary-compressing disk is shaded dark gray.}
\label{fig:boundary-compressing_disk}
\end{figure}

We will also find useful the notions of boundary-compressing  along boundary-compressing disks. Suppose \(\Sigma\) is a surface with boundary properly embedded in a 3-manifold \(M\). A \textbf{boundary-compressing disk} (See Figure \ref{fig:boundary-compressing_disk} for an example) is defined to be a disk \(D\) with the following properties: 
\begin{itemize}
\item \(D\cap\partial M\) is an arc \(\beta\).
\item \(D\cap\Sigma\) is an arc \(\tilde{\beta}\).\footnote{Note: Our definition here is looser than the standard definition since we don't require \(\tilde{\beta}\) to be an \textit{essential} arc in \(\Sigma\). We drop this condition because we will want to perform boundary-compressions on disks, which have no essential arcs.}
\item \(\partial D=\beta\cup\tilde{\beta}\).
\item \(\beta\cap\tilde{\beta}=\partial\beta=\partial\tilde{\beta}\).
\end{itemize}


\begin{figure}[ht]
\centering
\includegraphics[scale=.2]{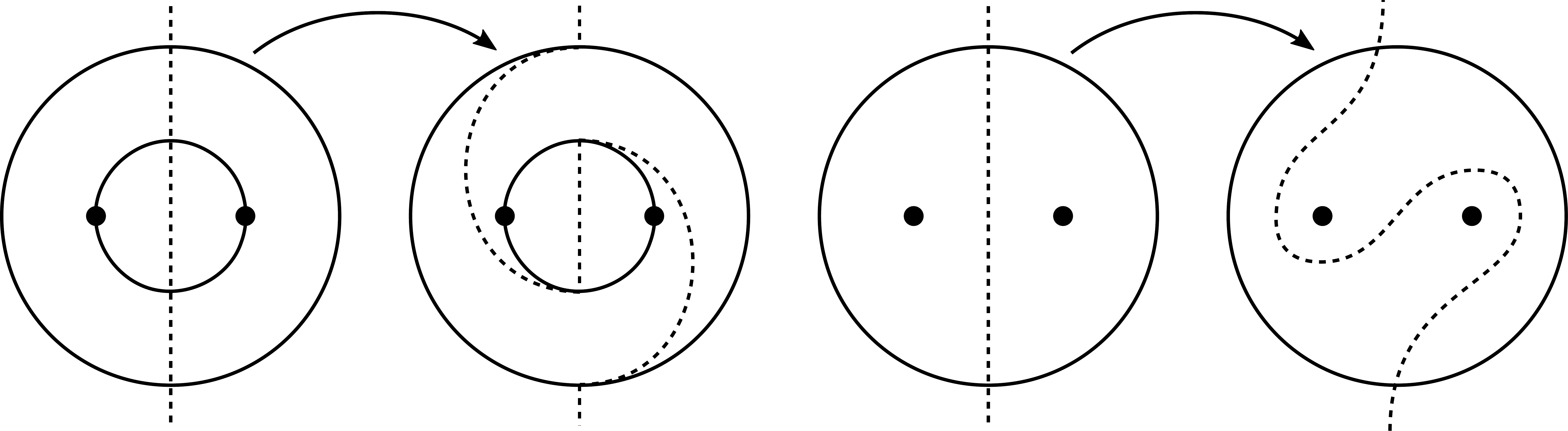}
\put(-305,102){Half twist}
\put(-128,102){Smoothed version}
\put(-341,55){\(D\)}
\put(-358,72){\(A\)}
\caption{Half Twist}
\label{fig:half_twist}
\end{figure}

Next we define a half twist. 
Let \(U\) be a twice punctured (marked) disk.
The half twist can be simply understood to be the homeomorphism \(U\rightarrow U\) depicted in Figure \ref{fig:half_twist}.
To be technical, the \textbf{half twist} \(H\) of \(U\) is the homeomorphism \(H:U\rightarrow U\) defined as follows:
View \(U\) as a union \(A\cup_\phi D\) of the closed unit disk \(D\subset\C\) with an outer annulus \(A\) whose intersection is an embedded circle containing the two marked points.
See Figure \ref{fig:half_twist}.
Give \(D\) the coordinates \(re^{i\theta}\), and identify \(A\) with \(S^1\times I\), with the coordinates \({(\theta,t)}\). 
(\(A\) is glued to \(D\) by the map \(\phi\) taking \({(\theta,0)}\) in one component of \(\partial A\) to \(e^{i\theta}\) in \(\partial D\).)
Now define

\[
H(p)=
\begin{cases}
\begin{array}{cl}
(\theta+\pi t,t) & \text{for } p=(\theta,t)\in A \\ 
re^{i(\theta+\pi)} & \text{for } p=re^{i\theta}\in D
\end{array} 
\end{cases}
\]

For any punctured (marked) surface \(S\) with more than one puncture (marked point), if \(l\subset S\) is a loop which bounds an embedded twice-punctured (marked) disk \(U_l\subset S\) we can identify \(U_l\) with \(U\) and define the \textbf{half twist \(H_l\) about \(l\)} to be the homeomorphism \(H_l:S\rightarrow S\) 

\[
H_l(p)=
\begin{cases}
\begin{array}{cl}
H(p) & \text{for } p\in U \\ 
p & \text{for } p\not\in U
\end{array} 
\end{cases}
\]


\begin{figure}[ht]
\centering
\includegraphics[scale=.7]{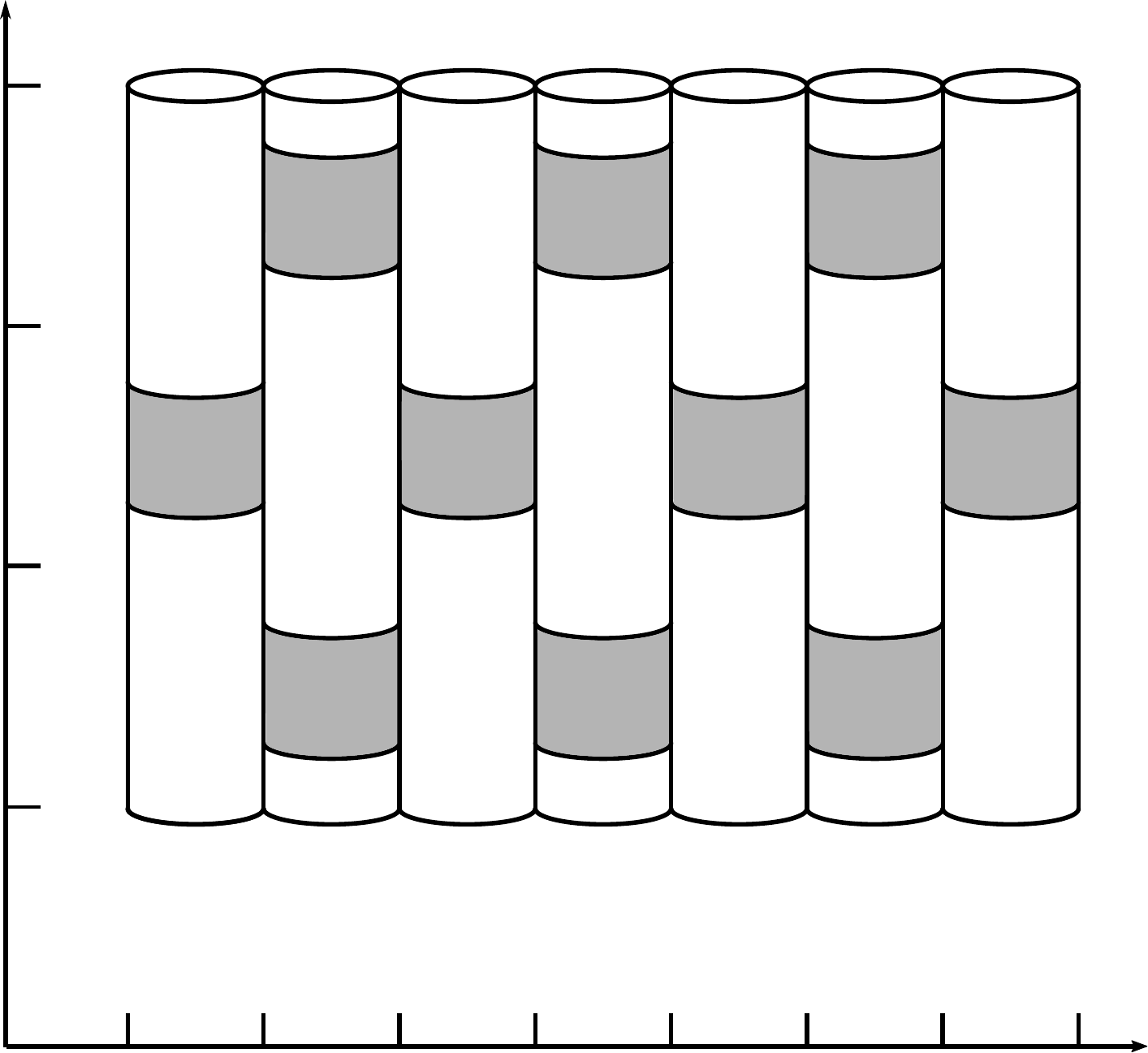}
\put(-292.1,-2){0}
\put(-292.1,58){1}
\put(-292.1,118){2}
\put(-292.1,178){3}
\put(-292.1,238){4}
\put(-292.1,260){\(z\)}
\put(-286.1,-10){0}
\put(-256.1,-10){1}
\put(-222.2,-10){2}
\put(-188.4,-10){3}
\put(-154.5,-10){4}
\put(-120.6,-10){5}
\put(-86.7,-10){6}
\put(-52.9,-10){7}
\put(-19,-10){8}
\put(0,-5){\(x\)}
\put(-212.1,85){\(\Tw_2^1\)}                     
\put(-144.4,85){\(\Tw_2^2\)}
\put(-76.8,85){\(\Tw_2^3\)}
\put(-211.9,205.3){\(\Tw_4^1\)}
\put(-144.3,205.3){\(\Tw_4^2\)}
\put(-76.8,205.3){\(\Tw_4^3\)}
\put(-245.7,145.1){\(\Tw_3^1\)}
\put(-178.1,145){\(\Tw_3^2\)}                    
\put(-110.5,145.1){\(\Tw_3^3\)}
\put(-43,145.1){\(\Tw_3^4\)}                         
\put(-245.7,250){\(\Cyl^1\)}
\put(-211.9,250){\(\Cyl^2\)}
\put(-178.1,250){\(\Cyl^3\)}
\put(-144.3,250){\(\Cyl^4\)}
\put(-110.5,250){\(\Cyl^5\)}
\put(-76.8,250){\(\Cyl^6\)}
\put(-43,250){\(\Cyl^7\)}
\caption{The cylinders which form a frame for plat links. In this figure, \(h=b=4\).}
\label{fig:cylinder_frame_unlabeled}
\end{figure}

Now we will define plat position for a link.
We start by describing plat position for a braid.
Fix integers \(h\geq 2\) and \(b\geq 3\).  We begin by constructing a set of vertical cylinders in \(\R^3\) which will provide a frame for our braid.  (See Figure \ref{fig:cylinder_frame_unlabeled}.)
In the \(xy\)-plane, define \(c^1,c^2,\hdots, c^{2b-1}\) to be the circles of radius \(\frac{1}{2}\) such that \(c^j\) has center \((j+\frac{1}{2},0,0)\).
Now define \(\Cyl^j\) to be the cylinder obtained by crossing \(c^j\) with the \(z\)-interval \([{1,h}]\).

Next we define the \textbf{twist regions}, subcylinders of \(\{\Cyl^j\}\) depicted in Figure \ref{fig:cylinder_frame_unlabeled}.
Let \(i\in\{2,3,\hdots, h\}\).
For odd \(i\), let \(j\) range from \(1\) to \(b\), for even \(i\), let \(j\) range from \(1\) to \(b-1\). Then define 

\begin{displaymath}
\Tw_i^j = \left\{
\begin{array}{ll}
c^{2j-1}\times {[{i-\frac{3}{4}},{i-\frac{1}{4}}]} &  i \text{ odd}\\
c^{2j}\times {[{i-\frac{3}{4}},{i-\frac{1}{4}}]} &  i \text{ even}
\end{array}
\right.
\end{displaymath} 

In this paper, superscripts will usually denote an object's horizontal position (i.e., in the \(x\)-direction), and subscripts will usually denote an object's vertical position (i.e., in the \(z\)-direction).  This is reminiscent of typical matrix notation: an entry \(a_i^j\) of a matrix is in the \(i^\text{th}\) position vertically and the \(j^\text{th}\) position horizontally.  Here, however, unlike in a matrix, an array of objects is enumerated from bottom to top instead of from top to bottom.

Now we define a (non-closed) braid \(B\) to be in \((h,b)\)-\textbf{plat position} if it satisfies the following conditions:
\begin{enumerate}
\item \(B\) is a \(2b\)-strand braid embedded in \(\bigcup\Cyl^j\) whose bottom endpoints' coordinates are \(({j,0,1})\) for \(j=1,2,\hdots, 2b\) and whose top endpoints' coordinates are \(({j,0,h})\) for \(j=1,2,\hdots, 2b\).

\item Arcs of \(B\) are partitioned into two types: Vertical arcs which lie in the intersection of \(\bigcup\Cyl^j\) with the \(xz\)-plane, and Twisting arcs which are not contained in the \(xz\)-plane but are properly embedded in the twist regions.

\item Every Twisting arc is strictly increasing in a twist region, and as it ascends, it moves either strictly clockwise around the cylinder or strictly counterclockwise.

\end{enumerate}

These conditions guarantee that each twist region contains exactly two arcs of \(B\).
Observe that the endpoints of every Twisting arc must lie in the \(xz\)-plane.
Since Twisting arcs move strictly clockwise or counterclockwise, they intersect the \(xz\)-plane minimally.
That is, there is no isotopy of a Twisting arc in a twist region, relative its endpoints, which would decrease the intersection of that Twisting arc and the \(xz\)-plane. 
Looking down at the link from above, if the Twisting arc moves counterclockwise as it ascends, we define \(t\) to be \(-1\) plus the number of times the Twisting arc intersects the \(xz\)-plane.
Similarly, if the Twisting arc moves clockwise as it ascends, we define \(-t\) to be \(-1\) plus the number of times the Twisting arc intersects the \(xz\)-plane.
In other words, to each twisting arc we assign an integer \(t\) such that the twisting arc travels around the twist region through an angle of \(t\pi\).
Notice that  the other twisting arc in the same twist region must twist through an equal angle \(t\pi\) (or else the two arcs would intersect).
Thus we can define the number \(t\) to be the \textbf{twist number} corresponding to that twist region.
We will use \(t_i^j\) to denote the twist number for \(\Tw_i^j\).

\begin{figure}[ht]
\centering
\includegraphics[scale=.25]{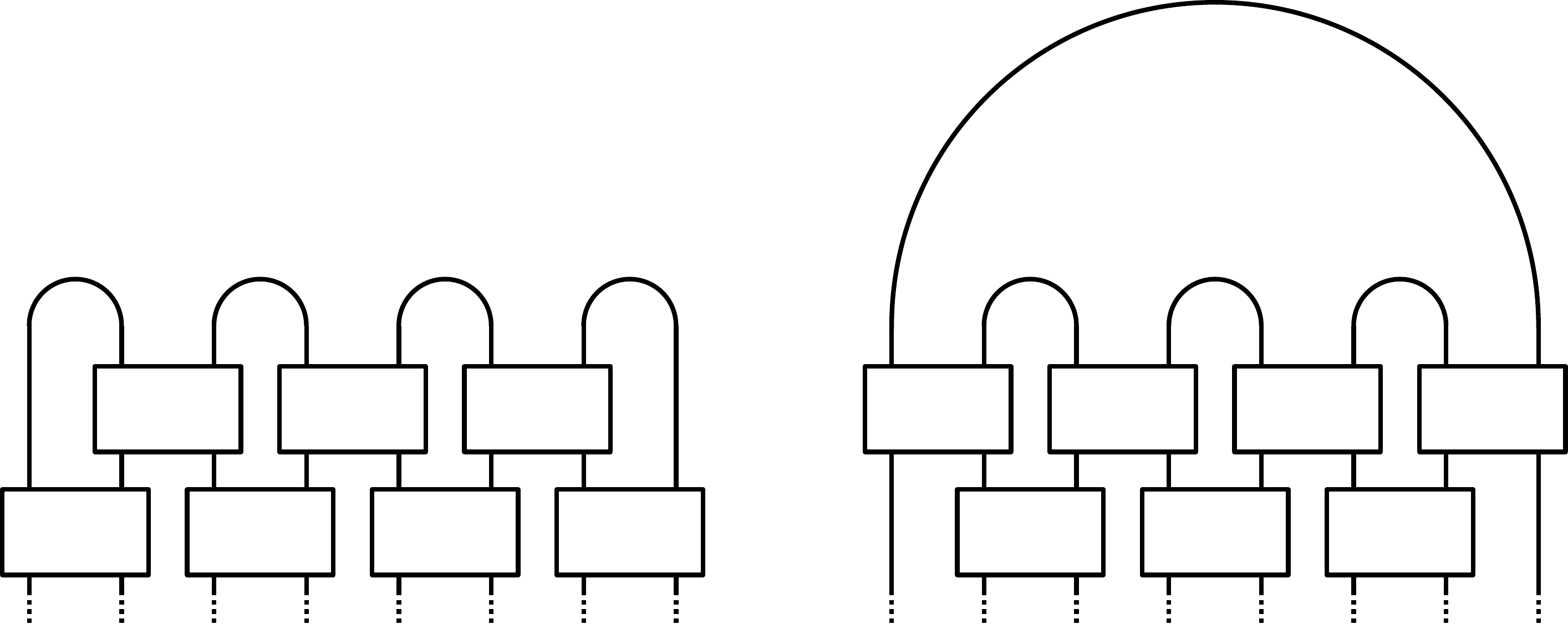}
\vspace{10pt}
\caption{Closing a braid in \({(h,b)}\)-plat position.
The left picture depicts the case when \(h\) is even, and the right picture depicts the case when \(h\) is odd.
In this paper, we only consider \(h=4\).}
\label{fig:plat_link_tops}
\end{figure}

There is a standard way to create a link from a braid in plat position.
See Figure \ref{fig:plat_link_tops}.
Along the bottom of \(B\), we have a row of \(2b\) endpoints at a height of \(z=1\).
For each odd \(j\) between \(1\) and \(2b\), connect the endpoints \((j,0,1)\) and \((j+1,0,1)\) with an lower semicircle in the \(xz\)-plane. 
The way we connect the top endpoints of \(B\) depends on the parity of \(h\).
Notice that if \(h\) is even, then the highest row of twist regions consists of \(b-1\) twist regions.
Along the top of \(B\) we have a row of \(2b\) endpoints at a height of \(z=h\).
For each odd \(j\) between \(1\) and \(2b\), connect the endpoints \((j,0,h)\) and \((j+1,0,h)\) with an upper semicircle in the \(xz\)-plane.
If \(h\) is odd, there are \(b\) twist regions at the top. 
For each even \(j\) between \(2\) and \(2b-2\), connect the endpoints \((j,0,h)\) and \((j+1,0,h)\) with an upper semicircle in the \(xz\)-plane.
Then connect the points \((j,0,1)\) and \((j,0,2b)\) with a larger upper semicircle in the \(xy\)-plane.
See Figure \ref{fig:plat_link_tops}.
Thus we obtain a link from \(B\).

\begin{figure}[ht]
\centering
\labellist \small\hair 2pt
\pinlabel {\(0\)} [b] at -20 -20 
\pinlabel {\(1\)} [b] at -20 85
\pinlabel {\(2\)} [b] at -20 170
\pinlabel {\(3\)} [b] at -20 255
\pinlabel {\(4\)} [b] at -20 340
\pinlabel {\(z\)} [b] at -5 385 
\pinlabel {\(1\)} [b] at 47.75 -20 
\pinlabel {\(2\)} [b] at 95.5 -20
\pinlabel {\(3\)} [b] at 143.25 -20
\pinlabel {\(4\)} [b] at 191 -20
\pinlabel {\(5\)} [b] at 238.75 -20
\pinlabel {\(6\)} [b] at 286.5 -20
\pinlabel {\(7\)} [b] at 334.25 -20
\pinlabel {\(8\)} [b] at 382 -20
\pinlabel {\(x\)} [b] at 410 -10 
\pinlabel {\(U_2^1\)} [b] at 117.5 77 
\pinlabel {\(U_2^2\)} [b] at 213 77
\pinlabel {\(U_2^3\)} [b] at 308.5 77
\pinlabel {\(U_3^1\)} [b] at 69.75 163
\pinlabel {\(U_3^2\)} [b] at 165.25 163
\pinlabel {\(U_3^3\)} [b] at 260.75 163
\pinlabel {\(U_3^4\)} [b] at 356.25 163
\pinlabel {\(U_4^1\)} [b] at 117.5 248
\pinlabel {\(U_4^2\)} [b] at 213 248
\pinlabel {\(U_4^3\)} [b] at 308.5 248
\endlabellist
\includegraphics[scale=.7]{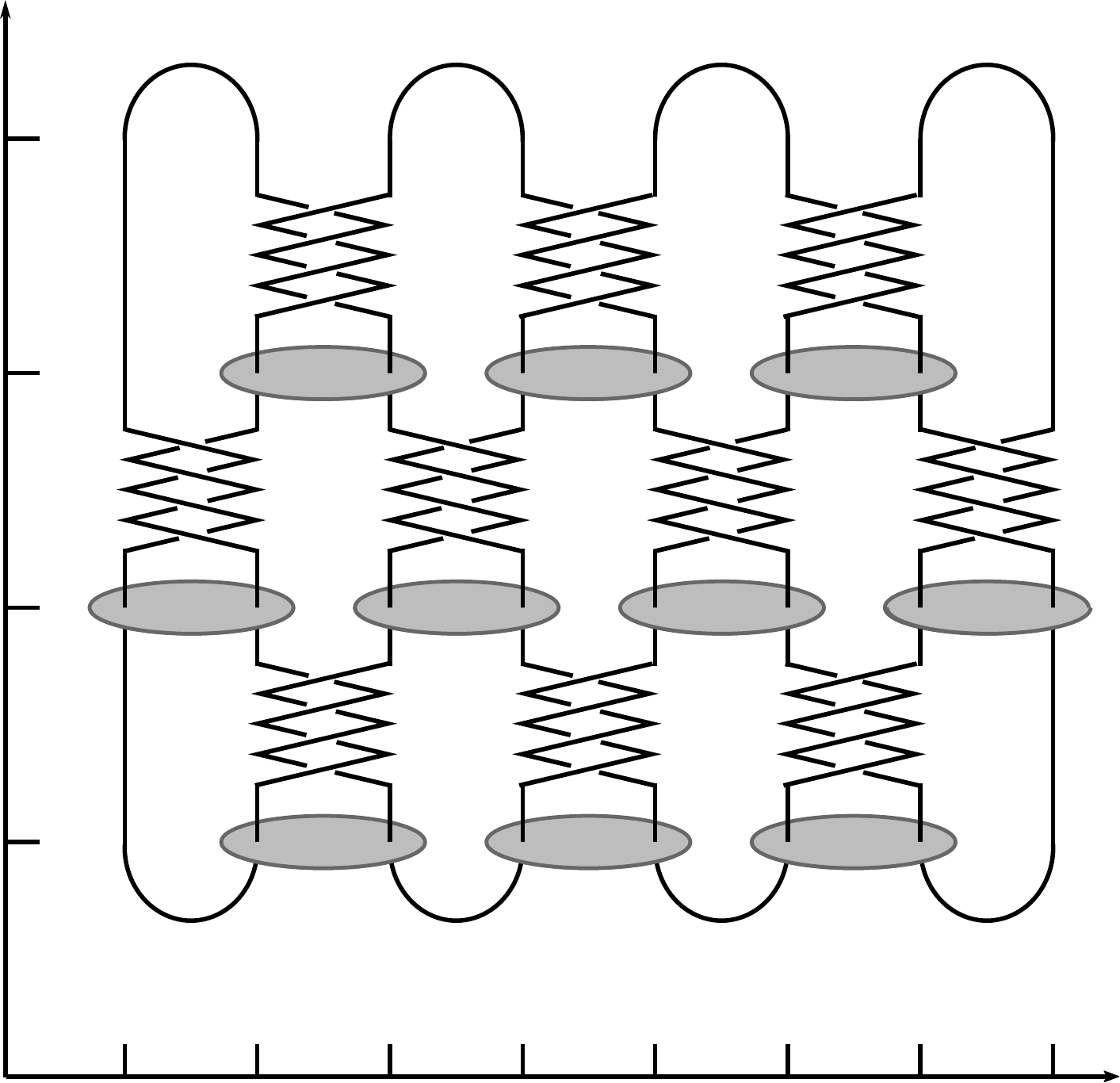}
\vspace{10pt}
\caption{A 4-twisted  \((4,4)\)-plat link with \(t_1^j=t_3^j=4\) for all \(j\) and \(t_2^j=-4\) for all \(j\). Also pictured are the \(U\)-disks defined in Section \protect\ref{sec:setting}.}
\label{fig:plat_link_with_U_disks}
\end{figure}

Any link constructed in this way from a braid in \(({h,b})\)-plat position will be said to be \textbf{a link in \(({h,b})\)-plat position}.
Observe that such a link is in a bridge position with bridge number \(b\).
For \(n\in\N\), a link in  \(({h,b})\)-plat position is called \textbf{\(n\)-twisted} if \(|t_i^j|\geq n\) for all \(i,j\).
See Figure \ref{fig:plat_link_with_U_disks} for an example of a \(4\)-twisted link in \({(4,4)}\)-plat position.

\section{Setting}\label{sec:setting}

\begin{figure}[ht]
\centering
\labellist \small\hair 2pt
\pinlabel {\(-x\)} [b] at 30 81
\pinlabel {\(+x\)} [b] at 440 81
\pinlabel {\(-y\)} at 1 1
\pinlabel {\(-z\)} at 196 35
\pinlabel {\(+z\)} at 196 320
\endlabellist
\includegraphics[scale=.35]{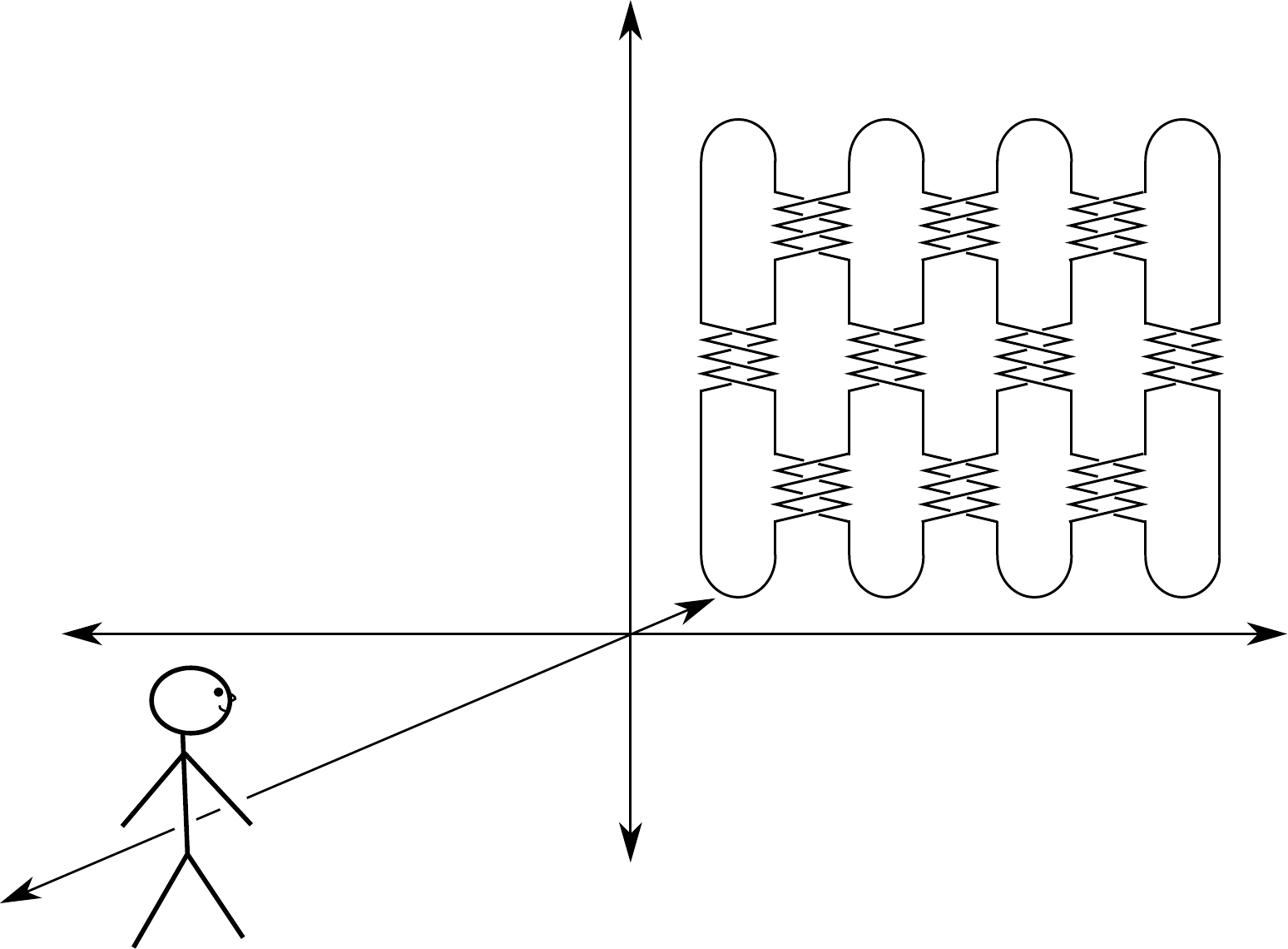}
\caption{Point of view}
\label{fig:point_of_view}
\end{figure}

Let \(f:S^3\rightarrow {[{-\infty},{\infty}]}\) be a height function on \(S^3\), and let \(\Lambda\) be a strictly increasing arc in \(S^3\) with endpoints being the two critical points of \(S^3\).
Then \(S^3\backslash\Lambda\) is homeomorphic to \(\R^3\).
Let \(\phi:\R^3\rightarrow S^3\backslash\Lambda\) be a homeomorphism that respects the height function \(f\) on \(S^3\) and the standard height function on \(\R^3\).
That is, if \(P_i\) is the level plane in \(\R^3\) of height \(i\), and \(F_i\) is the level sphere in \(S^3\) of height \(i\), then \(\phi(P_i)\) is \(F_i\) minus the point \(F_i\cap\Lambda\).
Throughout the rest of the paper, we adopt the point of view of someone standing on the \(-y\) side of the \(xz\)-plane.
This gives meaning to words like ``left," ``right", ``up," ``down," ``horizontal," and ``vertical."
(See Figure \ref{fig:point_of_view}.)

\textbf{The Link \(L\):}
Let \(L\) be a \(2\)-twisted link in \((4,4)\)-plat position with bridge sphere \(F\simeq F_1=f^{-1}(1)\subset S^3\).
Suppose \(L\) has twist numbers \(\left\{t_i^j\right\}\) such that for all \(j\), the twist numbers \(t_2^j,t_4^j\) are positive, and \(t_3^j\) are negative.
We will work with this link \(L\) for the rest of the paper.

Consider the diagram \(D(L)\) for \(L\) obtained by projection to the \((x,z)\)-plane.
Our choice of signs for the twist numbers \(\left\{t_i^j\right\}\) makes \(D(L)\) an alternating diagram, and we know that \(L\) is a split link if and only if \(D(L)\) is a split diagram (see Theorem 4.2 of \cite{Lickorish}).
Since \(D(L)\) is clearly a non-split diagram, it follows that \(L\) is a non-split link.

\begin{figure}[h!]
\vspace{10pt}
\centering
\labellist \small\hair 2pt
\pinlabel {\(F_4\)} [b] at 38 2 
\pinlabel {\(\beta^1\)} [b] at 120 20 
\pinlabel {\(D^1\)} [b] at 120 45
\pinlabel {\(\alpha^1\)} [b] at 120 67
\pinlabel {\(\gamma^1\)} [b] at 160 20 
\pinlabel {\(\beta^2\)} [b] at 200 20
\pinlabel {\(D^2\)} [b] at 200 45
\pinlabel {\(\alpha^2\)} [b] at 200 67
\pinlabel {\(\gamma^2\)} [b] at 240 20
\pinlabel {\(\beta^3\)} [b] at 280 20
\pinlabel {\(D^3\)} [b] at 280 45
\pinlabel {\(\alpha^3\)} [b] at 280 67
\pinlabel {\(\gamma^3\)} [b] at 323 20
\pinlabel {\(\beta^4\)} [b] at 361 20
\pinlabel {\(D^4\)} [b] at 361 45
\pinlabel {\(\alpha^4\)} [b] at 361 67
\endlabellist
\includegraphics[width=\textwidth]{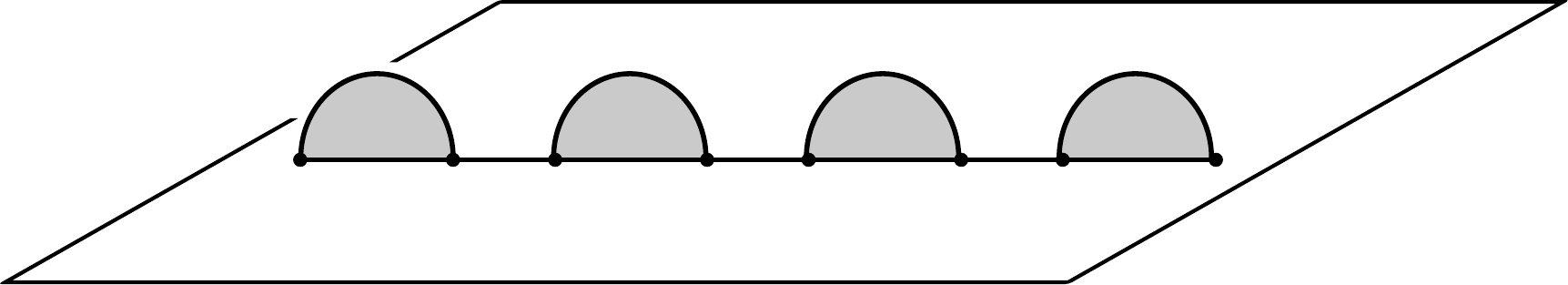}
\caption{The bridge disks \(D^k\) above \(F\)}
\label{fig:bridges}
\end{figure}

\textbf{The Bridge Disks:}
 Refer to Figure \ref{fig:bridges}.
Above \(F_4\) in the \((x,z)\)-plane lie the \(4\) upper bridge arcs.
We will name these arcs consecutively from left to right, \(\alpha^1,\alpha^2,\alpha^3,\alpha^4\).
Vertical projection of the bridge arcs into \(F\) gives us \(4\) straight line segments at level \(4\) which we will name consecutively from left to right, \(\beta^1\), \(\beta^2\), \(\beta^3\), and \(\beta^4\).
We will refer to these as \textbf{\(\beta\)-arcs}.
Observe that \(\alpha^j\) and \(\beta^j\) form a loop.
Let \(D^j\) be the disk in the \((x,z)\)-plane bounded by this loop.
Notice each \(D^j\) is a bridge disk.
Define \(\gamma^j\) to be the straight line segment connecting the points \({({2j},0,4)}\) and \({({2j+1},0,4)}\).
We will refer to these arcs as \textbf{\(\gamma\)-arcs}.

\begin{figure}[h!]
\vspace{10pt}
\centering
\labellist \small\hair 2pt
\pinlabel {\(\beta'\)} [b] at 1358 160 
\pinlabel {\({D^4}'\)} at 1365 100 
\pinlabel {\({\alpha^4}'\)} at 1520 60 
\endlabellist
\includegraphics[scale=.13]{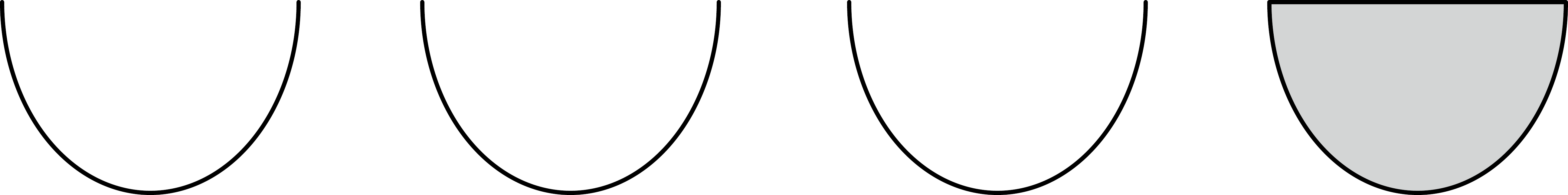}
\caption{The bridge arcs below \(F\).
Of these, we care especially about the rightmost bridge arc \({\alpha^4}'\), the straight line segment \(\beta'\) between its endpoints, and the bridge disk \({D^4}'\) cobounded by \({\alpha^4}'\) and \(\beta'\).}
\label{fig:alpha_4_prime}
\end{figure}

Below \(F_1\) in the \((x,z)\)-plane lie the \(4\) lower bridge arcs, which we will call consecutively from left to right, \({\alpha^1}'\), \({\alpha^2}'\), \({\alpha^3}'\), and \({\alpha^4}'\).
We will also define \(\beta'\) to be the straight line segment between the endpoints of \({\alpha^4}'\).
(See Figure \ref{fig:alpha_4_prime}.)
\(\beta'\cup{\alpha^4}'\) is a loop which bounds a bridge disk in the \((x,z)\)-plane which we will call \({D^4}'\).

%

\textbf{\(l\)-loops and \(U\)-disks}: For \(i=1,3\), define \(l_i^j\) to be the circle of radius \(\frac{3}{4}\) in the horizontal plane \(z=i-1\) centered at the point \({(2j+\frac{1}{2},0,i-1)}\). For \(i=2\), define \(l_2^j\) to be the circle of radius \(\frac{3}{4}\) in the horizontal plane \(z=2\) centered at the point \({(2j-\frac{1}{2},0,i-1)}\). Then each \(l_i^j\) bounds a twice-punctured disk which we call \(U_i^j\). Each disk \(U_i^j\) lies below a corresponding twist region \(\Tw_i^j\). See Figure \ref{fig:plat_link_with_U_disks}.

\textbf{\(\sigma\)-projection}: It will be convenient to be able to talk about simple vertical projections.  If \(P_i\) denotes the horizontal plane in \(\R^3\) of height \(i\), let \(\sigma_i:\R^3\rightarrow P_i\) be the vertical projection map defined by \((x,y,z)\mapsto(x,y,i)\).

\textbf{\(\tau\)-projection}: We will also need a more subtle type of projection which respects \(L\).
We closely follow Johnson and Moriah \cite{Jesse}:
Note that \(L\) intersects each \(F_i\) in the same number of points and these points vary continuously as \(i\) varies from \(1\) to \(h\).
This can be thought of as an isotopy of these \(2b\) marked points in \(F\) which extends to an ambient isotopy of \(F_h\).
To be precise, there is a projection map \(\tau_i:F\times [1,h]\rightarrow F_i\) for each \(i\in[1,h]\) that sends each arc component of the plat braid to a point \((j,0,i)\) for some \(j=1,2,\cdots,2b\) and defines a homeomorphism \(F_{i'}\rightarrow F_i\) for each \(i'\in[1,h]\).
These homeomorphisms are canonical up to isotopy fixing the points \(L\cap F_i\), and the induced homeomorphism \(F_i\rightarrow F_i\) is the identity.
Further, for each \(i=2,3,4\), the homeomorphism induced by \(\tau_i:F_{i-1}\rightarrow F_i\) is a composition of half twists about the \(l\)-loops, which can be expressed as \(\Pi_j H_{l_i^j}^{t_i^j}\).
(Note that since the \(U\)-disks at any given level are pairwise disjoint, the corresponding half twists all commute with each other.)
Therefore the homeomorphism \(f:F_{0}\rightarrow F_4\) induced by \(\tau_4\) can be expressed as \(f=\left(\prod_j H_{l_4^j}^{t_4^j}\right)\left(\prod_j H_{l_3^j}^{t_3^j}\right)\left(\prod_j H_{l_2^j}^{t_2^j}\right)\).

\begin{figure}[ht]
\centering
\labellist \small\hair 2pt
\pinlabel {\(B\)} at 45 340
\pinlabel {\(B'\)} [r] at 345 -10
\pinlabel {\(\tau_2(\partial B')\)} at 420 127
\pinlabel {\(\tau_3(\partial B')\)} at 420 213
\pinlabel {\(\tau_4(\partial B')\)} at 420 299
\endlabellist
\includegraphics[scale=.5]{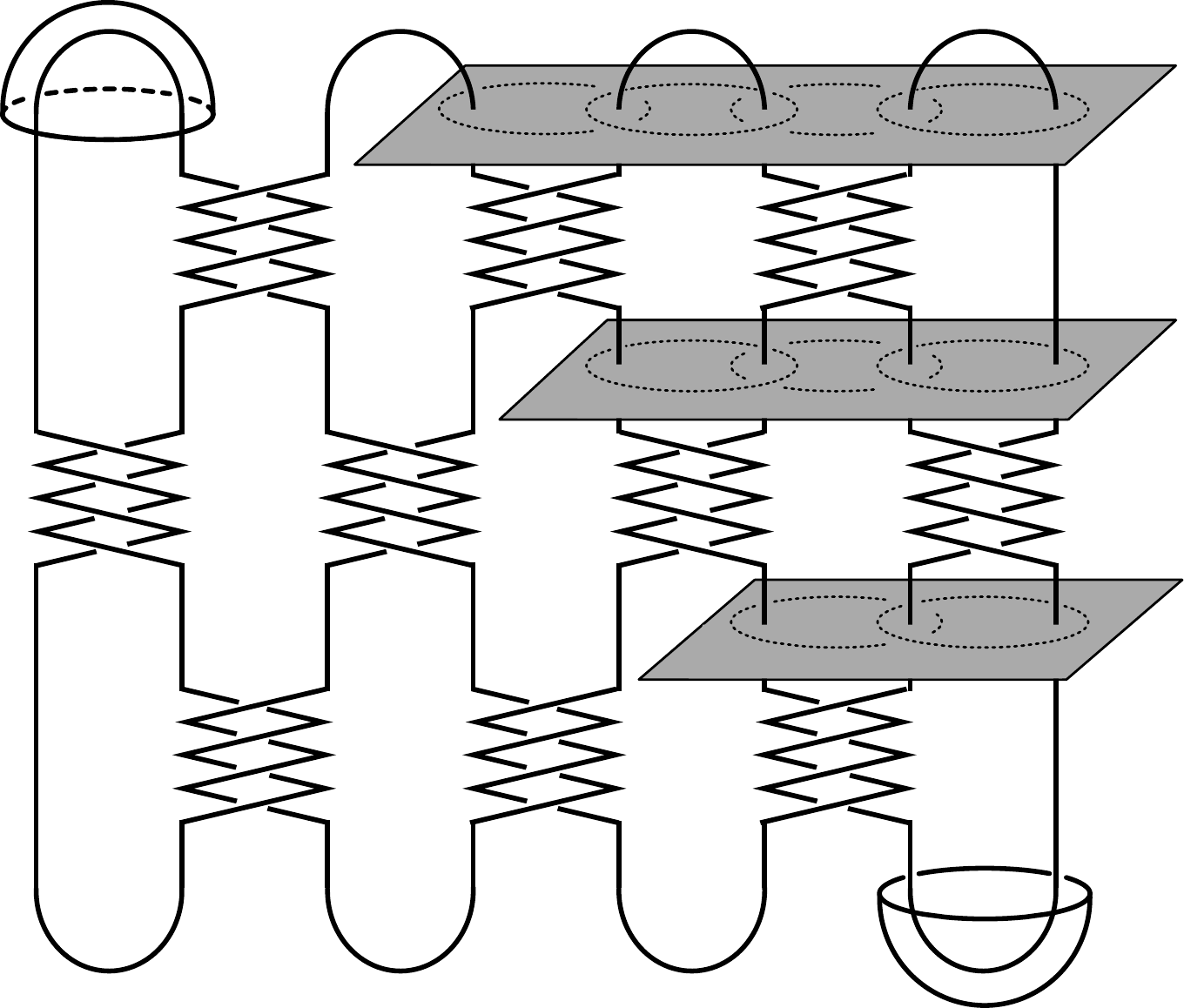}
\vspace{10pt}
\caption{The two blue compressing disks, \(B\) and \(B'\)}
\label{fig:B_and_Bprime_disjoint}
\end{figure}

\textbf{Disk partition}: Our goal is to show that \(F\) is critical, so we need to exhibit a partition \(\mathcal{C}_1\sqcup\mathcal{C}_2\) of the isotopy classes of the compressing disks for \(F\).
(Recall the definition of \textit{critical} from the beginning of Section \ref{sec:defs}.)
Above \(F\), let \(B\) be the frontier of a regular neighborhood of \(D^1\).
Below \(F\), let \(B'\) be the frontier of a regular neighborhood of \({D^4}'\).
\(B\) and \(B'\)  are depicted in Figure \ref{fig:B_and_Bprime_disjoint}.
Let \([B],[B']\in\mathcal{C}_2\), and let all of the other isotopy classes of compressing disks be in \(\mathcal{C}_1\).
It will be convenient to refer to a compressing disk \(C\) as \textbf{red} if \([C]\in\mathcal{C}_1\) and \textbf{blue} if \([C]\in\mathcal{C}_2\).


\section{The Labyrinth}\label{sec:labyrinth}


\begin{figure}[ht]
\centering
\labellist \small\hair 2pt
\pinlabel {\(G^\text{b}\)} at 325 75
\pinlabel {\(G^\text{o}\)} at 495 405
\pinlabel {\(G^\text{p}\)} at 655 75
\pinlabel {\(\tau_4(\partial B')\)} at 710 360
\pinlabel {Lab} at 600 440
\pinlabel {\(\beta^2\)} at 140 255
\pinlabel {\begin{rotate}{60}{brown}\end{rotate}} at 170 250
\pinlabel {\(\gamma^2\)} at 230 255
\pinlabel {\(\beta^3\)} at 360 255
\pinlabel {\begin{rotate}{60}{orange}\end{rotate}} at 330 250
\pinlabel {\begin{rotate}{60}{purple}\end{rotate}} at 495 250
\pinlabel {\(\gamma^3\)} at 610 255
\pinlabel {\(\beta^4\)} at 740 255
\endlabellist
\includegraphics[width=\textwidth]{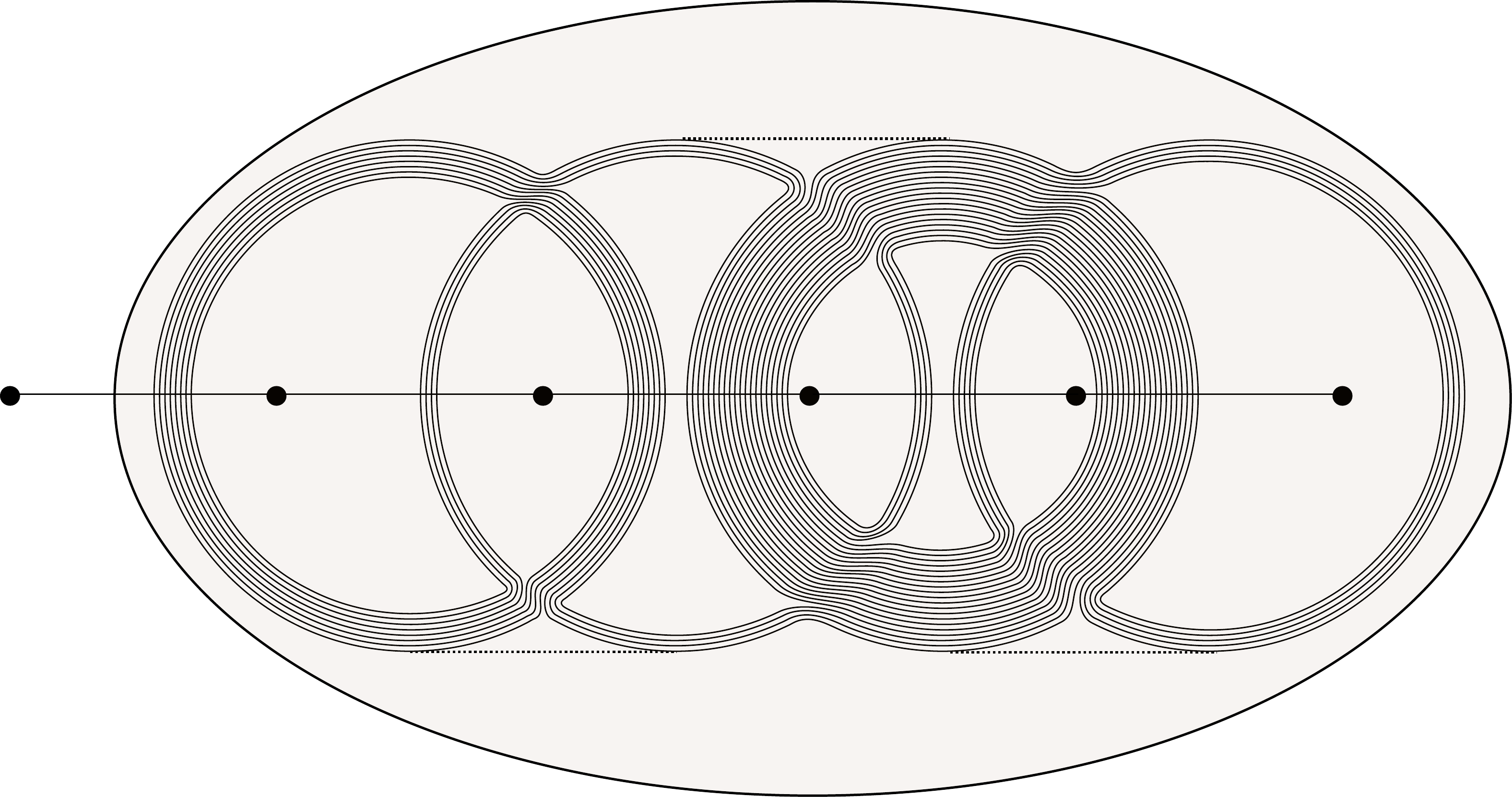}
\caption{The thin spiraling line is \(\tau_4(\partial B')\) in the case when \(t_1^j=t_3^j=2\) and \(t_2^j=-2\) for all \(j\).
Also depicted is the disk Lab which contains \(\tau_4(\partial B')\), and the three gates, \(G^\text{b}\), \(G^\text{o}\), and \(G^\text{p}\), as well as the three colored points.
In general, the five marked points in Lab may be permuted differently when different twist numbers are chosen.}
\label{fig:boundary_of_Bprime_two_twists_revised_3}
\end{figure}

We are interested in the image of \(\partial B'\) under \(f\), the homeomorphism induced by \(\tau_4\).
As \(\tau_4\) takes \(\partial B'\) up from level 1 to level 4, \(\partial B'\) undergoes a series of twists, following the strands of the link.
As explained in Section \ref{sec:setting}, \(f\) can be expressed as a product of half twists around the \(l\)-loops.
Explicitly, \(f=\left(\prod_j H_{l_4^j}^{t_4^j}\right)\left(\prod_j H_{l_3^j}^{t_3^j}\right)\left(\prod_j H_{l_2^j}^{t_2^j}\right)\).
In Figure \ref{fig:boundary_of_Bprime_two_twists_revised_3} we show exactly what \(\partial B'\) looks like under \(f\) in a case where \(|t_i^j|=2\) for all \(j\).

\begin{figure}[h!]
\centering
\labellist \small\hair 2pt
\pinlabel {\(H_{l_i^j}\)} at 310 214
\pinlabel {\(U_i^j\)} at 60 180
\pinlabel {\(\lambda\)} at 125 200
\endlabellist
\includegraphics[width=\textwidth]{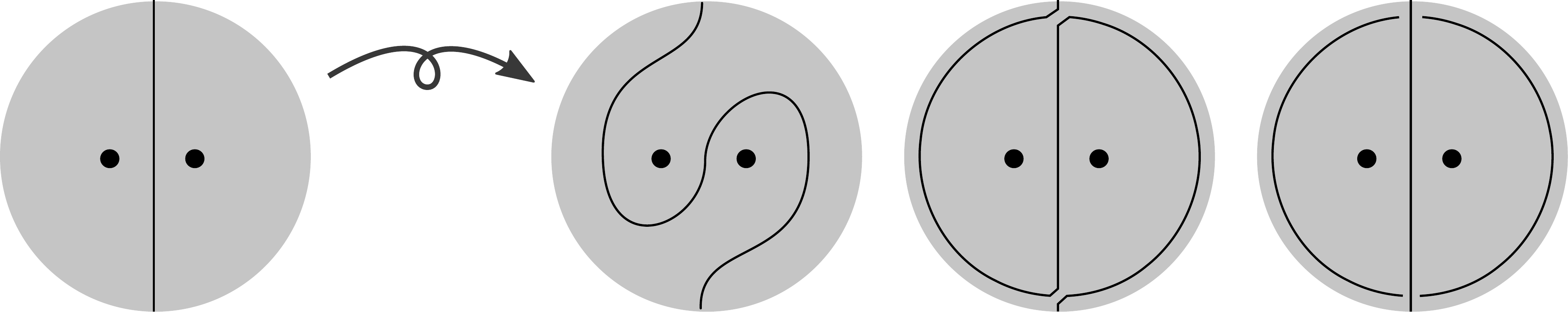}
\caption{How to view a half twist about an \(l\)-loop as a link diagram}
\label{fig:half_twist_surgery_one_arc2}
\end{figure}

\begin{figure}[h!]
\centering
\includegraphics[scale=.2]{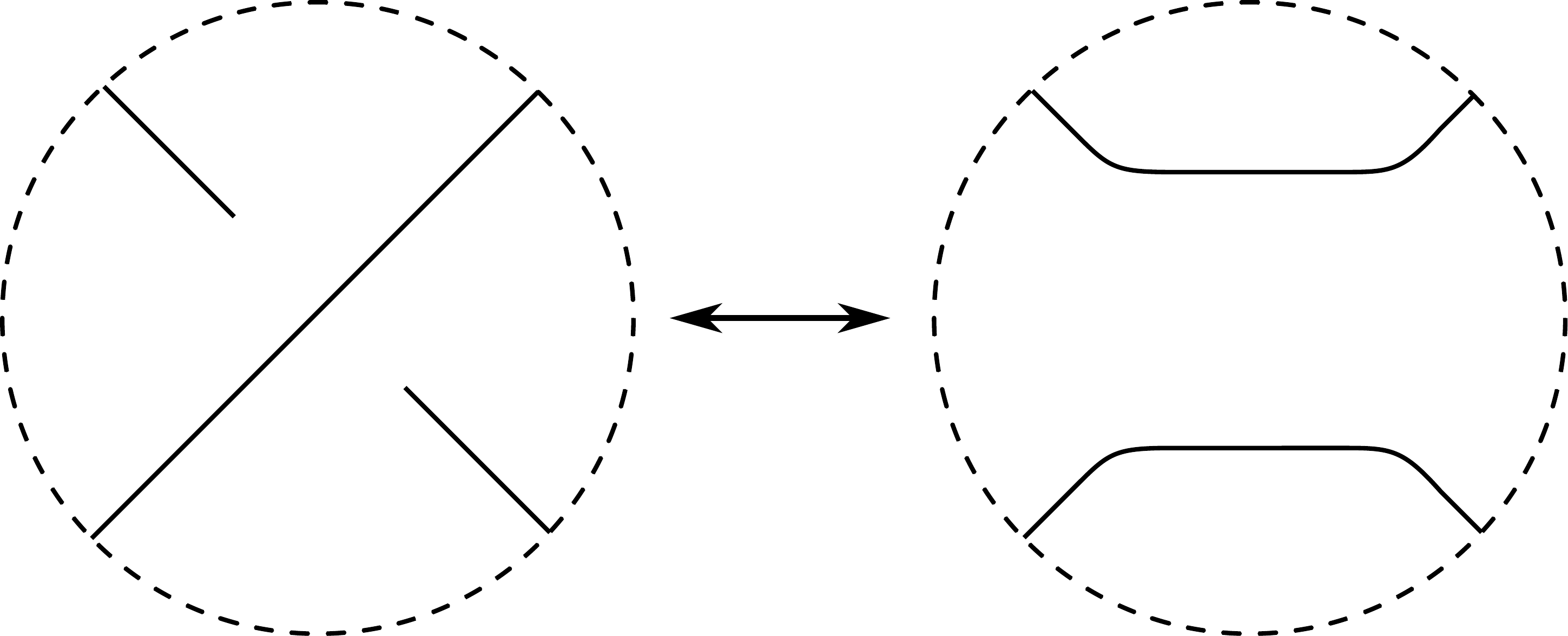}
\caption{Rule for smoothing a crossing}
\label{fig:surgery_smoothing_crossing2}
\end{figure}

Of course, we want to understand what \(\tau_4(\partial B')\) will look like in general for any set of twist numbers \(\left\{t_i^j\right\}\).
To do so, we will devise a convenient pictorial notation for loops if \(F\) which undergo half twists about \(l\)-loops.
Suppose \(\lambda\) is a loop in \(F\) with one arc of intersection with \(U_i^j\) which passes between the two marked points, as in the first picture in Figure \ref{fig:half_twist_surgery_one_arc2}.
The second and third pictures in Figure \ref{fig:half_twist_surgery_one_arc2} are homotopic pictures of \(H_{l_i^j}(\lambda)\).
Observe that the third picture is reminiscent of knot diagram smoothing.
In fact, using the smoothing rule in Figure \ref{fig:surgery_smoothing_crossing2}, we can ``unsmooth" \(H_{l_i^j}(\lambda)\) into the fourth picture, which consists of a link diagram on \(F\) with two components: 1) the original \(\lambda\) and 2) \(l_i^j\).
This suggests a convenient algorithm to draw any such loop \(\lambda\) after a (positive) half twist about an \(l\)-loop:
We start by drawing \(\lambda\).
Next we draw the \(l\)-loop so that it passes \textit{under} \(\lambda\).
Last we smooth the crossings using the rule in Figure \ref{fig:surgery_smoothing_crossing2}.
If we want to perform a negative half twist instead, the procedure is the same, except that we draw the \(l\)-loop crossing \textit{over} \(\lambda\).

\begin{figure}[ht]
\centering
\labellist \small\hair 2pt
\pinlabel {\(H_{l_i^j}\)} at 310 465
\pinlabel {\(U_i^j\)} at 50 420
\pinlabel {\(\lambda\)} at 140 440
\pinlabel {
	\setlength{\fboxsep}{2pt}
	\fbox{\(n\)}
	} at 125 200
\pinlabel {
	\setlength{\fboxsep}{2pt}
	\fbox{\(n\)}
	} at 521 199
\pinlabel {
	\setlength{\fboxsep}{2pt}
	\fbox{\(n\)}
	} at 782 190
\pinlabel {
	\setlength{\fboxsep}{2pt}
	\fbox{\(n\)}
	} at 1038 190
\pinlabel {
	\setlength{\fboxsep}{2pt}
	\fbox{\(n\)}
	} at 1141 112
\endlabellist
\includegraphics[width=\textwidth]{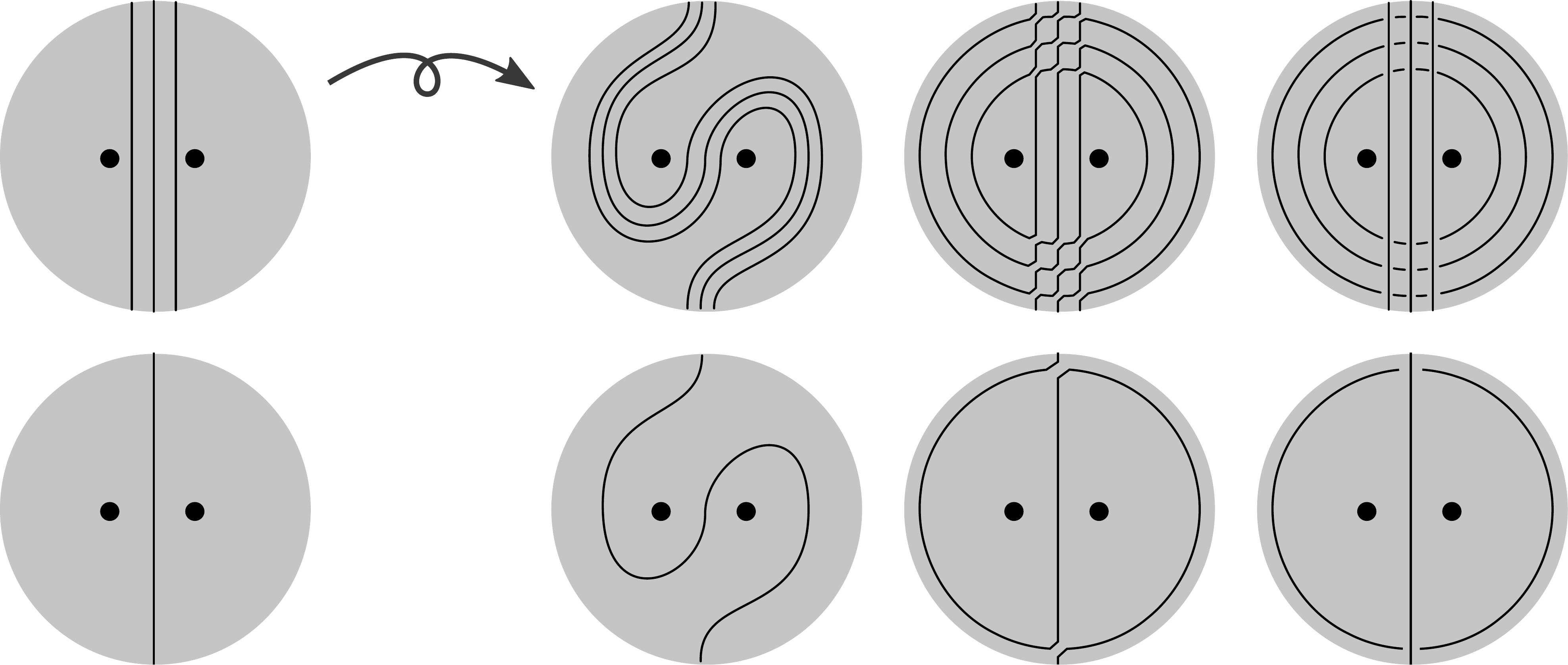}
\caption{How to view a half twist about an \(l\)-loop as a link diagram.
In the top row of pictures, we show specifically what the situation looks like when \(n=3\).
The bottom row shows that the whole process can be done with an arbitrary number \(n\) of strands through \(U_i^j\).
In all the following figures that depict half twists, when we label the number of parallel strands present, as in the bottom row here, we will box the labels with squares.}
\label{fig:half_twist_general2}
\end{figure}

We can generalize this process to the situation where there are \(n\) strands of \(\lambda\) passing through \(U_i^j\).
In Figure \ref{fig:half_twist_general2}, the top row is an example in which \(n=3\), and the bottom row depicts the general case.
As in Figure \ref{fig:half_twist_surgery_one_arc2}, the second and third pictures of each row in  Figure \ref{fig:half_twist_general2} are homotopic pictures of \(H_{l_i^j}(\lambda)\), and we can ``unsmooth" \(H_{l_i^j}(\lambda)\) into the fourth picture, which consists of a link diagram on \(F\).
This time, the link diagram contains \(n+1\) components: \(\lambda\) and \(n\) parallel copies of \(l_i^j\).
Thus our generalized procedure for drawing a half twist of \(\lambda\) about \(l_i^j\) is the following: 
First determine the number \(n\) of times that \(\lambda\) passes between the two punctures of \(U_i^j\), then we draw \(n\) parallel, disjoint copies of \(l_i^j\) so that at each crossing, \(l_i^j\) passes \textit{under} \(\lambda\) for a positive half twist, or \textit{over} \(\lambda\) for a negative half twist, and then we smooth the crossings using the rule from Figure \ref{fig:surgery_smoothing_crossing2}.

Note that to perform \(|t_i^j|\) consecutive half twists around \(l_i^j\), we simply perform this process \(|t_i^j|\) times, taking care that every time we add new parallel copies of \(l_i^j\), they are nested inside the ones previously drawn.
Thus if \(\lambda\) is half-twisted around \(l_i^j\) a total of \(|t_i^j|\) times, and \(\lambda\) passes between the punctures of \(U_i^j\) a total of \(n\) times, then to build the link diagram which depicts \(H_{l_i^j}^{t_i^j}(\lambda)\), we will add a total of \(n|t_i^j|\) parallel copies of \(l_i^j\) (all passing under \(\lambda\) if \(t_i^j>0\) or over \(\lambda\) if \(t_i^j<0\)).
Thus after any sequence of half twists about \(l\)-loops, we obtain a link diagram representing \(\lambda\).
Instead of drawing all of the parallel copies of each \(l\)-loop, we will draw only one of each and label it with the number {\setlength{\fboxsep}{2pt}\fbox{\(n\)}} of strands it represents.

\begin{figure}[h!]
\centering
\labellist \small\hair 2pt
\pinlabel {
	\setlength{\fboxsep}{2pt}
	\fbox{7}
	} at 20 60
\pinlabel {
	\setlength{\fboxsep}{2pt}
	\fbox{3}
	} at 20 30
\pinlabel {
	\setlength{\fboxsep}{2pt}
	\fbox{7}
	} at 20 142
\pinlabel {
	\setlength{\fboxsep}{2pt}
	\fbox{3}
	} at 20 172
\pinlabel {=} at 101 44
\pinlabel {=} at 101 156
\pinlabel {=} at 213 44
\pinlabel {=} at 213 156
\pinlabel {=} at 325 44
\pinlabel {=} at 325 156
\endlabellist
\includegraphics[width=\textwidth]{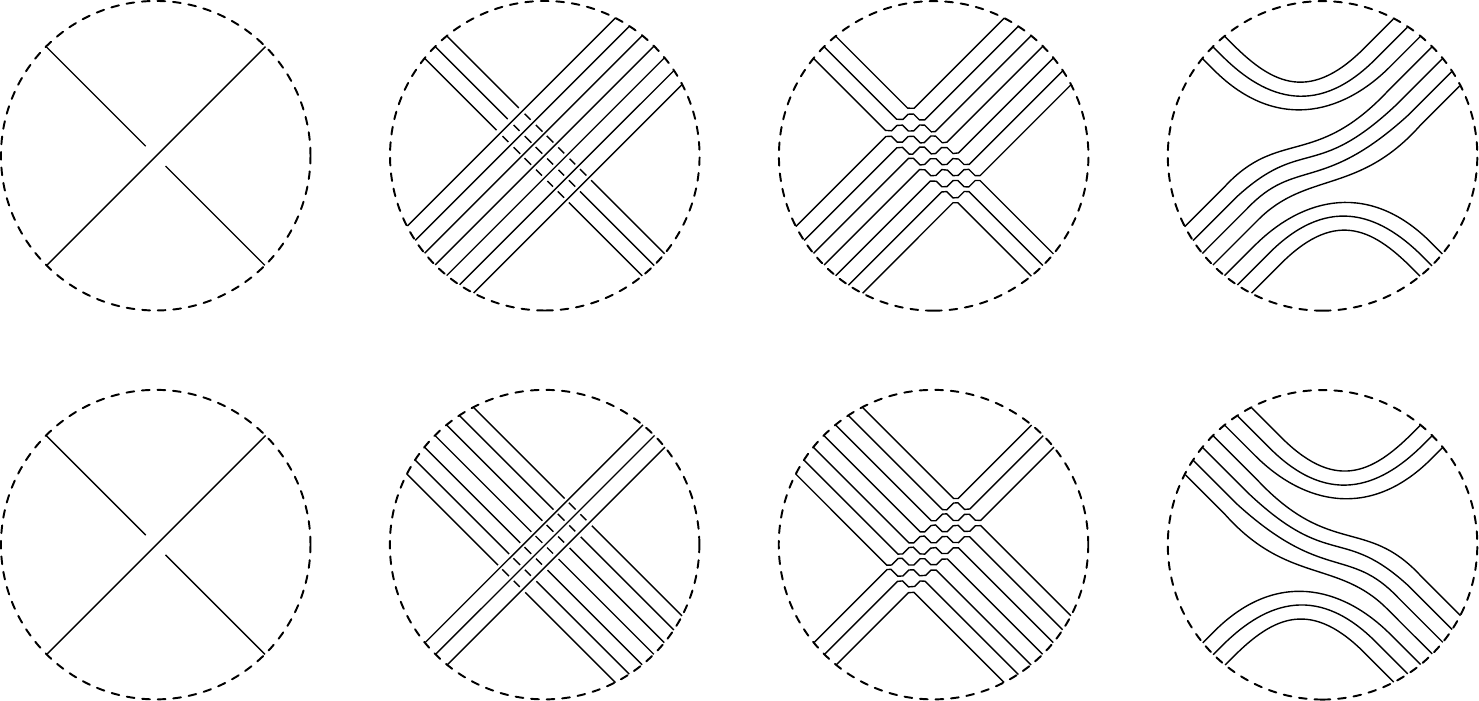}
\caption{After twisting a loop \(\lambda\) about \(l\)-loops, we obtain a link diagram on \(F\).
Here is an example of a crossing in such a diagram: In this crossing, one strand represents 3 parallel strands, and the other represents 7 parallel strands.
Consider the top row of pictures, where the over-strand has a larger number.
The first picture is a neighborhood of the crossing of the link diagram we have obtained.
The second picture explicitly shows all of the parallel strands at the crossing.
The third picture shows the result after smoothing all 21 crossings.
The fourth picture is a smoothed version of the third.
All four pictures represent the same thing.
The bottom row is similar, but this time the under-strand has the larger number.}
\label{fig:multi_strand_crossing}
\end{figure}

\begin{figure}[h!]
\centering
\labellist \small\hair 2pt
\pinlabel {
	\setlength{\fboxsep}{2pt}
	\fbox{\(N\)}
	} at 20 56
\pinlabel {
	\setlength{\fboxsep}{2pt}
	\fbox{\(n\)}
	} at 20 30.5
\pinlabel {
	\setlength{\fboxsep}{2pt}
	\fbox{\(N\)}
	} at 20 134
\pinlabel {
	\setlength{\fboxsep}{2pt}
	\fbox{\(n\)}
	} at 20 160
\pinlabel {=} at 101 44
\pinlabel {=} at 101 145
\pinlabel {
\begin{rotate}{45}
\(
\left\{
\begin{tabular}{c}
• \\ 
• \\ 
\end{tabular}
\right.
\)
\end{rotate}
} at 122 105
\pinlabel {\(N\)} at 114 103
\pinlabel {
\begin{rotate}{225}
\(
\left\{
\begin{tabular}{c}
• \\ 
• \\ 
\end{tabular}
\right.
\)
\end{rotate}
} at 194 183
\pinlabel {\(N\)} at 200 185
\pinlabel {
\begin{rotate}{135}
\(
\left\{
\begin{tabular}{c}
• \\ 
• \\ 
\end{tabular}
\right.
\)
\end{rotate}
} at 197 9
\pinlabel {\(N\)} at 200 5
\pinlabel {
\begin{rotate}{-45}
\(
\left\{
\begin{tabular}{c}
• \\ 
• \\ 
\end{tabular}
\right.
\)
\end{rotate}
} at 121 80
\pinlabel {\(N\)} at 118 85
\pinlabel {
\begin{rotate}{-45}
\(
\left\{
\begin{tabular}{c}
• \\ 
\end{tabular}
\right.
\)
\end{rotate}
} at 120 181
\pinlabel {\(n\)} at 113 187
\pinlabel {
\begin{rotate}{135}
\(
\left\{
\begin{tabular}{c}
• \\ 
\end{tabular}
\right.
\)
\end{rotate}
} at 196 110
\pinlabel {\(n\)} at 197 104
\pinlabel {
\begin{rotate}{225}
\(
\left\{
\begin{tabular}{c}
• \\ 
\end{tabular}
\right.
\)
\end{rotate}
} at 195 82
\pinlabel {\(n\)} at 199 82
\pinlabel {
\begin{rotate}{45}
\(
\left\{
\begin{tabular}{c}
• \\ 
\end{tabular}
\right.
\)
\end{rotate}
} at 123 7
\pinlabel {\(n\)} at 118 5
\endlabellist
\includegraphics[scale=.9]{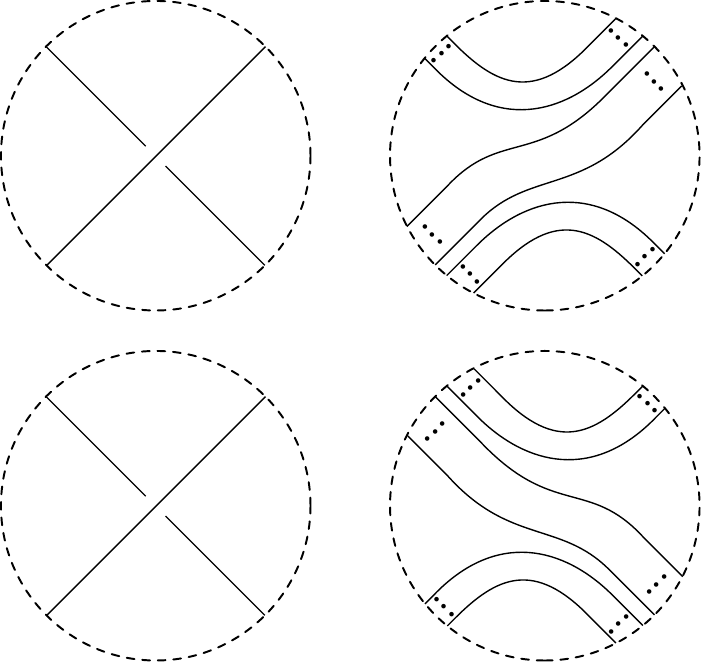}
\caption{Here we depict how to interpret a crossing in the link diagram of \(N\) strands with \(n\) strands, where \(N>n\).}
\label{fig:multi_strand_crossing_general}
\end{figure}

In the resulting link diagram, we will have crossings involving two strands which are labeled with different numbers.
Figure \ref{fig:multi_strand_crossing} depicts an example in which we show how to recover what \(\lambda\) actually looks like near such a crossing, where a strand marked with a {\setlength{\fboxsep}{2pt}\fbox{7}} crosses a strand marked with a {\setlength{\fboxsep}{2pt}\fbox{3}}.
Figure \ref{fig:multi_strand_crossing_general} shows the general case.

\begin{figure}[!ht]
\centering
\labellist \small\hair 2pt
\pinlabel {Level 1} at -100 1545                
\pinlabel {\(\partial B'\)} [l] at 1220 1545
\pinlabel {
	\setlength{\fboxsep}{2pt}
	\fbox{1}
	} at 1125 1646
\pinlabel {Level 2} at -100 1257              
\pinlabel {\(\tau_2(\partial B')\)} [l] at 1220 1257
\pinlabel {
	\setlength{\fboxsep}{2pt}
	\fbox{1}
	} at 1125 1359                     
\pinlabel {
	\setlength{\fboxsep}{2pt}
	\fbox{\(t_2^3\)}
	} at 925 1385
\pinlabel {Level 3} at -100 969                
\pinlabel {\(\tau_3(\partial B')\)} [l] at 1220 969
\pinlabel {
	\setlength{\fboxsep}{2pt}
	\fbox{1}
	} at 1125 1096            
\pinlabel {
	\setlength{\fboxsep}{2pt}
	\fbox{\(t_2^3\)}
	} at 915 1098
\pinlabel {
	\setlength{\fboxsep}{2pt}
	\fbox{\(t_2^3|t_3^4|\)}
	} at 1083 1018
\pinlabel {
	\setlength{\fboxsep}{2pt}
	\fbox{\(t_2^3|t_3^3|\)}
	} at 710 1102
\pinlabel {Level 3} at -100 681                
\pinlabel {(simplified)} at -100 631 
\pinlabel {\(\tau_3(\partial B')\)} [l] at 1220 681
\pinlabel {
	\setlength{\fboxsep}{2pt}
	\fbox{\(t_2^3\)}
	} at 915 812            
\pinlabel {
	\setlength{\fboxsep}{2pt}
	\fbox{\(1+t_2^3|t_3^4|\)}
	} at 1173 794
\pinlabel {
	\setlength{\fboxsep}{2pt}
	\fbox{\(t_2^3|t_3^3|\)}
	} at 710 813
\pinlabel {Level 4} at -100 393                
\pinlabel {\(\tau_4(\partial B')\)} [l] at 1220 393
\pinlabel {
	\setlength{\fboxsep}{2pt}
	\fbox{\(t_2^3\)}
	} at 941 535            
\pinlabel {
	\setlength{\fboxsep}{2pt}
	\fbox{\(t_4^3(1+t_2^3|t_3^3|+t_2^3|t_3^4|)\)}
	} at 346 556            
\pinlabel {
	\setlength{\fboxsep}{2pt}
	\fbox{\(1+t_2^3|t_3^4|\)}
	} at 1170 506
\pinlabel {
	\setlength{\fboxsep}{2pt}
	\fbox{\(t_2^3|t_3^3|\)}
	} at 710 525
\pinlabel {
	\setlength{\fboxsep}{2pt}
	\fbox{\(t_2^3|t_3^3|t_4^2\)}
	} at 560 525
\pinlabel {Level 4} at -100 105                
\pinlabel {(simplified)} at -100 55
\pinlabel {\(\tau_4(\partial B')\)} [l] at 1220 105
\pinlabel {
	\setlength{\fboxsep}{2pt}
	\fbox{\(t_2^3+t_4^3(1+t_2^3|t_3^3|+t_2^3|t_3^4|)\)}
	} at 282 266          
\pinlabel {
	\setlength{\fboxsep}{2pt}
	\fbox{\(1+t_2^3|t_3^4|\)}
	} at 1173 215
\pinlabel {
	\setlength{\fboxsep}{2pt}
	\fbox{\(t_2^3|t_3^3|\)}
	} at 710 237
\pinlabel {
	\setlength{\fboxsep}{2pt}
	\fbox{\(t_2^3|t_3^3|t_4^2\)}
	} at 560 237
\endlabellist
\includegraphics[scale=.29]{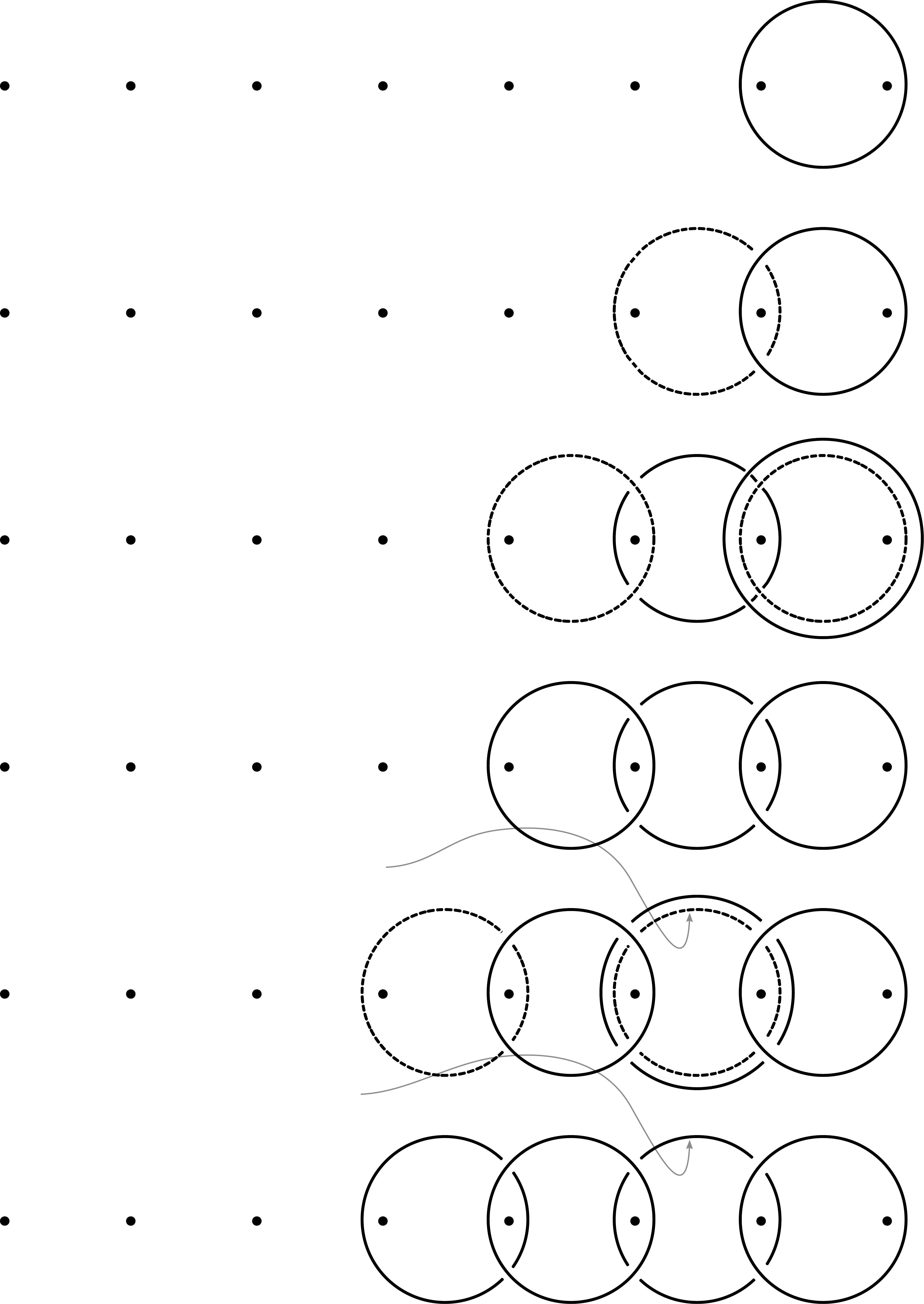}
\caption{The components of the link diagram that are newly added as a result of having just moved up a level are highlighted by being drawn with dotted lines.
The reader may find it helpful to compare this Figure to Figure \protect\ref{fig:B_and_Bprime_disjoint}, which shows the link diagram's location with respect to \(L\).}
\label{fig:obtaining_4_circles_with_crossing_info}
\end{figure}

\begin{figure}[h!]
\centering
\labellist \small\hair 2pt
\pinlabel {
	\setlength{\fboxsep}{2pt}
	\fbox{\(N^1\)}
	} at 520 330 
\pinlabel {
	\setlength{\fboxsep}{2pt}
	\fbox{\(N^2\)}
	} at 680 330 
\pinlabel {
	\setlength{\fboxsep}{2pt}
	\fbox{\(N^3\)}
	} at 935 330 
\pinlabel {
	\setlength{\fboxsep}{2pt}
	\fbox{\(N^4\)}
	} at 1095 330 
\pinlabel {\(G^\text{b}\)} at 648 90 
\pinlabel {\(G^\text{o}\)} at 809 333 
\pinlabel {\(G^\text{p}\)} at 969 90 
\pinlabel {\(\Lab\)} at 870 400
\pinlabel {\(\beta^1\)} at 90 220
\pinlabel {\(\gamma^1\)} at 251 220
\pinlabel {\(\beta^2\)} at 411 220
\pinlabel {\(\gamma^2\)} at 573 220
\pinlabel {\(\beta^3\)} at 733 220
\pinlabel {\(\gamma^3\)} at 893 220
\pinlabel {\(\beta^4\)} at 1054 220
\endlabellist
\includegraphics[width=\textwidth]{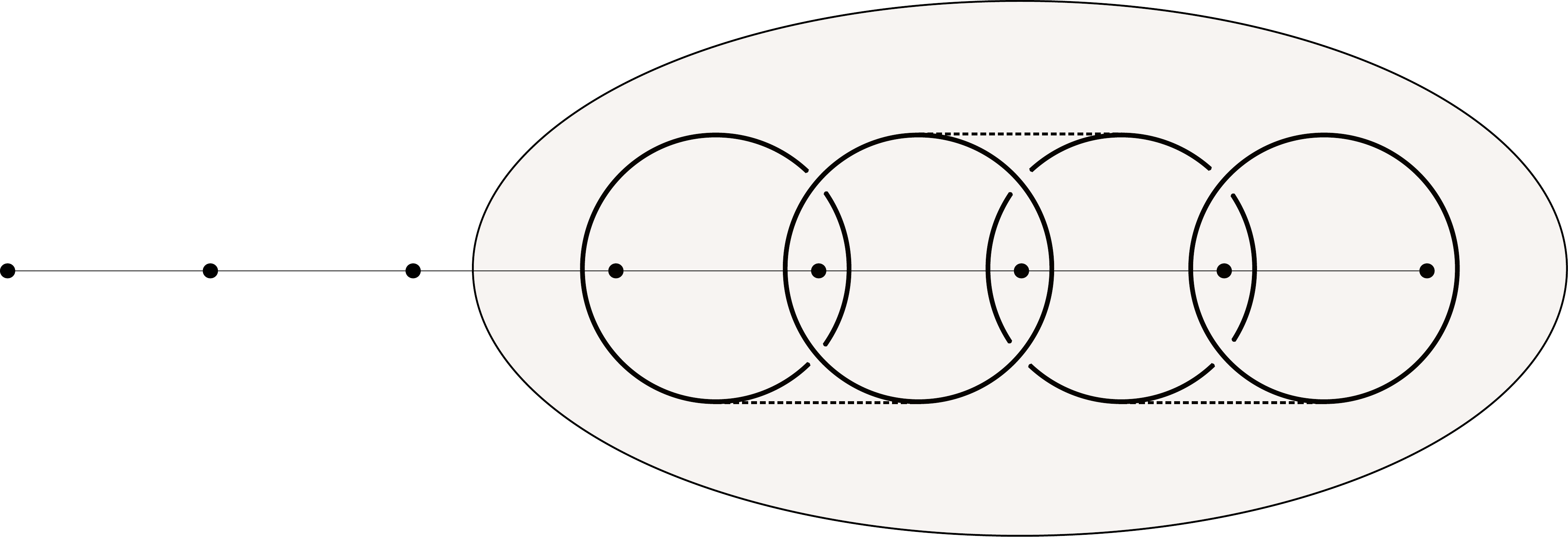}
\vspace{20pt}
\caption{This is the link diagram representing \(\tau_4(\partial B')\).
\(N^1=t_2^3|t_3^3|t_4^2\),  \(N^2=t_2^3|t_3^3|\),  \(N^3={t_2^3+t_4^3(1+t_2^3|t_3^3|+t_2^3|t_3^4|)}\), and  \(N^4=1+t_2^3|t_3^4|\).
The following inequalities hold: \(N^1>N^2\), \(N^2<N^3\), and \(N^3>N^4\).}
\label{fig:4_circles_with_crossing_info}
\end{figure}

Our purpose in developing this link diagramatic representation of a twisted loop is to understand what \(\partial B'\) looks like after performing (in order) half twists about all of the \(l\)-loops for any set of twist numbers \(\left\{t_i^j\right\}\).
With this method in hand, this is a straightforward task.
Refer to Figure \ref{fig:obtaining_4_circles_with_crossing_info} as we describe this process.
We start with the simple circle \(\partial B'\) in \(F\) at Level 1, and then push it up to Level 2 (i.e., we apply \(\tau_2\)).
\(\partial B'\) had to pass through the twist region \(\Tw_2^3\), so this means it underwent \(t_2^3\) half twists about \(l_2^3\).
Before twisting (i.e., at Level 1), \(\partial B'\) passed one time (i.e., \(n=1\)) between the punctures of \(U_2^3\); therefore what \(\partial B'\) looks like at Level 2 is a link diagram consisting of the original circle \(\partial B'\), plus \(1\times t_2^3\) parallel copies of \(l_i^j\).
Since we specified in Section \ref{sec:setting} that \(t_2^j\) is positive for all \(j\), each copy of \(l_i^j\) will be drawn to pass under \(\partial B'\).
Now the original \(\partial B'\) and the \(t_2^3\) copies of \(l_i^j\) together with the crossing information make a link diagram which represents the image of \(\partial B'\) under \(\tau_2\).
Since the value of \(t_2^3\) is arbitrary, we cannot smooth the crossings to see exactly what \(\tau_2(\partial B')\) looks like.
But that is not a problem;
we can still push this loop up from Level 2 to Level 3.
As we do, we add copies of \(l_3^3\) and \(l_3^4\) to the link diagram.
This time, since \(t_3^j<0\) for all \(j\), these \(l\)-loops will all be drawn with overcrossings, which gives us a link diagram representing \(\tau_3(\partial B')\).
Finally we push this up to Level 4, adding copies of \(l_4^2\) and \(l_4^3\) (with undercrossings) to the link diagram.
In this way we see that \(\tau_4(\partial B')\) can be represented by the diagram of the four-component unlink in Figure \ref{fig:4_circles_with_crossing_info}.

Each circle in the diagram in Figure \ref{fig:4_circles_with_crossing_info} is marked with a number \(N^i\) indicating how many parallel copies of that circle are present, where \(N^1=t_2^3|t_3^3|t_4^2\),  \(N^2=t_2^3|t_3^3|\),  \(N^3=t_2^3+t_4^3(1+t_2^3|t_3^3|+t_2^3|t_3^4|)\), and  \(N^4=1+t_2^3|t_3^4|\).
Recall that for all \(j\), \(t_2^j,t_4^j\geq 2\), and \(t_3^j\leq -2\).
No matter what values the twist numbers \(\left\{t_i^j\right\}\) take on, provided they follow this rule, it can be shown that \(N^i>0\) for each \(i\), and these three inequalities hold true: \(N^1>N^2\), \(N^2<N^3\), and \(N^3>N^4\).
This means that at each of the six crossings, the strand labeled with the higher number passes under the strand labeled with the lower number, so all six crossings look like the top picture in Figure \ref{fig:multi_strand_crossing_general}.

\begin{prop}\label{prop:BprimeandDminpos}
The position of \(\partial B'\) described by the link diagram in Figure \ref{fig:4_circles_with_crossing_info} is minimal with respect to \(\beta^2\cup\beta^3\cup\beta^4\).
\end{prop}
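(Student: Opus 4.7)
My plan is to prove the proposition by invoking the bigon criterion for minimal position in the marked surface $(F, L \cap F_4)$. Suppose for contradiction that $\tau_4(\partial B')$, as drawn in Figure \ref{fig:4_circles_with_crossing_info}, fails to be in minimal position with respect to some $\beta^i$ for $i \in \{2,3,4\}$. Then there must exist an innermost embedded bigon $\Delta \subset F$ whose boundary consists of a subarc $\mu$ of $\beta^i$ and a subarc $\nu$ of $\tau_4(\partial B')$, with the interior of $\Delta$ disjoint from the remainder of $\tau_4(\partial B') \cup \beta^i$ and from every marked point of $L \cap F_4$. My goal is to show no such $\Delta$ can exist.

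The first step would be to catalogue exactly where each $\beta^i$ meets $\tau_4(\partial B')$ in the diagram: the intersection points occur precisely where $\beta^i$ crosses the four cables carrying $N^1$, $N^2$, $N^3$, and $N^4$ parallel copies of l-loops. Because each l-loop $l_i^j$ encircles the pair of adjacent marked points lying beneath $\Tw_i^j$, and because the $\beta$-arcs and $\gamma$-arcs themselves connect consecutive marked points in $F_4$, the placement of these intersections relative to the marked points is rigid and readable directly from the figure.

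The second step is a case analysis on the innermost bigon $\Delta$. I would trace the arc $\nu$ through the labyrinth of cables, using the inequalities $N^1 > N^2$, $N^2 < N^3$, and $N^3 > N^4$ together with the resulting ``big-over-small'' crossing pattern at each of the six crossings. Bigons internal to a single cable are excluded because parallel copies of an l-loop $l_i^j$ cobound a disk with a subarc of $\beta^i$ only at the cost of enclosing the two marked points of $U_i^j$. Cross-cable bigons are excluded because the ``big-over-small'' rule prevents $\nu$ from migrating from one cable to another without winding around an l-loop and thereby trapping the associated marked points inside $\Delta$. In every case this yields a marked point of $L \cap F_4$ in the interior of $\Delta$, contradicting our choice of innermost bigon.

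The main obstacle is the case analysis itself. Although the picture makes the conclusion visually plausible, enumerating all possible pairs of intersection points on $\beta^i$ that could bound $\partial\Delta$ and certifying in each case that a marked point is trapped requires careful bookkeeping of which cable each strand belongs to at the gates $G^\mathrm{b}, G^\mathrm{o}, G^\mathrm{p}$. The inequalities on the $N^k$ and the uniform signs of the twist numbers at each level are precisely what reduce this enumeration to a manageable set of cases.
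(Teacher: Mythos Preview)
Your opening move is exactly right: invoke the bigon criterion and look for an innermost bigon \(\Delta\) bounded by a subarc \(\mu\subset\beta^i\) and a subarc \(\nu\subset\tau_4(\partial B')\). But after that you are working much harder than necessary, and the case analysis you outline (tracing \(\nu\) through cables, invoking the inequalities \(N^1>N^2\), \(N^2<N^3\), \(N^3>N^4\), and checking that marked points get trapped) is both unwieldy and unneeded.

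The paper's proof dispenses with all of this via a single observation: cut \(\tau_4(\partial B')\) by \(\beta^2\cup\beta^3\cup\beta^4\) into strands, and note from the diagram that \emph{every} such strand has its two endpoints on \emph{distinct} \(\beta\)-arcs. A bigon would require a strand whose endpoints both lie on one \(\beta^i\), so no bigon exists. That is the entire argument. The crossing inequalities and the big-over-small pattern, while important elsewhere in the paper, play no role here; you do not need to know which marked points sit inside which region, only that no strand returns to the \(\beta\)-arc it just left.

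Your proposed argument is not incorrect in spirit, but the ``main obstacle'' you flag is self-imposed. Before embarking on the enumeration, check whether the arc \(\nu\) you are analyzing can even have both endpoints on the same \(\beta^i\); once you see that it cannot, the proof is done in one line.
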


\begin{proof}
The Bigon Criterion in \cite{bigoncrit} tells us that as long as \(\partial B'\) cobounds no bigons with \(\beta^2\cup\beta^3\cup\beta^4\), then they are in minimal position.
Consider \(\partial B'\) as being cut into component strands by \(\beta^2\cup\beta^3\cup\beta^4\).
To bound a bigon, one of these strands would have to have both endpoints on a single \(\beta\)-arc. However, each strand has endpoints on distinct \(\beta\)-arcs.
\end{proof}

Let Lab be the disk in \(F_4\) which contains \(\tau_4(\partial B')\) and whose complement is a regular neighborhood of \(\beta^1\cup\gamma^1\).
(Lab is depicted in Figures \ref{fig:boundary_of_Bprime_two_twists_revised_3} and \ref{fig:4_circles_with_crossing_info}, and also, Lab is isotopic in \(F\) to the top grey rectangle in Figure \ref{fig:B_and_Bprime_disjoint}.)
We will refer to Lab as the \textbf{Labyrinth}.

Define \(G^\text{b},G^\text{o}\), and \(G^\text{p}\) to be the dotted arcs depicted in Figures \ref{fig:boundary_of_Bprime_two_twists_revised_3} and \ref{fig:4_circles_with_crossing_info}.
We will call these the \textbf{brown, orange}, and \textbf{purple gates} of the labyrinth, respectively.
Each gate is an arc in \(\Lab\) with endpoints on \(\partial B'\).

\begin{figure}[h!]
\centering
\includegraphics[scale=.4]{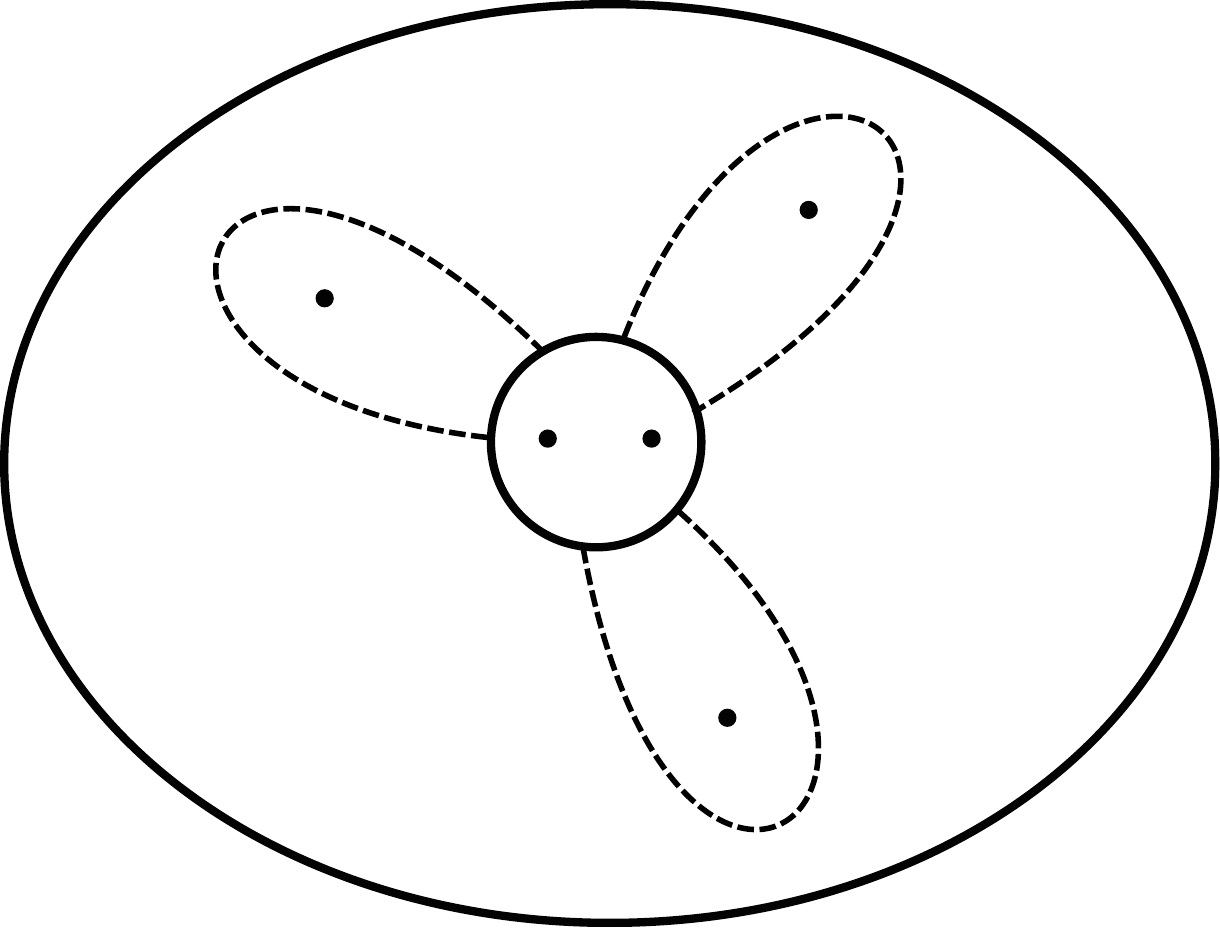}
\caption{
Proposition \ref{prop:lab_structure} asserts that Lab is isotopic to this picture.
\(\partial B'\) and the three gates cut \(\Lab\) into five components: three once-punctured disks, an annulus, and a twice-punctured disk.
}
\label{fig:lab_structure}
\end{figure}

\begin{prop}\label{prop:lab_structure}
\(\partial B'\) and the three gates cut \(\Lab\) into five components: three once-punctured disks, an annulus, and a twice-punctured disk.
\end{prop}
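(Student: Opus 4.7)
The plan is to describe the complement of $\partial B'\cup G^{\mathrm{b}}\cup G^{\mathrm{o}}\cup G^{\mathrm{p}}$ in $\Lab$ directly by reading it off the link-diagram representation of $\tau_4(\partial B')$ in Figure \ref{fig:4_circles_with_crossing_info}.

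I would first record the global topological picture. Since $\partial B'$ is the image under the homeomorphism $f$ induced by $\tau_4$ of the boundary of the disk $B'$, it is a single simple closed curve lying in the interior of $\Lab$. Because $\Lab$ is a disk, $\partial B'$ separates $\Lab$ into an inner disk $D_{\mathrm{in}}$ and an outer annulus $A_{\mathrm{out}}$, with $\partial A_{\mathrm{out}}=\partial B'\cup\partial\Lab$. Each of the three gates is an arc in $\Lab$ with both endpoints on $\partial B'$, so the interior of each gate lies entirely on one side of $\partial B'$.

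Next I would use Figure \ref{fig:4_circles_with_crossing_info}, interpreted via the smoothing conventions of Figures \ref{fig:surgery_smoothing_crossing2} and \ref{fig:multi_strand_crossing_general}, to verify that all three gates and all five marked points of $\Lab$ lie on the inner-disk side of $\partial B'$. Granting this, $A_{\mathrm{out}}$ is one of the five components of the cut, accounting for the annulus in the statement. The inner disk $D_{\mathrm{in}}$ is then cut by three pairwise disjoint properly embedded arcs, which always partition a disk into exactly four sub-disks. The remaining task is to check that the gates are positioned so that three of these sub-disks each contain exactly one marked point and the fourth contains the remaining two, yielding the three once-punctured disks and the twice-punctured disk.

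The main obstacle is the diagrammatic verification in the previous step: locating the gates and marked points relative to $\partial B'$, and confirming the $1+1+1+2$ distribution of marked points, requires carefully smoothing each of the six crossings in Figure \ref{fig:4_circles_with_crossing_info} using the inequalities $N^1>N^2$, $N^2<N^3$, $N^3>N^4$ to determine the overstrand/understrand pattern at each crossing. As a consistency check, Euler characteristic bookkeeping gives $\chi(\Lab)+\chi(\partial B')+3\,\chi(\mathrm{arc})=-4+0+3=-1$, matching the total $3(0)+0+(-1)=-1$ contributed by three once-punctured disks, one annulus, and one twice-punctured disk.
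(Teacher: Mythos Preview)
Your proposal contains a genuine geometric error in the second paragraph. You claim that all three gates and all five marked points of $\Lab$ lie on the \emph{inner}-disk side of $\tau_4(\partial B')$. This is false. Since $B'$ is a cap for the lower bridge arc ${\alpha^4}'$, its boundary $\partial B'$ bounds a twice-punctured disk at level~1, and the homeomorphism $\tau_4$ preserves this: the disk $D_{\mathrm{in}}$ bounded by $\tau_4(\partial B')$ in $\Lab$ contains exactly \emph{two} marked points (it is the disk the paper calls $U_2^3$). The remaining three marked points, and all three gates, lie in the outer annulus $A_{\mathrm{out}}$. The paper's proof makes this explicit: ``$\partial B'$ cuts off the twice-punctured disk $U_2^3$ from $\Lab$ by definition, and clearly none of the gates are in this disk.''

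With the correct picture, your clean ``three arcs cut a disk into four sub-disks'' step no longer applies. The three gates are properly embedded in the \emph{annulus} $A_{\mathrm{out}}$, each with both endpoints on the inner boundary component $\partial B'$. Three such arcs can be arranged in several combinatorially distinct ways (nested, parallel to $\partial B'$, etc.), and these do not all yield three once-punctured disks plus an unpunctured annulus. The paper's proof handles exactly this: it argues that the gates are pairwise unnested (because each must appear on the inner boundary of the outer annulus component~$A$) and that none is parallel to $\partial B'$ (because that would force $A$ to be punctured or force nesting). Only after ruling out these configurations does the desired decomposition follow. Your Euler-characteristic check is consistent with the correct answer, but it does not distinguish the actual configuration from yours, so it cannot substitute for this combinatorial analysis.
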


\begin{proof}
Since each crossing in our link diagram in Figure \ref{fig:4_circles_with_crossing_info} looks like the top picture of Figure \ref{fig:multi_strand_crossing_general}, it is apparent that there is an annulus component, which we will call \(A\).
\(A\)  has \(\partial(\Lab)\) as one boundary component, and the other component of \(\partial A\) alternates between the three gates and three subarcs of \(\partial B'\).
Observe that \(A\) is not punctured.
\(\partial B'\) cuts off the twice-punctured disk \(U_2^3\) from \(\Lab\) by definition, and clearly none of the gates are in this disk.
\(\Lab\hspace{-2pt}\backslash U_2^3\) is a thrice-punctured annulus in which the three gates are properly embedded but not nested;
If any subset of the gates were nested, then at least one of them would not be a boundary component of \(A\). 
None of the gates can be parallel to \(\partial B'\) because that would either force \(A\) to be punctured or force the other gates to be nested.
Thus the only possible configuration is (isotopic to) Figure \ref{fig:lab_structure}.
\end{proof}

We will refer to the three once-punctured disks in Proposition \ref{prop:lab_structure} as the \textbf{brown}, \textbf{orange}, and \textbf{purple punctured disks}, according to the color of the corresponding gate, and we will call the marked point contained therein a \textbf{brown}, \textbf{orange}, or \textbf{purple marked point}.
In the brown disk, there is a unique arc (up to isotopy) connecting the brown point to \(G^\text{b}\), which we will call the \textbf{brown escape route}.
We define the \textbf{orange} and \textbf{purple escape routes} similarly.
(The three escape routes are depicted in the second picture of Figure \ref{fig:escape_route_intersects_b}.)
We can think of the colored points as escaping from a maze whose walls are (\(\partial B'\)), and the escape routes are the paths they take to get to the exits (the gates).

\section{Red disks enter the labyrinth}\label{sec:Cintersectsgamma}

Refer to Figure \ref{fig:bridges}.

\begin{lem}\label{lem:Cintersectsgamma}
 If \(R\) is a red disk above \(F\), then \(R\) must intersect \(\gamma^2\) or \(\gamma^3\).
\end{lem}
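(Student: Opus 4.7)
The plan is to prove the contrapositive: if $\partial R$ is disjoint from $\gamma^2\cup\gamma^3$, then $[R]=[B]$, contradicting that $R$ is red. Since $R$ sits above $F$ and $\gamma^2\cup\gamma^3\subset F$, we have $R\cap(\gamma^2\cup\gamma^3)=\partial R\cap(\gamma^2\cup\gamma^3)$, so the hypothesis is really about the simple closed curve $\partial R\subset F\setminus L$.

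Let $S\subseteq\{1,\ldots,8\}$ denote the set of marked points enclosed on one side of $\partial R$. First, since $\gamma^2$ is a connected arc joining marked points $4$ and $5$ and is disjoint from $\partial R$, those two marked points lie on the same side of $\partial R$; similarly $\gamma^3$ forces $6$ and $7$ to lie on a single side. Second, I plan to use that the region above $F$, with open neighborhoods of the four unknotted upper bridge arcs removed, is a genus-$4$ handlebody $H^+$; thus $H_1(H^+)\cong\Z^4$, generated by meridians $m_1,\ldots,m_4$ of $\alpha^1,\ldots,\alpha^4$. Since $R$ is an embedded disk in $H^+$, its boundary vanishes in $H_1(H^+)$. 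The small loop around marked point $2j-1$ represents $m_j$ and the one around $2j$ represents $-m_j$, so
\[
[\partial R]\;=\;\sum_{j=1}^{4}\bigl(|\{2j-1\}\cap S|-|\{2j\}\cap S|\bigr)\,m_j \;\in\; \Z^4.
\]
Vanishing forces, for each $j$, the ``bridge pair'' $\{2j-1,2j\}$ to be wholly contained in $S$ or wholly disjoint from $S$; equivalently, $S$ is a union of bridge pairs.

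Combining the constraints: the $\gamma^2$-condition pairs $\{3,4\}$ with $\{5,6\}$ (both in $S$ or both out), and the $\gamma^3$-condition pairs $\{5,6\}$ with $\{7,8\}$. So the three bridge pairs $\{3,4\},\{5,6\},\{7,8\}$ all lie on the same side of $\partial R$, while $\{1,2\}$ is free. Together with essentiality ($2\le|S|\le 6$), this leaves exactly two possibilities: $S=\{1,2\}$ and $S=\{3,\ldots,8\}$, which are complementary and represent the same isotopy class—namely that of $\partial B$. Since compressing disks in a handlebody are determined up to isotopy by their boundary curves, $[R]=[B]$.

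The only substantive step is the bridge-pair characterization of disk-bounding curves in $H^+$, but this falls out of a one-line homology computation once one observes that the upper compression body is a handlebody; the rest is a short combinatorial enumeration plus the standard fact about handlebody disks.
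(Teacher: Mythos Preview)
Your homology argument and the constraints coming from $\gamma^2,\gamma^3$ are correct and together give a clean proof that any compressing disk $R$ above $F$ with $\partial R\cap(\gamma^2\cup\gamma^3)=\emptyset$ must have $S=\{1,2\}$, i.e.\ $R$ is a cap for $\alpha^1$. But the next sentence is where the argument breaks: knowing $S=\{1,2\}$ does \emph{not} determine the isotopy class of $\partial R$ in the $8$-punctured sphere. On a sphere with at least four marked points there are infinitely many isotopy classes of simple closed curves realizing any given nontrivial partition of the marked points, and many of these bound disks in the upper handlebody. Concretely, Figure~\ref{fig:cap_examples} exhibits two non-isotopic caps for a single bridge arc; both have the same $S$. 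So the inference ``$S=\{1,2\}\Rightarrow\partial R\simeq\partial B$'' is unjustified, and with it the conclusion $[R]=[B]$.

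This gap is exactly what the paper's Subcase~2.2 is there to fill. The paper first minimizes $|R\cap\mathfrak D|$ within the isotopy class of $R$; for a cap $R$ that still meets $\mathfrak D$, an outermost-arc argument on the bridge disks then forces $\partial R$ to hit $\gamma^2$ or $\gamma^3$. Only in Subcase~2.1, where $R$ is already disjoint from $\mathfrak D$, does one have enough additional arcs ($\beta^2,\beta^3,\beta^4$) avoided by $\partial R$ to pin the curve down uniquely as $\partial B$. Your homology computation is a tidy replacement for the paper's Case~1 separation argument, but you still need something like Subcase~2.2 to handle the exotic caps for $\alpha^1$.
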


\begin{proof}
\(F\) cuts \(S^3\) into two 3-balls: \(M_+\) above \(F\) and \(M_-\) below \(F\).
\(R\) divides \(M_+\) into two 3-balls which we will call \(M_+^1\) and \(M_+^2\) and refer to as the two sides of \(R\).

\begin{figure}[h!]
\centering
\labellist \small\hair 2pt
\pinlabel {\(F\)} at 30 10
\pinlabel {\(R\)} [b] at 200 130
\pinlabel {\(\alpha^k\)} at 320 45
\pinlabel {\(\alpha^{k+1}\)} at 422 32
\endlabellist
\includegraphics[scale=.5]{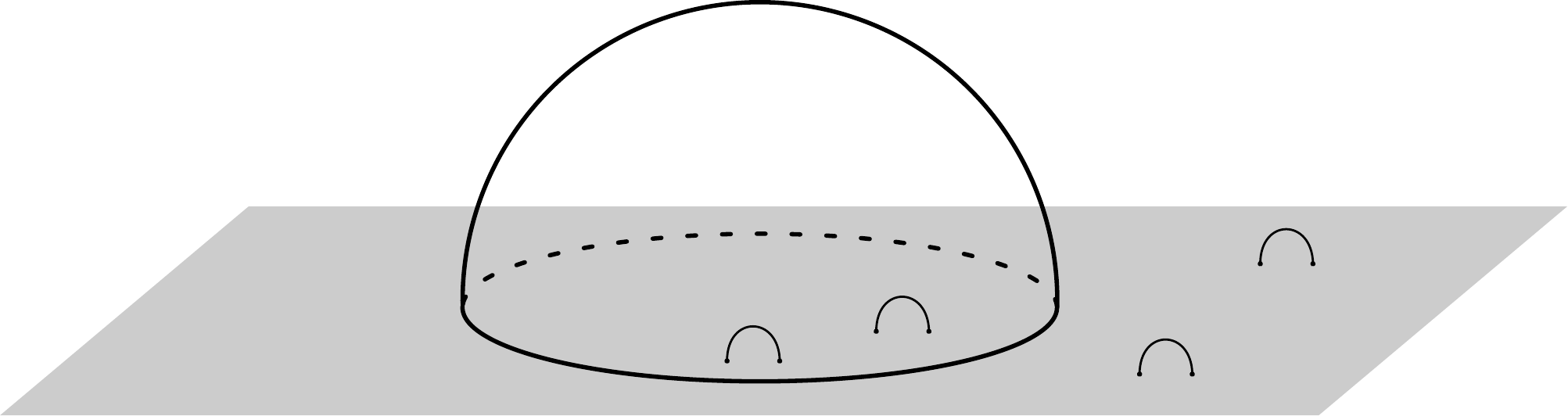}
\caption{In Case 1, \(\alpha^2,\alpha^3\), and \(\alpha^4\) appear on both sides of \(R\).}
\label{fig:alpha2_separated_from_alpha3}
\end{figure}

\textbf{Case 1:} \(\alpha^2,\alpha^3\), and \(\alpha^4\) are not all on the same side of \(R\).
Then for \(k=2\) or \(k=3\), \(\alpha^k\) and \(\alpha^{k+1}\) are on opposite sides of \(R\).
Since \(\gamma^k\) connects an endpoint of \(\alpha^k\) and an endpoint of \(\alpha^{k+1}\), \(\gamma^k\) must intersect \(R\).

\begin{figure}[h!]
\centering
\labellist \small\hair 2pt
\pinlabel {\(F\)} at 30 10
\pinlabel {\(R\)} [b] at 160 160
\pinlabel {\(\alpha^1\)} at 475 83
\pinlabel {\(M_+^2\)} [b] at 370 130
\pinlabel {\(M_+^1\)} [b] at 415 170
\endlabellist
\includegraphics[scale=.5]{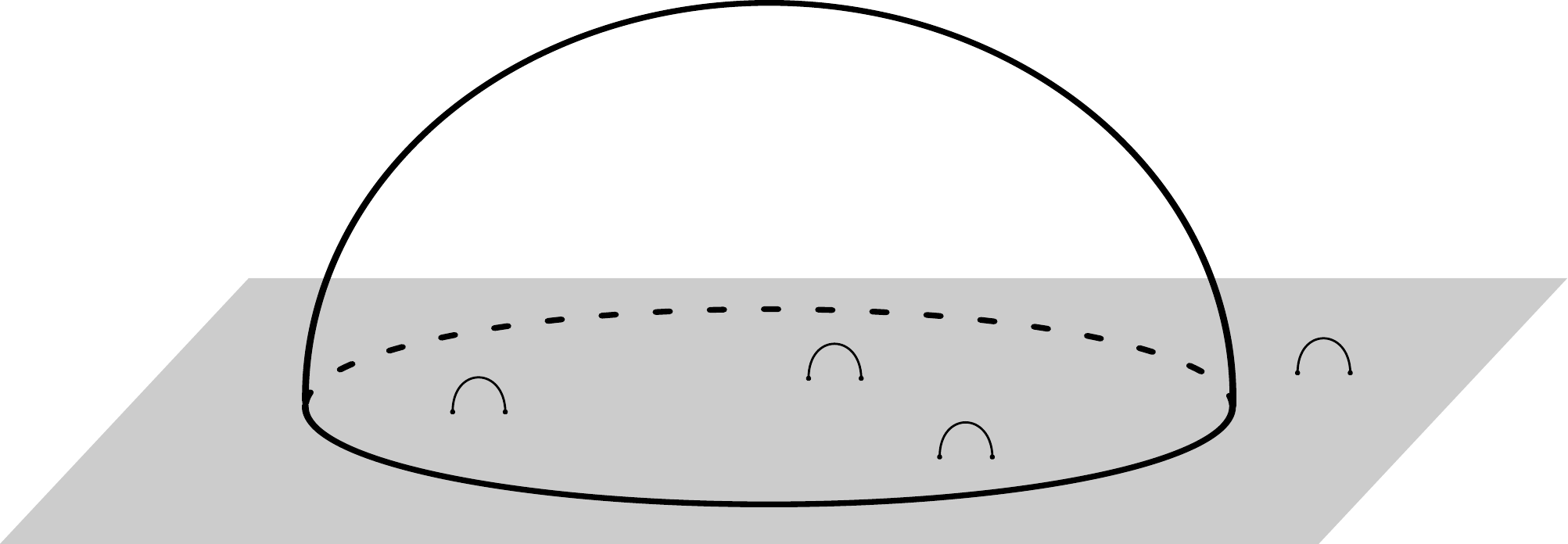}
\caption{In Case 2, \(\alpha^1\subset M_+^1\) and \(\alpha^2,\alpha^3,\alpha^4\subset M_+^2\).}
\label{fig:R_a_cap_for_alpha1}
\end{figure}

\textbf{Case 2:} All of the bridge arcs \(\alpha^2,\alpha^3\), and \(\alpha^4\) are on the same side of \(R\).
Without loss of generality, say they are in \(M_+^2\).
See Figure \ref{fig:R_a_cap_for_alpha1}.
Notice that \(\alpha^1\) cannot also be in \(M_+^2\) because that would imply \(\partial R\) is null-homotopic in \(F\), contradicting the fact that \(R\) is a compressing disk.
Therefore, in Case 2, \(R\) separates \(\alpha^1\) from the other bridge arcs, which implies \(R\) is a cap for \(\alpha^1\).
Let \(\mathfrak{D}=D^2\cup D^3\cup D^4\), and isotope \(R\) to minimize \(\#|R\cap \mathfrak{D}|\).

\textbf{Subcase 2.1:} \(R\) is disjoint from \(\mathfrak{D}\).

Assume (for contradiction) that \(R\) is also disjoint from  \(\gamma^2\) and \(\gamma^3\).
Then we have a single straight arc \({\Gamma=\beta^2\cup\gamma^2\cup\beta^3\cup\gamma^3\cup\beta^4}\) disjoint from \(\partial R\) because \(\beta^j\subset D^j\subset \mathfrak{D}\).
So \(\partial R\) cuts \(F\) into two disks, one of which contains \(\Gamma\), and the other of which contains \(\gamma^1\).
But of course, there is only one such loop in \(F\), which is \(\partial B\).
Thus \(R\simeq B\), so the red disk \(R\) is blue, a contradiction.
We conclude that in this subcase, \(\partial R\) must intersect \(\gamma^2\) or \(\gamma^3\).

\textbf{Subcase 2.2:} \(R\) is not disjoint from \(\mathfrak{D}\).

\begin{figure}[h!]
\centering
\labellist \small\hair 2pt
\pinlabel {\(F\)} at 30 10
\pinlabel {\(\alpha^k\)} [b] at 250 160
\pinlabel {\(D^k\)} [b] at 290 130
\pinlabel {\(\tilde{\alpha}\)} [b] at 272 60
\pinlabel {\(\tilde{D}\)} [b] at 287 40
\pinlabel {\(\gamma^{k-1}\)} [b] at 140 43
\pinlabel {\(\gamma^{k}\)} [b] at 440 43
\pinlabel {\(\tilde{\beta}\)} at 287 17
\endlabellist
\includegraphics[scale=.5]{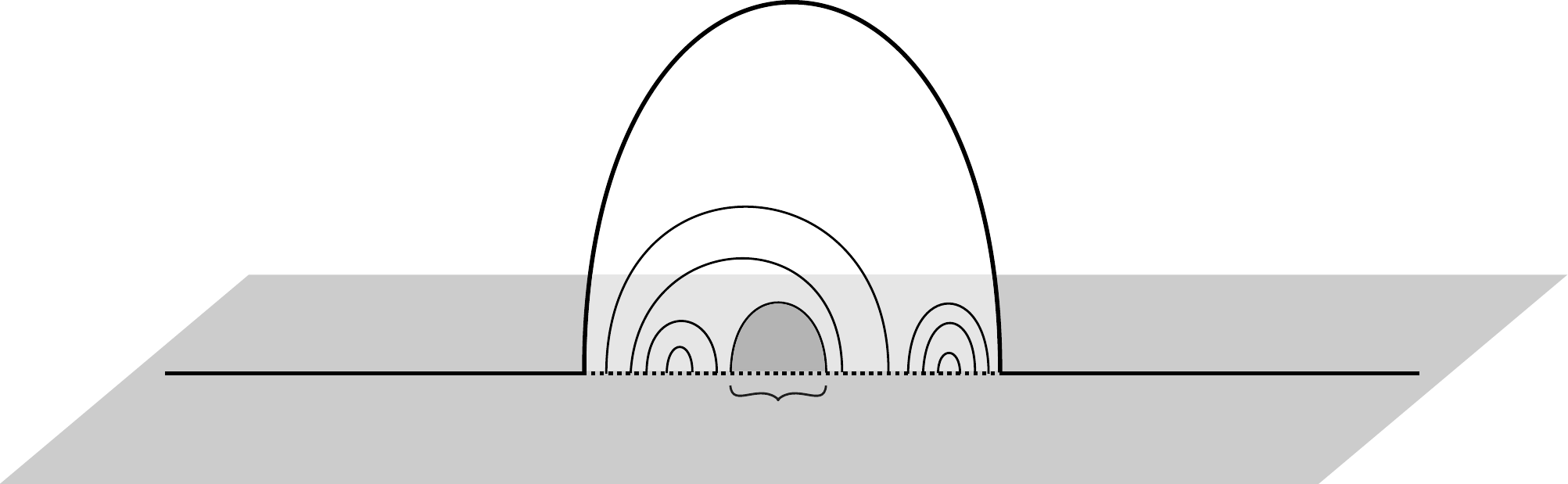}
\caption{In Subcase 2.2, \(R\cap D^k\) is nonempty.  Possible arcs of intersection are depicted on \(D^k\).}
\label{fig:C_intersects_Bk_unlabeled}
\end{figure}

For some \(D^k\subset\mathfrak{D}\), \(R\cap D^k\) is nonempty.
\(R\cap D^k\) cannot contain loop intersections because \(\#|R\cap \mathfrak{D}|\) is minimal, and \(S^3\backslash\eta(L)\) is irreducible.
\(R\cap D^k\) cannot contain an arc with either endpoint on \(\alpha^k\) because that would imply that \(R\) intersects \(L\).
Thus the components of intersection in \(D^k\) must all be arcs with both endpoints on \(\beta^k\).
See Figure \ref{fig:C_intersects_Bk_unlabeled}.

\begin{figure}[h!]
\centering
\labellist \small\hair 2pt
\pinlabel {\(F\)} at 30 10
\pinlabel {\(R\)} [b] at 160 160
\pinlabel {\(\alpha^1\)} at 478 83
\pinlabel {\(M_+^2\)} [b] at 370 130
\pinlabel {\(M_+^1\)} [b] at 415 170
\pinlabel {\(\tilde{D}\)} [b] at 280 120
\pinlabel {\(\tilde{\alpha}\)} [b] at 325 150
\pinlabel {\(\tilde{\beta}\)} at 280 36
\endlabellist
\includegraphics[scale=.5]{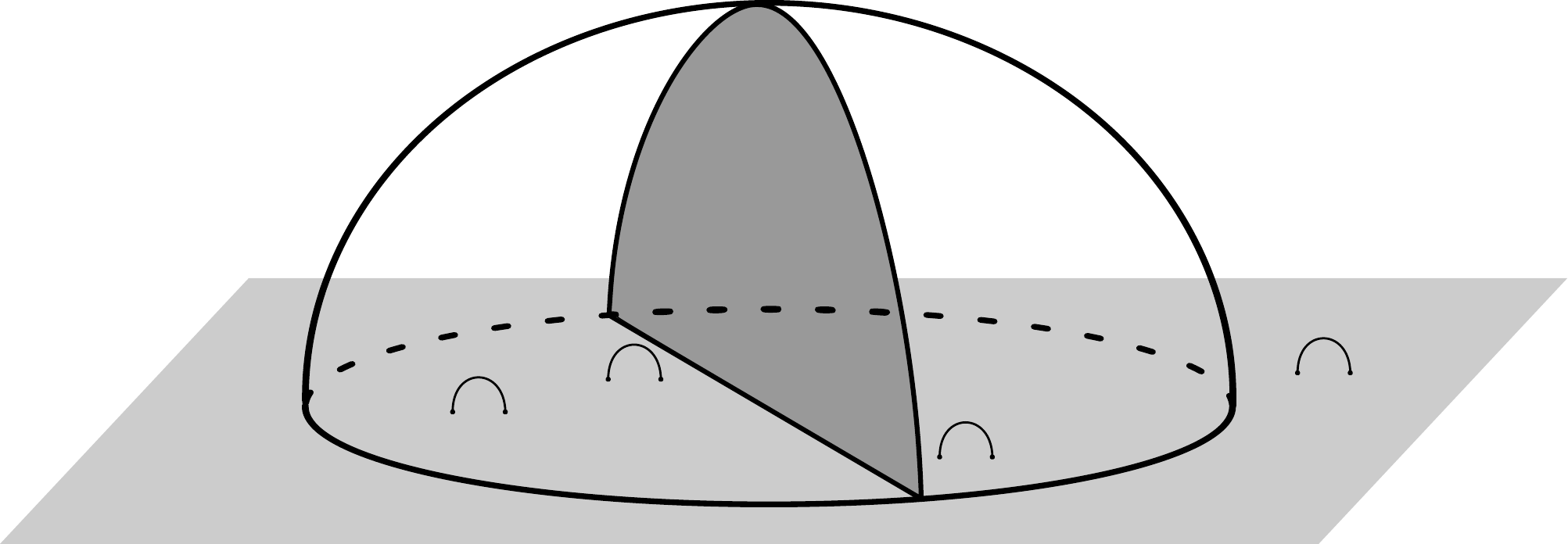}
\caption{In Subcase 2.2, \(R\cap D^k\) is nonempty.  Here we see \(R\) and the innermost disk \(\tilde{D}\).}
\label{fig:C_with_D_prime_unlabeled}
\end{figure}

Take an  arc of intersection outermost on \(D^k\) and call it \(\tilde{\alpha}\).
Let \(\tilde{\beta}\) be the subarc of \(\beta^k\) that shares endpoints with \(\tilde{\alpha}\).
Then let \(\tilde{D}\) be the subdisk of \(D^k\) cobounded by \(\tilde{\alpha}\) and \(\tilde{\beta}\).
See Figures \ref{fig:C_intersects_Bk_unlabeled} and \ref{fig:C_with_D_prime_unlabeled}.
\(\tilde{D}\subset M_+^j\) for either \(j=1\) or \(j=2\).
Whichever is the case, \(\tilde{D}\) cuts \(M_+^j\) into two balls which we call the sides of \(\tilde{D}\).
Recall that we assume \(\alpha^2,\alpha^3,\alpha^4\subset M_+^2\), so therefore \(\alpha^1\subset M_+^1\).
\(\tilde{D}\) cannot be in \(M_+^1\) with \(\alpha^1\).
If it were, then since \(\alpha^1\) cannot intersect \(R\) or \(\tilde{D}\), \(\alpha^1\) would have to be completely contained on one side or the other of \(\tilde{D}\).
Then the other side of \(\tilde{D}\) would be an empty 3-ball through which we can isotope \(R\), removing at least one component of intersection with \(D^k\), which is a contradiction since \(\#|R\cap \mathfrak{D}|\) is already minimal.
Thus \(\tilde{D}\) is in \(M_+^2\) with \(\alpha^2,\alpha^3\), and \(\alpha^4\).

If \(\alpha^2,\alpha^3\), and \(\alpha^4\) were on the same side of \(\tilde{D}\), that would mean that the other side of \(\tilde{D}\) was an empty 3-ball through which we could isotope \(R\), removing the arc of intersection \(\tilde{\alpha}\subset R\cap\mathfrak{D}\), contradicting minimality.
Thus it must be the case that there are some \(\alpha\)-arcs in either side of \(\tilde{D}\).

Then for \(k=2\) or \(k=3\), \(\alpha^k\) and \(\alpha^{k+1}\) are on opposite sides of \(\tilde{D}\).
An endpoint of \(\alpha^k\) and an endpoint of \(\alpha^{k+1}\) are connected by the arc \(\gamma^k\) in \(F\).
None of the \(\gamma\)-arcs have interiors that intersect \(\mathfrak{D}\), so \(\gamma^k\) must pass through \(\partial R\) in order to connect \(\alpha^k\) to \(\alpha^{k+1}\).
Thus we have proved that in Subcase 2.2, \(\partial R\cap\left(\gamma^2\cup\gamma^3\right)\neq\emptyset\).

Now we have proved both subcases, concluding Case 2 and finishing the proof of Lemma \ref{lem:Cintersectsgamma}.
\end{proof}

\begin{cor}\label{cor:onegate}
Every red disk intersects at least one of \(\tau_4(\partial B')\), \(G^\text{b}\), \(G^\text{o}\), and \(G^\text{p}\).
\end{cor}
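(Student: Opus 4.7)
The plan is to derive this corollary from Lemma~\ref{lem:Cintersectsgamma} via an isotopy argument. Specifically, I would show that each arc $\gamma^j$ (for $j=2,3$) can be isotoped in $F_4 \setminus L$, rel its marked-point endpoints, so that outside arbitrarily small neighborhoods of its two endpoints it lies entirely on the 1-complex $\mathcal{G} := \tau_4(\partial B') \cup G^\text{b} \cup G^\text{o} \cup G^\text{p}$. Because the geometric intersection number of $R$ with $\gamma^j$ is invariant under such isotopies of the arc, the intersection forced by Lemma~\ref{lem:Cintersectsgamma} will then be realized on $\mathcal{G}$.

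To perform the isotopy, I would invoke Proposition~\ref{prop:lab_structure}: the 1-complex $\mathcal{G}$ cuts $\Lab$ into five components---three once-punctured disks, an annulus, and the twice-punctured disk $U_2^3$---and the boundary of every one of these components is entirely contained in $\mathcal{G}$. Each endpoint of $\gamma^j$ is a marked point of $\Lab$ and hence lies in the interior of exactly one of the five components. Near each endpoint I would isotope $\gamma^j$ to leave a tiny radial stub at the marked point and then reach the boundary of the containing component, which lies in $\mathcal{G}$. I would then route the interior of $\gamma^j$ along $\mathcal{G}$ itself to join up with the other stub; this is possible because $\mathcal{G}$ is connected, as every gate has both endpoints on $\tau_4(\partial B')$.

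Given this isotopy, consider a red disk $R$ above $F$. Lemma~\ref{lem:Cintersectsgamma} ensures the geometric intersection number of $R$ with $\gamma^2 \cup \gamma^3$ is at least one. Replacing these arcs by the isotoped versions above and passing to a minimal-position representative of $R$, the persisting intersection point must lie either on $\mathcal{G}$ or on one of the two stubs. Since $R$ is disjoint from $L$ and the stubs can be shrunk into an arbitrarily small neighborhood of the marked points, the intersection lies on $\mathcal{G}$, giving $R \cap \mathcal{G} \neq \emptyset$. A red disk below $F$ is treated by the symmetric analog of Lemma~\ref{lem:Cintersectsgamma}, using the lower bridge disks and the corresponding $\gamma$-arcs on $F_1$ transported via $\tau_4$. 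The main obstacle is the isotopy construction of the middle paragraph, which rests entirely on the structural content of Proposition~\ref{prop:lab_structure}---that every component of $\Lab \setminus \mathcal{G}$ has boundary fully inside $\mathcal{G}$---together with the connectedness of $\mathcal{G}$ itself.
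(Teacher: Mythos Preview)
Your approach has a genuine gap in the central isotopy claim. You assert that $\gamma^j$ can be isotoped rel endpoints to an arc of the form (tiny radial stub) $+$ (path on $\mathcal{G}$) $+$ (tiny radial stub), and you justify this solely by the connectedness of $\mathcal{G}$. But connectedness only guarantees that \emph{some} arc with the same endpoints has that form; it says nothing about whether that arc lies in the isotopy class of $\gamma^j$. In fact the claim fails for general arcs: by Proposition~\ref{prop:lab_structure} one of the components of $\Lab\setminus\mathcal{G}$ is the twice-punctured disk $U_2^3$, and any arc whose intersection with $U_2^3$ winds around one of its two punctures cannot be homotoped to your model form. The reason is that every loop in $\mathcal{G}$ encircles the two punctures of $U_2^3$ only as a pair (via $\tau_4(\partial B')=\partial U_2^3$), so the image of $\pi_1(\mathcal{G})$ in $\pi_1(\Lab)$ cannot produce an element winding around just one of them; your tiny radial stubs do not compensate for this. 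You never verify that the specific arcs $\gamma^2,\gamma^3$ avoid this obstruction, and given that $\tau_4(\partial B')$ spirals many times across them, there is no reason to expect they reduce to a single stub at each end.

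The paper's proof bypasses this entirely by arguing in the opposite direction: assume $\partial R$ misses $\mathcal{G}$, so $\partial R$ sits in a single component of $\Lab\setminus\mathcal{G}$, and then eliminate each component in turn. The three once-punctured disks are ruled out because a compressing-disk boundary cannot bound a once-punctured disk in $F$; the twice-punctured disk is ruled out because $\partial R$ would then be isotopic to $\tau_4(\partial B')$, making $R\cup B'$ a splitting sphere for the non-split link $L$; and only for the annulus component is Lemma~\ref{lem:Cintersectsgamma} invoked, using the observation that $\gamma^2\cup\gamma^3$ is disjoint from that annulus. This argument never needs to isotope $\gamma^j$ anywhere. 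Finally, your attempt to also cover red disks below $F$ is unnecessary (and not well-posed as stated): the corollary, as used later in the paper, concerns red disks above $F$.
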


\begin{proof}
Recall that by Proposition \ref{prop:lab_structure}, \(\Lab\backslash\left(\tau_4(\partial B')\cup G^\text{b}\cup G^\text{o}\cup G^\text{p}\right)\) has five components:
three (colored) once-punctured disks,
one twice-punctured disk (\(U_2^3\)),
and an annulus.
See Figure \ref{fig:lab_structure}.
Let \(R\) be a red compressing disk above \(F\).
The boundary of a compressing disk cannot bound a once-punctured disk in \(F\).
Thus \(\partial R\) cannot be contained in one of the three colored once-punctured disks.
If \(\partial R\) lies in the twice-punctured disk, then \(\partial R\) must be isotopic to \(\tau_4(\partial B')\).
But then after the isotopy, \(R\cup B'\) would be a splitting sphere for \(L\), a contradiction since \(L\) is non-split.
We see in Figures \ref{fig:boundary_of_Bprime_two_twists_revised_3} and \ref{fig:4_circles_with_crossing_info} that \(\gamma^2\) and \(\gamma^3\) are disjoint from the annulus component of \(\Lab\), so if \(\partial R\) is contained in the annulus, \(\partial R\) would fail to intersect \(\gamma^2\) or \(\gamma^3\), contradicting Lemma \ref{lem:Cintersectsgamma}.
We conclude \(\partial R\) cannot lie in a single component of \(\Lab\backslash\left(\tau_4(\partial B')\cup G^\text{b}\cup G^\text{o}\cup G^\text{p}\right)\);
therefore it must intersect \(\tau_4(\partial B')\cup G^\text{b}\cup G^\text{o}\cup G^\text{p}\).
\end{proof}

%
%

\section{All Red Disks Above \(F\) Intersect All Blue Disks Below}\label{sec:proof}
%
%


Recall the disks and arcs defined in Section \ref{sec:setting} and pictured in Figures \ref{fig:bridges} and \ref{fig:alpha_4_prime}.
Let \(\mathfrak{D}=D^2\cup D^3\cup D^4\) as in Section \ref{sec:Cintersectsgamma}.
Let \(\mathcal{R}\) be the set of red compressing disks above \(F\) which are disjoint from \(\beta'\).
Throughout Section \ref{sec:proof}, whenever \(\mathcal{R}\) is nonempty, assume that \(R\) is a disk in \(\mathcal{R}\) such that \(|R\cap \mathfrak{D}|\leq |R'\cap \mathfrak{D}|\) for all \(R'\in\mathcal{R}\), and assume \(R\) is in minimal position with respect to \(\mathfrak{D}\) and with respect to the gates.
\(\beta'\) cuts \(\beta^i\) into multiple subarcs, which we call \textbf{lanes}.

\begin{lem}\label{lem:samelane}
Assume \(\mathcal{R}\) is nonempty.
If two points of \(R\cap\beta^i\) lie in the same lane, then they cannot be the endpoints of a common arc of \(R\cap D^i\).
\end{lem}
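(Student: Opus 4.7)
The plan is to derive a contradiction by performing a disk surgery on $R$ that produces a disk in $\mathcal{R}$ with strictly fewer intersections with $\mathfrak{D}$. Suppose some arc $\tilde{\alpha}$ of $R \cap D^i$ has both endpoints in a single lane of $\beta^i$. A standard innermost-disk argument, using $R \cap L = \emptyset$ together with the irreducibility of $S^3 \setminus \eta(L)$ (which follows from $L$ being non-split, as noted in Section \ref{sec:setting}), lets me assume $R \cap D^i$ consists entirely of arcs with both endpoints on $\beta^i$. Among such arcs with both endpoints in the same lane, choose $\tilde{\alpha}$ to be innermost on $D^i$; let $\tilde{\beta} \subset \beta^i$ be the subarc between its endpoints, and let $\tilde{D} \subset D^i$ be the subdisk cobounded by $\tilde{\alpha} \cup \tilde{\beta}$, so the interior of $\tilde{D}$ is disjoint from $R$. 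By the same-lane hypothesis $\tilde{\beta}$ is disjoint from $\beta'$, and since the interior of $\tilde{D}$ lies off $F$, the whole of $\tilde{D}$ is disjoint from $\beta'$.

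Next I would surger $R$ along $\tilde{D}$: cutting $R$ along $\tilde{\alpha}$ splits it into two subdisks $R_1, R_2$, and capping each with a parallel push-off of $\tilde{D}$ to one side of $D^i$ produces disks $R_1', R_2'$ with $\partial R_j' = \rho_j \cup \tilde{\beta}$, where $\rho_j \subset \partial R$. Since $\partial R$ is disjoint from $\beta'$ and $\tilde{\beta}$ sits in one lane, each $\partial R_j'$ is disjoint from $\beta'$. Because $D^2, D^3, D^4$ are pairwise disjoint, a small enough push-off introduces no new intersections with $\mathfrak{D}$, so $|R_j' \cap \mathfrak{D}| \leq |R \cap \mathfrak{D}| - 1$.

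The remaining and most delicate step is showing that at least one of $R_1', R_2'$ belongs to $\mathcal{R}$. Both disks lie above $F$, so neither is isotopic to $B'$. Let $E_j \subset F$ be the disk cobounded by $\partial R_j'$ on the same side of $\partial R$ as $\tilde{\beta}$; the sphere $R_j' \cup E_j$ meets $L$ transversely, so $|E_j \cap L|$ is even. A short case analysis of the possible marked-point counts in $E_1, E_2$ (noting that each side of $\partial R$ in $F$ carries an even number of marked points between $2$ and $6$) shows that at least one $R_j'$ is a compressing disk. Moreover, $R_1'$ and $R_2'$ cannot both be isotopic to $B$: such an isotopy would force $E_1$ and $E_2$ to enclose the same two marked points (the endpoints of $\alpha^1$), yet $E_1$ and $E_2$ lie on opposite sides of $\tilde{\beta}$, whose interior contains no marked points. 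A final short check, using that $R$ itself is red (hence not a cap for $\alpha^1$), rules out the degenerate case in which exactly one $R_j'$ is compressing and that one happens to be isotopic to $B$. This produces $R_j' \in \mathcal{R}$ with $|R_j' \cap \mathfrak{D}| < |R \cap \mathfrak{D}|$, contradicting the minimality of $R$. The main obstacle will be this last red/blue bookkeeping, especially in the case when $R$ is itself a cap.
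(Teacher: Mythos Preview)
Your approach coincides with the paper's at the key step: both boundary-compress $R$ along the outermost subdisk of $D^i$ cut off by a same-lane arc, producing two disks $R^1,R^2$ disjoint from $\beta'$ with strictly fewer intersections with $\mathfrak{D}$. The endgames differ. The paper does not split into cases: it first observes that if either $R^j$ were trivial it would furnish an isotopy of $R$ reducing $|R\cap\mathfrak{D}|$, contradicting the standing hypothesis that $R$ is already in minimal position with respect to $\mathfrak{D}$. Hence both $R^j$ are compressing disks, and minimality of $R$ in $\mathcal{R}$ forces both to be blue, i.e.\ isotopic to $B$; but then $R$ is a band sum of two parallel copies of $B$ and therefore trivial, a contradiction. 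This avoids your marked-point bookkeeping entirely.

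Your case analysis is largely fine, but the ``final short check'' contains a real error. The implication ``$R$ is red, hence not a cap for $\alpha^1$'' is false: the blue class above $F$ consists only of the isotopy class of the one specific cap $B$, and there are many other (red) caps for $\alpha^1$. So your stated reason does not rule out the degenerate case where $R_1'$ is trivial and $R_2'\simeq B$. The correct argument is immediate once you remember that $R$ is the band sum of $R_1'$ and $R_2'$ (dual to the boundary compression): if $R_1'$ is trivial then $R\simeq R_2'$, so $R_2'\simeq B$ would make $R$ blue. Equivalently, and more in line with the paper, a trivial $R_1'$ already contradicts the assumed minimal position of $R$ with respect to $\mathfrak{D}$, which kills the degenerate case outright and lets the band-sum-of-parallels argument finish the proof without further casework.
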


\begin{proof} Suppose there exists an arc of \(R\cap D^i\) whose endpoints lie in the same lane.
We may assume the arc is outermost on \(D^i\).
It cuts off a small subdisk \(\tilde{D^i}\subset D^i\) which doesn't intersect \(\beta'\).
Boundary-compress \(R\) along \(\tilde{D^i}\).
The result is two disks, \(R^1,R^2\) whose boundaries lie in \(F\), and which are disjoint from \(\beta'\).
Suppose \(R^1\) is trivial.
Then it represents an isotopy through which we can move \(R\) to decrease its intersection with the bridge disks.
But that implies \(R\) was not in minimal position with respect to the bridge disks, a contradiction.
Thus \(R^1\) is not trivial, and similarly, neither is \(R^2\).

This means both \(R^1\) and \(R^2\) are compressing disks.
By construction, neither of them intersects \(\beta'\), and for both \(i\), \(|R^i\cap \mathfrak{D}|<|R\cap \mathfrak{D}|\).
Thus by our choice of \(R\) as a minimal representative from \(\mathcal{R}\), \(R^i\not\in\mathcal{R}\).
Then by the definition of \(\mathcal{R}\), both \(R^i\) must be blue.
There is only one blue disk above \(F\), which is \(B\), so \(R^1\) and \(R^2\) must be parallel copies of \(B\).
This implies \(R\) is a band sum of parallel disks, but any band sum of parallel disks is trivial, so \(R\) is a trivial disk, which is a contradiction.
\end{proof}

By definition, \(R\in\mathcal{R}\) implies  \(R\cap\beta'=\emptyset\), so \(R\) can also be made disjoint from \(\partial B'\) since \(\partial B'\) is the boundary of a regular neighborhood of \(\beta'\).
It follows from Corollary \ref{cor:onegate}, that \(R\) must intersect at least one gate.
Figure \ref{fig:lab_structure} makes clear something not at all obvious in Figures \ref{fig:boundary_of_Bprime_two_twists_revised_3} and \ref{fig:4_circles_with_crossing_info}, which is that if an arc of \(R\) ``enters" the labyrinth through a particular gate, it must subsequently ``exit" the labyrinth from the same gate.
We will call the components of the intersection of \(\partial R\) and the brown disk \textbf{brown tracks}, and we will define \textbf{orange tracks} and \textbf{purple tracks} similarly.
Observe that these tracks are pairwise disjoint, and each track has endpoints on the gate of the corresponding color.

\begin{figure}[h!]
\labellist \small\hair 2pt
\pinlabel {\(\partial R\)} at -11 103
\pinlabel {brown track} at 58 89
\pinlabel {\(3\)} at 38 116
\pinlabel {\(2\)} at 54 116
\pinlabel {\(3\)} at 70 116
\pinlabel {\(\partial R\)} at -11 56
\pinlabel {orange track} at 58 40
\pinlabel {\(3\)} at 38 69
\pinlabel {\(4\)} at 54 69
\pinlabel {\(3\)} at 70 69
\pinlabel {\(4\)} at 86 69
\pinlabel {\(3\)} at 102 69
\pinlabel {\(2\)} at 118 69
\pinlabel {\(3\)} at 134 69
\pinlabel {\(2\)} at 150 69
\pinlabel {\(3\)} at 166 69
\pinlabel {\(4\)} at 182 69
\pinlabel {\(3\)} at 198 69
\pinlabel {\(4\)} at 214 69
\pinlabel {\(3\)} at 230 69
\pinlabel {\(\partial R\)} at -11 8
\pinlabel {purple track} at 58 -10
\pinlabel {\(4\)} at 38 21
\pinlabel {\(3\)} at 54 21
\pinlabel {\(4\)} at 70 21
\pinlabel {\(3\)} at 86 21
\pinlabel {\(4\)} at 102 21
\pinlabel {\(3\)} at 118 21
\pinlabel {\(4\)} at 134 21
\pinlabel {\(3\)} at 150 21
\pinlabel {\(4\)} at 166 21
\pinlabel {\(3\)} at 182 21
\pinlabel {\(4\)} at 198 21
\pinlabel {\(3\)} at 214 21
\pinlabel {\(4\)} at 230 21
\pinlabel {\(3\)} at 246 21
\pinlabel {\(2\)} at 262 21
\pinlabel {\(3\)} at 278 21
\pinlabel {\(2\)} at 294 21
\pinlabel {\(3\)} at 310 21
\pinlabel {\(4\)} at 326 21
\pinlabel {\(3\)} at 343 21
\pinlabel {\(4\)} at 359 21
\pinlabel {\(3\)} at 375 21
\pinlabel {\(2\)} at 391 21
\pinlabel {\(3\)} at 407 21
\pinlabel {\(2\)} at 423 21
\pinlabel {\(3\)} at 439 21
\pinlabel {\(4\)} at 455 21
\pinlabel {\(3\)} at 471 21
\pinlabel {\(4\)} at 487 21
\pinlabel {\(3\)} at 503 21
\pinlabel {\(4\)} at 519 21
\pinlabel {\(3\)} at 535 21
\pinlabel {\(4\)} at 551 21
\pinlabel {\(3\)} at 567 21
\pinlabel {\(4\)} at 583 21
\pinlabel {\(3\)} at 599 21
\pinlabel {\(4\)} at 615 21
\pinlabel {\(3\)} at 631 21
\pinlabel {\(4\)} at 647 21
\endlabellist
\hfill\includegraphics[width=.96\textwidth]{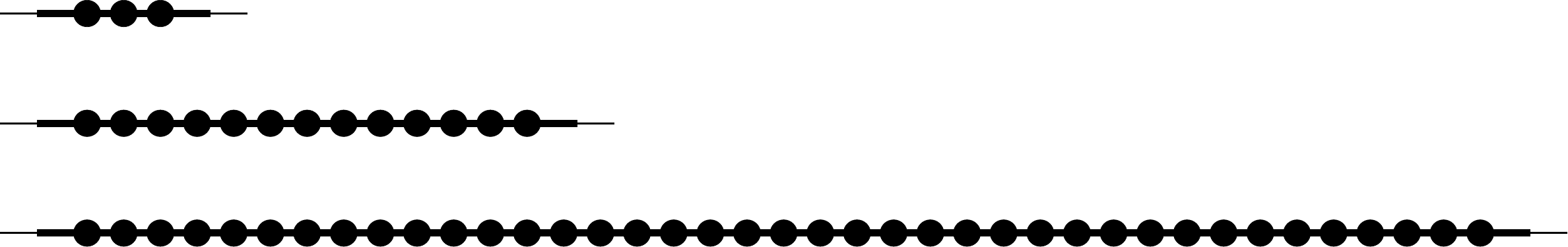}
\vspace{10pt}
\caption{Here we depict the brown, orange, and purple tracks in \(\partial R\) with their respective sequences of numbered points for the example depicted in Figure \protect\ref{fig:boundary_of_Bprime_two_twists_revised_3}, (in which \(t_1^j=t_3^j=2\) and \(t_2^j=-2\) for all \(j\)).
We will not prove this figure, nor does the paper depend on it.
The reader can check it by taking a pencil (and perhaps a magnifying glass) to Figure \protect\ref{fig:boundary_of_Bprime_two_twists_revised_3} and tracing out a path that winds in a gate, around a marked point inside the labyrinth, and then back out.}
\label{fig:tracks_with_colors_two_twist_case}
\end{figure}

We label the points of \(\partial R\cap D^2\), \(\partial R\cap D^3\), and \(\partial R\cap D^4\) \textbf{2-points}, \textbf{3-points}, and \textbf{4-points}, respectively, and we will collectively refer to these as \textbf{numbered points}.
Likewise, we will label the arcs of \(R\cap D^2\), \(R\cap D^3\), and \(R\cap D^4\) \textbf{2-arcs, 3-arcs}, and \textbf{4-arcs}, respectively, and we will collectively call them \textbf{numbered arcs}.
Observe that the endpoints of a \(j\)-arc are \(j\)-points.
The next result follows from these definitions.

\begin{lem}\label{lem:no_connection_off_of_track}
If \(\mathcal{R}\) is nonempty, at least one endpoint of each 2-arc must lie in a track, and all 3- and 4-points lie in tracks.
\end{lem}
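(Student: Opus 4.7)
The plan is to reduce the lemma to a geometric statement about the position of the \(\beta\)-arcs inside \(\Lab\) with respect to the five-piece decomposition of Proposition \ref{prop:lab_structure}. First, I would verify from the diagrammatic description of \(\tau_4(\partial B')\) in Figure \ref{fig:4_circles_with_crossing_info} that \(\beta^3\) and \(\beta^4\) are contained entirely in the union of the three colored once-punctured disks --- in particular, both their endpoint marked points sit in colored disks (not in \(U_2^3\)), and the arcs themselves do not wander into \(U_2^3\) or into the non-punctured annulus component of \(\Lab\). Granting this, any 3-point is a point of \(\partial R \cap \beta^3\) lying inside some colored disk, and hence lies on a component of \(\partial R \cap\) (colored disk), which by the preceding paragraph is by definition a track. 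The same argument handles every 4-point.

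For the 2-arc claim, \(\beta^2\) is distinguished from \(\beta^3\) and \(\beta^4\) because one of its endpoint marked points lies outside \(\Lab\) (in the regular neighborhood of \(\beta^1 \cup \gamma^1\)), so \(\beta^2\) does contain subarcs sitting outside the union of the colored disks. I would then argue that for any 2-arc \(a \subset R \cap D^2\) with both endpoints 2-points in such \emph{bad} subarcs, both 2-points must lie in a common lane, which contradicts Lemma \ref{lem:samelane}. The point is that \(\tau_4(\beta')\) is confined to the interior of \(U_2^3\); hence all of the \(\beta'\)-crossings of \(\beta^2\) occur within a single subarc of \(\beta^2 \cap U_2^3\), and the bad subarcs of \(\beta^2\) (lying outside colored disks, and in particular not meeting \(\tau_4(\beta')\)) all sit inside a single lane of \(\beta^2\).

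The main obstacle is the geometric verification of the positional facts --- that \(\beta^3\) and \(\beta^4\) are contained in the union of the colored disks, and that the bad subarcs of \(\beta^2\) all lie in one lane. These are suggested by Figures \ref{fig:boundary_of_Bprime_two_twists_revised_3} and \ref{fig:4_circles_with_crossing_info}, but must be extracted from the explicit description of \(\tau_4(\partial B')\) given in Section \ref{sec:labyrinth} together with the sign conventions on the twist numbers fixed in Section \ref{sec:setting}. Once these positional facts are in hand, the two claims of the lemma follow immediately from the definitions of tracks and of \(j\)-points.
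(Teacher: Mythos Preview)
Your overall strategy matches the paper's, but the key positional claim you plan to verify is actually false, and this breaks both halves of the argument. You assert that \(\beta^3\) and \(\beta^4\) lie entirely in the union of the three colored once-punctured disks, and in particular that all four of their endpoint marked points (positions \(5,6,7,8\)) are colored points. But \(\Lab\) contains exactly five marked points (positions \(4\) through \(8\)), and by Proposition~\ref{prop:lab_structure} only three of them lie in colored disks while the remaining two lie in the twice-punctured disk \(U_2^3\). Hence at least one of the marked points \(5,6,7,8\) sits inside \(U_2^3\), so at least one of \(\beta^3,\beta^4\) has an endpoint there, and these arcs genuinely enter \(U_2^3\). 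The same issue undermines your \(\beta^2\) argument: the ``bad'' subarcs of \(\beta^2\) lying outside the colored disks may include pieces in \(U_2^3\), and those pieces \emph{can} meet \(\tau_4(\beta')\), so the inference that all bad subarcs lie in one lane collapses.

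The missing ingredient is exactly what the paper uses: \(\partial R\) is disjoint from \(U_2^3\). Since \(R\in\mathcal{R}\) is disjoint from \(\beta'\) and hence from \(\tau_4(\partial B')\), its boundary cannot cross into \(U_2^3\) (and cannot lie entirely inside it, as that would force \(R\cup B'\) to be a splitting sphere). So the correct statement is that all lanes of \(\beta^3\) and \(\beta^4\), and all lanes of \(\beta^2\) except the leftmost, are contained in the colored disks \emph{together with} \(U_2^3\); since \(\partial R\) never enters \(U_2^3\), any intersection of \(\partial R\) with those lanes must occur in a colored disk and hence on a track. For \(\beta^2\), any off-track \(2\)-point then lies in the leftmost lane, and Lemma~\ref{lem:samelane} finishes the argument. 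Once you insert this observation about \(\partial R\) avoiding \(U_2^3\), your outline becomes the paper's proof.
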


\begin{proof}
The leftmost lane of \(\beta^2\) is not completely contained in \(\Lab\), but all the other lanes of \(\beta^2\), as well as all the lanes of \(\beta^3\) and \(\beta^4\), are contained in the union of the three colored disks and \(U_2^3\).
\(\partial R\) is disjoint from \(U_2^3\), so it can only intersect these lanes inside the colored disks.
Since every arc component of \(\partial R\) inside a colored disk is, by definition, a track, we conclude that aside from the leftmost lane of \(\beta^2\), \(\partial R\) only intersects the lanes of \(\beta^2\), \(\beta^3\) and \(\beta^4\) in tracks.
Thus all 3- and 4-points lie in tracks.
If both endpoints of a 2-arc lie outside of the tracks, then those endpoints must lie in the leftmost lane of \(\beta^2\), contradicting Lemma \ref{lem:samelane}.
Therefore at least one endpoint of each 2-arc must lie in a track.
\end{proof}

Corresponding to each track color is a particular sequence of numbered points.
Each of these three sequences is symmetric in the sense that the outermost numbered points in the track have the same number, the second-outermost numbered points match each other, the third-outermost numbered points match, etc.
Figure \ref{fig:tracks_with_colors_two_twist_case} depicts the sequence of numbered points along each colored track in the case depicted in Figure \ref{fig:boundary_of_Bprime_two_twists_revised_3}.

\begin{figure}[h!]
\centering
\labellist \small\hair 2pt
\pinlabel {\(\partial R\)} at -11 103
\pinlabel {brown track} at 90 89
\pinlabel {\(3\)} at 41 116
\pinlabel {\(2\)} at 60 116
\pinlabel {\(2\)} at 120.5 116
\pinlabel {\(3\)} at 140 116
\pinlabel {\(\partial R\)} at -11 56
\pinlabel {orange track} at 90 40
\pinlabel {\(3\)} at 41 69
\pinlabel {\(4\)} at 60 69
\pinlabel {\(4\)} at 120.5 69
\pinlabel {\(3\)} at 140 69
\pinlabel {\(\partial R\)} at -11 8
\pinlabel {purple track} at 90 -10
\pinlabel {\(4\)} at 41 21
\pinlabel {\(3\)} at 60 21
\pinlabel {\(3\)} at 120.5 21
\pinlabel {\(4\)} at 140 21
\endlabellist
\includegraphics[scale=.7]{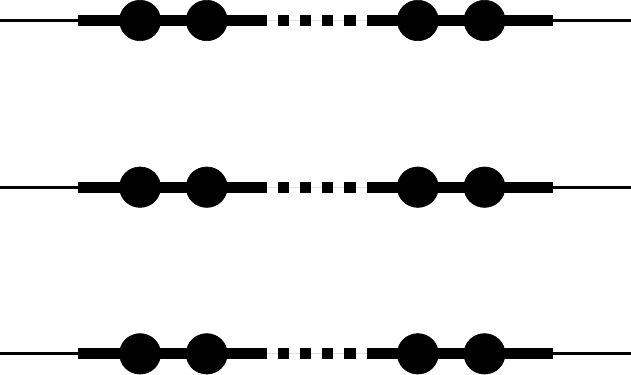}
\vspace{10pt}
\caption{Each colored track contains a sequence of numbered points.
No matter the twist numbers \(\left\{t_i^j\right\}\), we can say what the outermost and second-outermost points will be for each type of track.}
\label{fig:tracks_with_colors}
\end{figure}

Observe Figure \ref{fig:alternating_green_points}, which depicts the possibilities for which directions tracks can go after they intersect the \(\beta\)-arcs.
We see that the outermost numbered points of a brown track are 3-points because the first and last \(\beta\)-arc that the track will intersect is \(\beta^3\).
After intersecting \(\beta^3\), a brown track will loop around and intersect \(\beta^2\), so the second-outermost numbered points of a brown track are 2-points.
Similarly, the outermost numbered points of an orange track are 3-points, and the second-outermost numbered points of an orange track are 4-points.
Finally, the outermost numbered points of a purple track are 4-points, and the second-outermost numbered points of a purple track are 3-points.
This information is summarized in Figure \ref{fig:tracks_with_colors}.
Note that this information about the outermost and second-outermost numbered points does not depend on the set \(\left\{t_i^j\right\}\) of twist numbers.

\begin{figure}[h!]
\centering
\labellist \small\hair 2pt
\pinlabel {\(\beta^2\)} at 80 134
\pinlabel {\(\beta^3\)} at 430 134
\pinlabel {\(\beta^4\)} at 705 134
\pinlabel {\(\beta^2\)} at 80 405
\pinlabel {\(\beta^3\)} at 430 405
\pinlabel {\(\beta^4\)} at 705 405
\pinlabel {\(G^\text{o}\)} at 480 244
\pinlabel {\(G^\text{o}\)} at 455 517
\pinlabel {\(G^\text{b}\)} at 325 6
\pinlabel {\(G^\text{b}\)} at 285 279
\pinlabel {\(G^\text{p}\)} at 675 6
\pinlabel {\(G^\text{p}\)} at 655 279
\endlabellist
\includegraphics[width=\textwidth]{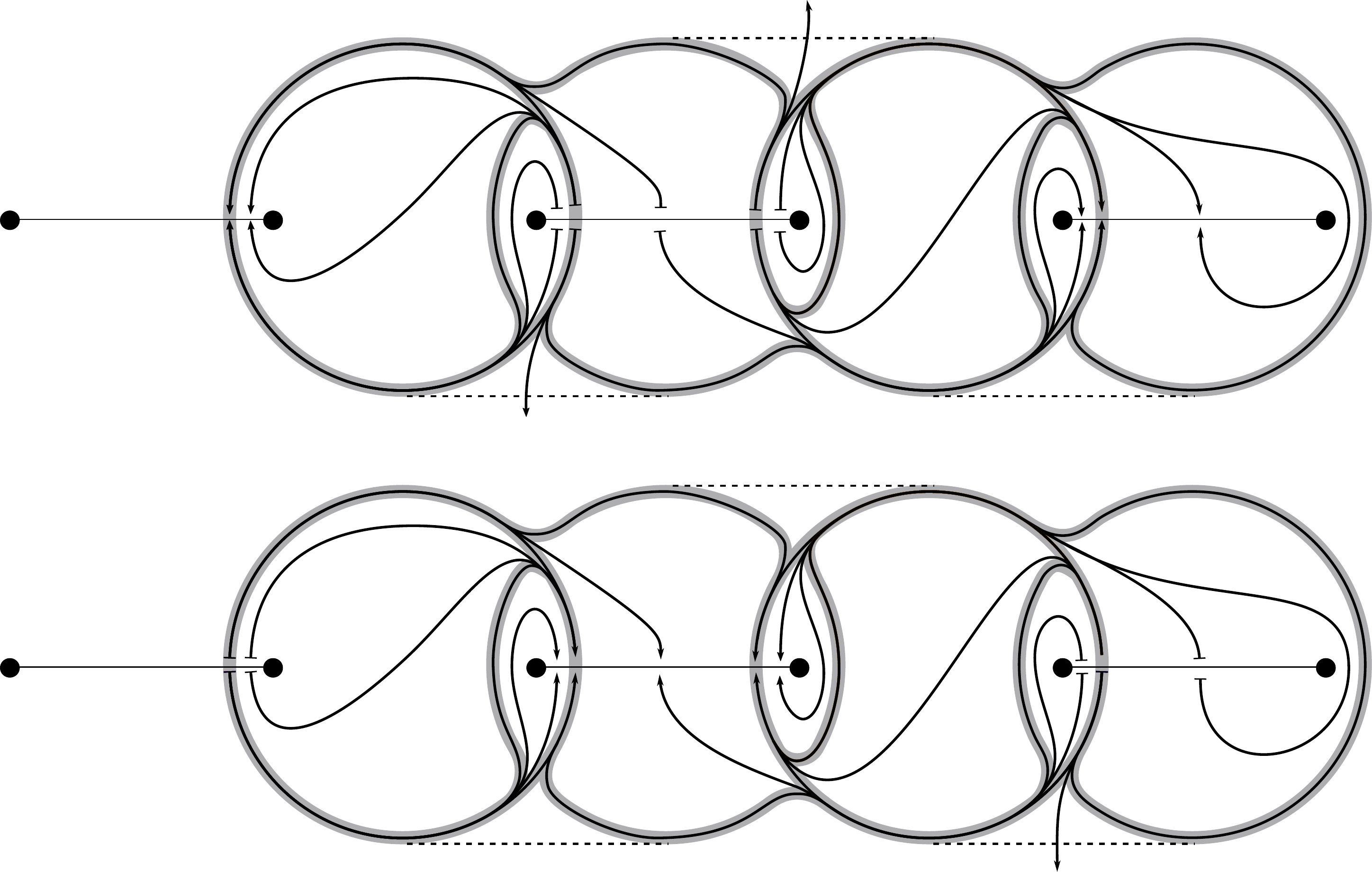}
\caption{If \(\lambda\) is a track, then \(\lambda\) contains a sequence numbered points.
The top picture shows that each 3-point on \(\lambda\) either has a 2-point or a 4-point to both sides, or it has a 2-point or a 4-point to one side and exits through a gate to the other side.
The bottom picture shows that each 2-point on \(\lambda\) has a 3-point to either side, and that each 4-point either has a 3-point to either side, or it has a 3-point to one side and exits through a gate to the other side.}
\label{fig:alternating_green_points}
\end{figure}

\begin{lem}\label{lem:no_connection_on_track}
Assume \(\mathcal{R}\) is nonempty.
No numbered arc can have endpoints which lie on the same track.
\end{lem}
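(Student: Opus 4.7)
The plan is to argue by contradiction, mimicking the boundary-compression argument of Lemma~\ref{lem:samelane}, with an extra modification to handle the fact that the associated sub-arc of $\beta^i$ may now cross $\beta'$.

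I would begin by supposing, for contradiction, that a numbered $i$-arc $a\subset R\cap D^i$ has both endpoints $p_1, p_2$ on a single track $\lambda\subset\partial R\cap\mathcal{D}$, where $\mathcal{D}$ is one of the three colored disks. Let $\lambda'\subset\lambda$ be the sub-arc joining $p_1$ to $p_2$, $\tilde\beta\subset\beta^i$ the sub-arc of $\beta^i$ joining $p_1$ to $p_2$, and $\tilde{D^i}\subset D^i$ the sub-disk of $D^i$ cobounded by $a$ and $\tilde\beta$. Among all such bad arcs I would choose $a$ outermost on $D^i$, so that no other arc of $R\cap D^i$ lies in $\tilde{D^i}$. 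By Lemma~\ref{lem:samelane}, the case where $p_1$ and $p_2$ lie on the same lane of $\beta^i$ is already handled, so the interesting remaining case is when they lie on distinct lanes, i.e.\ $\tilde\beta\cap\beta'$ is nonempty.

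The key new ingredient will be the sub-disk $R'\subset R$ cobounded by $a$ and $\lambda'$, whose boundary arc $\lambda'\subset\partial R$ is disjoint from $\beta'$. I would glue $R'$ to $\tilde{D^i}$ along $a$ to form a disk $E = R'\cup\tilde{D^i}\subset M_+$ with $\partial E = \lambda'\cup\tilde\beta\subset F$, so that $\partial E\cap\beta' = \tilde\beta\cap\beta'$, and then resolve the $\beta'$-crossings of $\partial E$ by a disk-swap surgery with $B'$: each point $q\in\tilde\beta\cap\beta'$ gives rise to a pair of intersections of $\partial E$ with $\partial B'$ lying on either side of $q$, and an outermost-bigon disk-swap argument (using that $B'\subset M_-$ is disjoint from $\mathfrak{D}\subset M_+$) would cancel these crossings without increasing the $\mathfrak{D}$-intersection count. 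After isotopy to minimal position, the result would be a disk $R^\ast\subset M_+$ with $\partial R^\ast\cap\beta' = \emptyset$ and $|R^\ast\cap\mathfrak{D}|<|R\cap\mathfrak{D}|$, the strict inequality coming from the outermost choice of $a$.

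Finally I would check that $R^\ast$ is either (i) a nontrivial red compressing disk, in which case $R^\ast\in\mathcal{R}$ contradicts the minimality of $R$; (ii) trivial, in which case it produces an isotopy of $R$ in $\mathcal{R}$ strictly reducing $|R\cap\mathfrak{D}|$, again contradicting minimality; or (iii) isotopic to $B$, in which case an argument like the one at the end of the proof of Lemma~\ref{lem:samelane} shows $R$ itself is a band-sum of parallel copies of $B$, hence trivial, contradicting the fact that $R$ is a compressing disk. I expect the hardest step to be executing the disk-swap surgery with $B'$ cleanly, namely verifying that the crossings can be paired up and cancelled without introducing new intersections with $\mathfrak{D}$; this will require a careful geometric analysis exploiting that $B'$ lies on the opposite side of $F$ from $\mathfrak{D}$.
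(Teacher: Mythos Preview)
Your approach is far more elaborate than the paper's, and the step you correctly flagged as hardest does not go through.

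The paper's proof is a short parity argument. Along any track the numbered points alternate between 3-points and non-3-points (this is read off directly from Figure~\ref{fig:alternating_green_points}). Hence any two same-numbered points on a single track have an \emph{odd} number of numbered points strictly between them along the track. If a numbered arc \(\lambda\subset R\) joined two such points, \(\lambda\) would cut \(R\) into two subdisks, one of which meets \(\partial R\) exactly in the track-subarc between the endpoints; the numbered points on that subarc would then have to be matched in pairs by numbered arcs disjoint from \(\lambda\), which is impossible for an odd set.

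Your disk-swap surgery with \(B'\), by contrast, cannot be carried out. Near each point of \(\tilde\beta\cap\beta'\), the loop \(\partial E\) enters the disk in \(F\) bounded by \(\partial B'\), separates its two marked points (the endpoints of \(\beta'\)), and exits; so there is no innermost bigon between \(\partial E\) and \(\partial B'\) to cancel, and Proposition~\ref{prop:BprimeandDminpos} confirms these crossings are already minimal. Using \(B'\) itself does not help: \(B'\subset M_-\) while \(E\subset M_+\), so \(E\cap B'=\partial E\cap\partial B'\) is a finite set of points in \(F\) and there is no subdisk of \(B'\) along which to boundary-compress \(E\). Replacing subarcs of \(\partial E\) by subarcs of \(\partial B'\) changes the isotopy class of \(\partial E\) in the punctured sphere, with no guarantee that the new curve bounds above \(F\). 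Even granting an \(R^\ast\), your trichotomy has gaps: after such surgery \(R^\ast\) is no longer related to \(R\) by a single boundary-compression, so neither the ``trivial \(\Rightarrow\) isotopy of \(R\)'' step nor the ``\(R^\ast\simeq B\Rightarrow R\) is a band sum of parallel copies of \(B\)'' step carries over from Lemma~\ref{lem:samelane}. (A smaller issue: choosing \(a\) outermost among \emph{bad} arcs does not make \(\tilde{D^i}\) free of all arcs of \(R\cap D^i\), only of bad ones.)
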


\begin{proof}
Since \(\partial R\) must intersect at least one of the gates, it follows that there exists at least one track in \(\partial R\).
Refer again to Figure \ref{fig:alternating_green_points};
In the top picture, we see that after intersecting \(\beta^3\), a track will always either intersect \(\beta^2\) or \(\beta^4\) or leave the labyrinth.
In the bottom picture, we see that after intersecting \(\beta^2\) or \(\beta^4\), a track will always either intersect \(\beta^3\) or leave the labyrinth.
This means that in each sequence of numbered points in a track, every other numbered point is a 3-point.

Suppose two numbered points from the same track are connected in \(R\) by a numbered arc \(\lambda\).
Since the two endpoints of \(\lambda\) must have the same number, and since every other numbered point in the track is a 3-point, there must be an odd number of numbered points between the endpoints of \(\lambda\).
But this leads to a contradiction: numbered arcs never intersect each other, so it is impossible to pair up the numbered points between \(\lambda\)'s endpoints with disjoint numbered arcs.
\end{proof}

\begin{lem}\label{lem:threecolors}
If \(\mathcal{R}\) is nonempty, \(\partial R\) contains tracks of all three colors.
\end{lem}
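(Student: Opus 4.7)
My plan is to argue by contradiction in close parallel with the proof of Corollary \ref{cor:onegate}. Assume that $\partial R$ is missing tracks of some color. From Corollary \ref{cor:onegate} together with $\partial R\cap \tau_4(\partial B')=\emptyset$ (which holds because $R\in\mathcal{R}$), at least one color of track is present, so I only need to rule out the cases where exactly one or exactly two colors appear.

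If a color $c$ is missing, then $\partial R$ is disjoint from $\partial B'$ and from $G^c$, so it lies in a single component of $F\setminus(\partial B'\cup\bigcup_{c'\text{ missing}}G^{c'})$. Using Proposition \ref{prop:lab_structure}, I can describe each such component explicitly: either $A'=F\setminus(\text{inside of }\partial B')$ (with three marked points), a colored once-punctured disk, or a union of adjacent regions of \(\Lab\) joined across the remaining gate(s). Since $\partial R$ must cross at least one gate, the first two options are impossible, so $\partial R$ is confined to a small sub-disk of $F$ containing at most four marked points --- three in the one-color case, four in the two-color case.

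For each sub-case I would then check that every possible isotopy class of $\partial R$ in $F\setminus L$ leads to a contradiction via some combination of: (i) $\partial R$ must be essential, i.e.\ bound a disk with at least two marked points on each side; (ii) $\partial R\not\simeq\partial B$ in $F\setminus L$ (else $R\simeq B$, contradicting $[R]\in\mathcal{C}_1$); (iii) $\partial R\not\simeq\tau_4(\partial B')$ (else $R\cup B'$ is a splitting sphere for the non-split link $L$, exactly as in the proof of Corollary \ref{cor:onegate}); and (iv) $\partial R\cap(\gamma^2\cup\gamma^3)\neq\emptyset$ (Lemma \ref{lem:Cintersectsgamma}). In the one-color sub-cases, the confined sub-disk is small enough that these constraints conflict directly with how $\gamma^2,\gamma^3$ sit relative to the brown, orange, and purple regions.

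The main obstacle will be the two-color sub-cases, where $\partial R$ has more room and the crude isotopy classification leaves more options open. To eliminate these I expect to need the fine combinatorial structure of the tracks (brown of the form $3,2,\dots,2,3$; orange of the form $3,4,\dots,4,3$; purple of the form $4,3,\dots,3,4$), together with Lemmas \ref{lem:no_connection_off_of_track} and \ref{lem:no_connection_on_track} and the minimality of $|R\cap\mathfrak{D}|$. In analogy with the proof of Lemma \ref{lem:no_connection_on_track}, a parity or pairing argument on the numbered points --- for instance, attempting to pair $4$-points (which live only on orange and purple tracks) via $4$-arcs that must connect \emph{distinct} tracks --- should produce the final contradiction when one of these two colors is absent.
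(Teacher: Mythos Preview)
Your outline has real gaps that would prevent it from becoming a proof as written.

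First, the marked–point counts are off. The complement of \(\Lab\) in \(F\) is a neighborhood of \(\beta^1\cup\gamma^1\) and carries three marked points, so when you pass from \(\Lab\) to \(F\) the relevant component picks up those three points. Concretely, in the one-color case the component of \(F\setminus(\partial B'\cup G^{c_1}\cup G^{c_2})\) containing the remaining gate is \((F\setminus\Lab)\cup A\cup(\text{that colored disk})\), which has \(3+0+1=4\) marked points, not three; in the two-color case you get five, not four. Your statement that ``\(A'=F\setminus(\text{inside of }\partial B')\)'' has three marked points is simply wrong (it has six); you appear to be conflating \(F\) with \(\Lab\).

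Second, even with corrected counts, the plan to ``check every possible isotopy class of \(\partial R\)'' does not terminate: a disk with four marked points already supports infinitely many isotopy classes of essential simple closed curves, so the phrase ``these constraints conflict directly'' is doing work you have not supplied. Constraint (iv) alone will not rescue this, since \(\gamma^2\) (respectively \(\gamma^3\)) can lie inside the region you are confined to for some of the one-color cases.

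Third, a small but consequential slip: it is \emph{not} true that \(4\)-points live only on orange and purple tracks. For some choices of twist numbers brown tracks also contain \(4\)-points; this is exactly why the paper's ``no purple'' argument splits into three sub-cases.

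The paper's proof avoids all of this by working entirely with outermost numbered arcs in \(R\), never attempting a topological classification of \(\partial R\). Take an arc outermost in \(R\) among all numbered arcs; its endpoints are adjacent same-numbered points on \(\partial R\). By Lemmas \ref{lem:no_connection_off_of_track} and \ref{lem:no_connection_on_track} and the alternation of numbered points along each track, these endpoints must be the \emph{outermost} points of two adjacent tracks. Now use only the outermost labels: brown and orange tracks begin and end with \(3\), purple with \(4\). If brown is absent, adjacent outermost points of the remaining colors cannot match unless both tracks are the same color, and then the two points lie in the same lane, contradicting Lemma \ref{lem:samelane}. The same one-line argument handles ``no orange''. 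Only ``no purple'' needs the finer case analysis on an outermost \(4\)-arc that you anticipated; there the paper distinguishes whether the \(4\)-arc joins brown--brown, orange--orange, or brown--orange tracks and derives a contradiction in each case from Lemmas \ref{lem:samelane} and \ref{lem:no_connection_on_track}. Your two-color sketch is heading toward this last case, but you have not noticed that the other two absences are dispatched almost immediately by looking at outermost labels rather than by confining \(\partial R\) topologically.
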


\begin{proof}
Consider a numbered arc \(\lambda\) which is outermost in \(R\).
Being outermost guarantees that \(\lambda\)'s endpoints are adjacent same-numbered points on \(\partial R\).
Since numbered points in each track alternate between 3-points and 2- or 4-points, there are only three ways to have adjacent same-numbered points on \(\partial R\):
1) The endpoints of \(\lambda\) are consecutive 2-points which do not lie in tracks, which would contradict Lemma \ref{lem:no_connection_off_of_track}.
2) The endpoints of \(\lambda\) are a 2-point off of a track, and an outermost numbered point on a track, which is impossible since no outermost point on any track is a 2-point.
3) The endpoints of \(\lambda\) are outermost same-numbered points of adjacent tracks.

Suppose \(\partial R\) contains no brown tracks.
Since the outermost numbered points of orange tracks are 3-points, and the outermost numbered points of purple tracks are 4-points, \(\lambda\) must have endpoints which are adjacent outermost numbered points of two adjacent same-colored tracks, and we reach a contradiction by Lemma \ref{lem:samelane} since these two endpoints lie in the same lane.
Therefore \(\partial R\) must contain at least one brown track.
By a similar argument, \(\partial R\) contains at least one orange track.

Suppose \(\partial R\) contains no purple tracks.
Consider a numbered 4-arc \(\zeta\) of \(\partial R\cap D^4\) which is outermost in \(R\) (i.e., outermost among the set of 4-arcs).
\(\zeta\) cuts \(\partial R\) into two components.
Since \(\zeta\) is outermost among the 4-arcs, one of these components contains no 4-points.
Call this piece the \textbf{primary} piece. 
There are three possibilities;
we will derive a contradiction from each one.

\begin{description}
\item[Case 1] \(\zeta\) connects two 4-points in two brown tracks.
\item[Case 2] \(\zeta\) connects two 4-points in two orange tracks.
\item[Case 3] \(\zeta\) connects a 4-point of a brown track and a 4-point of an adjacent orange track.
\end{description}

\begin{figure}[h!]
\centering
\labellist \small\hair 2pt
\pinlabel {\(R\)} at 15 80
\pinlabel {\(\zeta\)} at 100 67
\pinlabel {brown track} [B] at 100 -10
\pinlabel {brown track} [B] at 335 -10
\pinlabel {\(4\)} at 60 14
\pinlabel {\(2\)} at 167 14
\pinlabel {\(3\)} at 189 14
\pinlabel {\(3\)} at 267 14
\pinlabel {\(2\)} at 287 14
\pinlabel {\(4\)} at 391 14
\pinlabel {\(\partial R\)} at -13 7
\pinlabel {\(\exists\lambda\)} at 246 44
\endlabellist
\includegraphics[scale=.65]{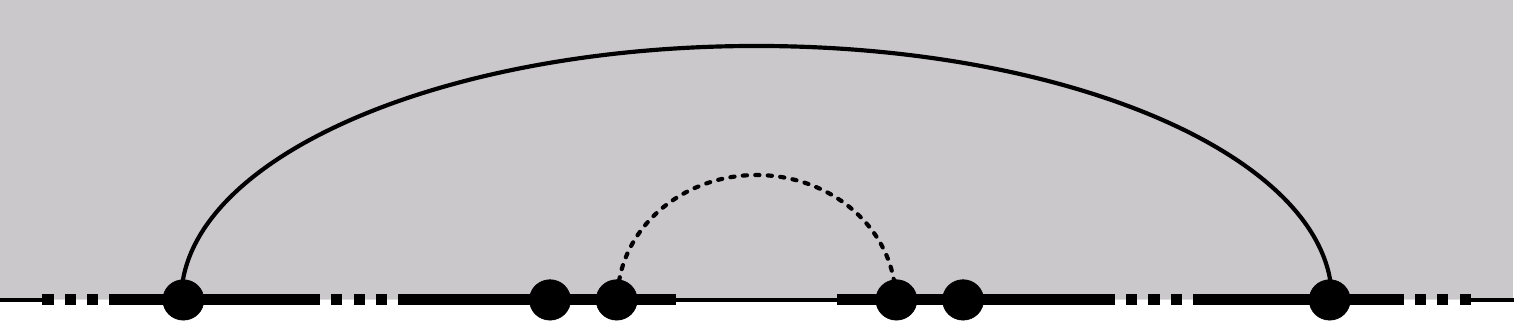}
\caption{Case 1: The outermost 4-arc \(\zeta\) connects 4-points in adjacent brown tracks.}
\label{fig:adjacent_blue_pts_in_brown_tracks}
\end{figure}

We start with Case 1.
Note that for this to be possible, the set \(\left\{t_i^j\right\}\) of twist numbers must be such that brown tracks contain 4-points, which is only true for some sets of twist numbers.
In contrast, orange and purple tracks necessarily contain 4-points, no matter the twist numbers.
This means that since the primary piece of \(\partial R\) cannot contain any 4-points, it cannot contain any complete track of any color.
Then the 4-points which are the endpoints of \(\zeta\) must lie in adjacent brown tracks.
See Figure \ref{fig:adjacent_blue_pts_in_brown_tracks}.
Consider all of the numbered points in the primary piece of \(\partial R\).
They must be paired with numbered arcs, and there must be an outermost such arc \(\lambda\).
\(\lambda\) connects same-numbered points adjacent on \(\partial R\).
If there are no 2-points between the brown tracks in the primary piece of \(\partial R\), then the only pair of adjacent same-numbered points in the primary part of \(\partial R\) are an outermost 3-point of one brown track, and an outermost 3-point of the other brown track.
But these points are the same lane of \(\beta^3\), which contradicts Lemma \ref{lem:samelane}.
If there is exactly one 2-point between the brown tracks in the primary piece of \(\partial R\), then there does not exist a pair of adjacent same-numbered points in the primary part of \(\partial R\) to be the endpoints of \(\lambda\).
If there is more than one 2-point between the brown tracks in the primary piece of \(\partial R\), then the only pair(s) of adjacent same-numbered points in the primary part of \(\partial R\) are pairs of these 2-points, but they cannot be the endpoints of \(\lambda\) by Lemma \ref{lem:no_connection_off_of_track}.
We conclude Case 1 is not possible.

\begin{figure}[h!]
\centering
\labellist \small\hair 2pt
\pinlabel {\(R\)} at 15 80
\pinlabel {\(\zeta\)} at 100 68
\pinlabel {orange} [B] at 38 -30
\pinlabel {track} [B] at 38 -45
\pinlabel {\begin{rotate}{90}\(\left[\begin{tabular}{c}• \\ • \\ • \\ • \\ \end{tabular} \right.\)\end{rotate}} at 43 -22
\pinlabel {\(4\)} at 53 -7
\pinlabel {\(3\)} at 65 -7
\pinlabel {\(2\)} at 119 -7
\pinlabel {either \(\exists\lambda\)} at 130 38
\pinlabel {brown} [B] at 130 -30
\pinlabel {track} [B] at 130 -45
\pinlabel {\begin{rotate}{90}\(\left[\begin{tabular}{c}• \\ • \\ • \\ • \\ • \\ \end{tabular} \right.\)\end{rotate}} at 133 -22
\pinlabel {or \(\exists\lambda\)} at 201 30
\pinlabel {2-points} [B] at 223 -30
\pinlabel {not on a track} [B] at 223 -45
\pinlabel {\begin{rotate}{90}\(\left[\begin{tabular}{c}• \\ • \\ • \\ • \\ \end{tabular} \right.\)\end{rotate}} at 225 -22
\pinlabel {\(2\)} at 194 -7
\pinlabel {\(2\)} at 207 -7
\pinlabel {\(2\)} at 242 -7
\pinlabel {\(3\)} at 276 -7
\pinlabel {\(2\)} at 288 -7
\pinlabel {or \(\exists\lambda\)} at 263 38
\pinlabel {\(2\)} at 142 -7
\pinlabel {\(2\)} at 313 -7
\pinlabel {\(3\)} at 325 -7
\pinlabel {brown} [B] at 303 -30
\pinlabel {track} [B] at 303 -45
\pinlabel {\begin{rotate}{90}\(\left[\begin{tabular}{c}• \\ • \\ • \\ • \\ \end{tabular} \right.\)\end{rotate}} at 305 -22
\pinlabel {or \(\exists\lambda\)} at 340 51
\pinlabel {\(3\)} at 356 -7
\pinlabel {\(2\)} at 368 -7
\pinlabel {brown} [B] at 383 -30
\pinlabel {track} [B] at 383 -45
\pinlabel {\begin{rotate}{90}\(\left[\begin{tabular}{c}• \\ • \\ • \\ • \\ \end{tabular} \right.\)\end{rotate}} at 385 -22
\pinlabel {\(3\)} at 434 -7
\pinlabel {\(4\)} at 446 -7
\pinlabel {orange} [B] at 461 -30
\pinlabel {track} [B] at 461 -45
\pinlabel {\begin{rotate}{90}\(\left[\begin{tabular}{c}• \\ • \\ • \\ • \\ \end{tabular} \right.\)\end{rotate}} at 466 -22
\pinlabel {\(\partial R\)} at -13 7
\endlabellist
\includegraphics[scale=.65]{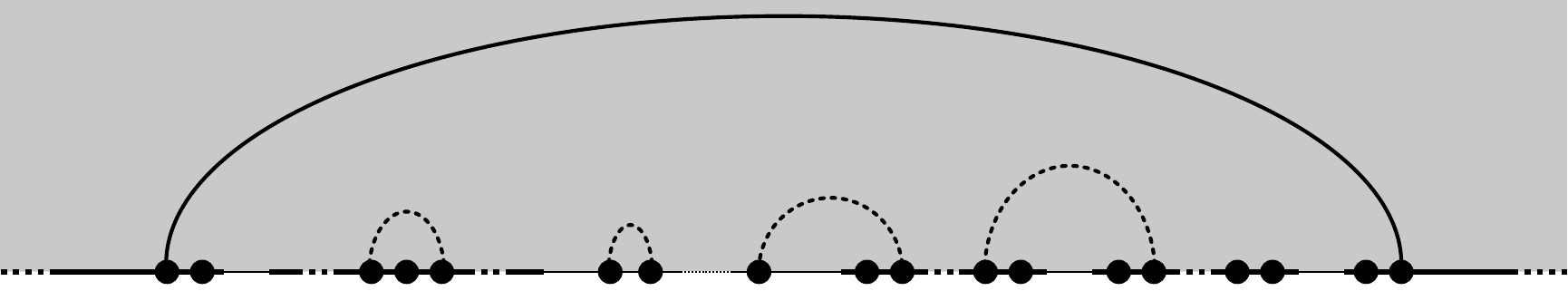}
\vspace{25pt}
\caption{Case 2: The outermost 4-arc \(\zeta\) connects 4-points in adjacent orange tracks. If brown tracks do not contain 4-points, then there may be any number of brown tracks between these two orange tracks (and there may be any number of 2-points between each pair of tracks).}
\label{fig:adjacent_blue_pts_in_orange_tracks}
\end{figure}

Next we consider Case 2, in which \(\zeta\) connects two 4-points in two orange tracks.
Brown tracks may or may not contain 4-points, depending on the twist numbers; if they do not, then the primary piece of \(\partial R\) may contain any number of brown tracks.
If there are zero brown tracks in the primary piece of \(\partial R\), then the proof is similar to Case 1.
Suppose the primary piece of \(\partial R\) contains at least one brown track.
Then since the second-outermost numbered points of brown tracks are 2-points, there are at least two 2-points in the primary piece of \(\partial R\).
Since numbered arcs cannot intersect each other, they cannot intersect \(\zeta\) in particular, so all of the 2-points in the primary piece must be paired with each other by a set \(S\) of 2-arcs.
Consider an outermost 2-arc \(\lambda\) in \(S\) (i.e., outermost with respect to the other 2-arcs of \(S\)).
\(\lambda\)'s endpoints are a pair of 2-points with no other 2-points between them.
There are four ways this can happen (depicted in Figure \ref{fig:adjacent_blue_pts_in_orange_tracks}), though each one leads to a contradiction:
1) The pair of 2-points lies in a single brown track, contradicting Lemma \ref{lem:no_connection_on_track}.
2) Both 2-points are off of tracks, contradicting Lemma \ref{lem:samelane}.
3) The pair consists of a second-outermost numbered point in a brown track and a 2-point not contained in a track.
But then \(\lambda\) would separate a single 3-point from all the other numbered points of \(\partial R\), so the numbered arc corresponding to that 3-point would have to intersect \(\lambda\).
4) The pair consists of second-outermost numbered points in adjacent brown tracks, points which lie in the same lane, again contradicting Lemma \ref{lem:samelane}.
Therefore Case 2 is impossible.


\begin{figure}[h!]
\centering
\labellist \small\hair 2pt
\pinlabel {\(R\)} at 15 80
\pinlabel {\(\zeta\)} at 100 75
\pinlabel {brown track} [B] at 100 -25
\pinlabel {orange track} [B] at 290 -25
\pinlabel {\(4\)} at 53 -7
\pinlabel {\(3\)} at 140 -7
\pinlabel {\(2\)} at 160 -7
\pinlabel {\(3\)} at 179 -7
\pinlabel {\(3\)} at 259 -7
\pinlabel {\(4\)} at 277 -7
\pinlabel {\(3\)} at 298 -7
\pinlabel {\(\partial R\)} at -13 7
\endlabellist
\includegraphics[scale=.65]{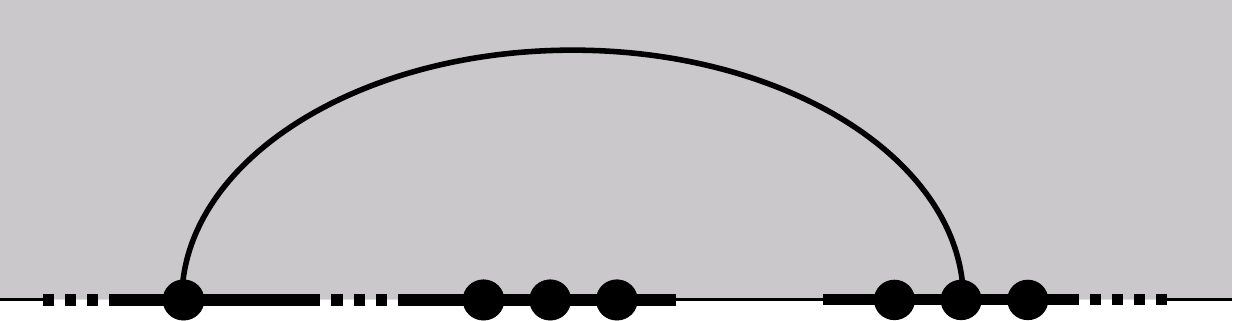}
\vspace{15pt}\caption{Adjacent brown and orange tracks with 4-points connected by the outermost 4-arc \(\zeta\)}
\label{fig:adjacent_blue_pts_in_orange_and_brown}
\end{figure}

Case 3:  \(\zeta\) connects a 4-point of a brown track and a 4-point of an adjacent orange track, as in Figure \ref{fig:adjacent_blue_pts_in_orange_and_brown}.
Again, the numbered points in the primary piece of \(\partial R\) must all be connected in pairs.
Recall that in every track the numbered points alternate between 3-points and 2- or 4-points, so in the intersection of the primary piece and the brown track, there are at least two 3-points, one on either side of the second-outermost 2-point.
In the orange track, the second-outermost numbered point is a 4-point, so it must be an endpoint of \(\zeta\), so there is exactly one other numbered point in the intersection of the primary piece and the orange track: an outermost 3-point.
Since the primary piece of the brown track contains strictly more than one 3-point, and the primary piece of the orange track contains exactly one 3-point, it is impossible to pair up the 3-points points in such a way that no pair lies same track, so Case 3 contradicts Lemma \ref{lem:no_connection_on_track} and is therefore impossible.
We conclude \(\partial R\) must contain purple tracks, finishing the proof of Lemma \ref{lem:threecolors}.
\end{proof}

\begin{figure}[h!]
\centering
\labellist \small\hair 2pt
\pinlabel {\(\partial B'\)} at 255 32
\pinlabel {\(\beta'\)} at 220 10
\pinlabel {\(\partial B'\)} at  55 60
\pinlabel {\(\beta'\)} at  60 23
\pinlabel {\(\partial B'\)} at  145 12
\pinlabel {
\begin{rotate}{30}
colored point
\end{rotate}
} at 158 33
\pinlabel {route} at  265 212
\pinlabel {escape} at  265 227
\pinlabel {\(\tilde{b}\)} at 208 107
\pinlabel {\(\beta^i\)} at 55 107
\pinlabel {\(D^i\)} at 135 280
\pinlabel {\(\Lab\)} at 535 280
\pinlabel {\(\beta'\)} at 648 179
\pinlabel {\(\partial B'\)} at 606 141
\pinlabel {\(\tilde{b}\)} at 688 214
\endlabellist
\includegraphics[scale=.5]{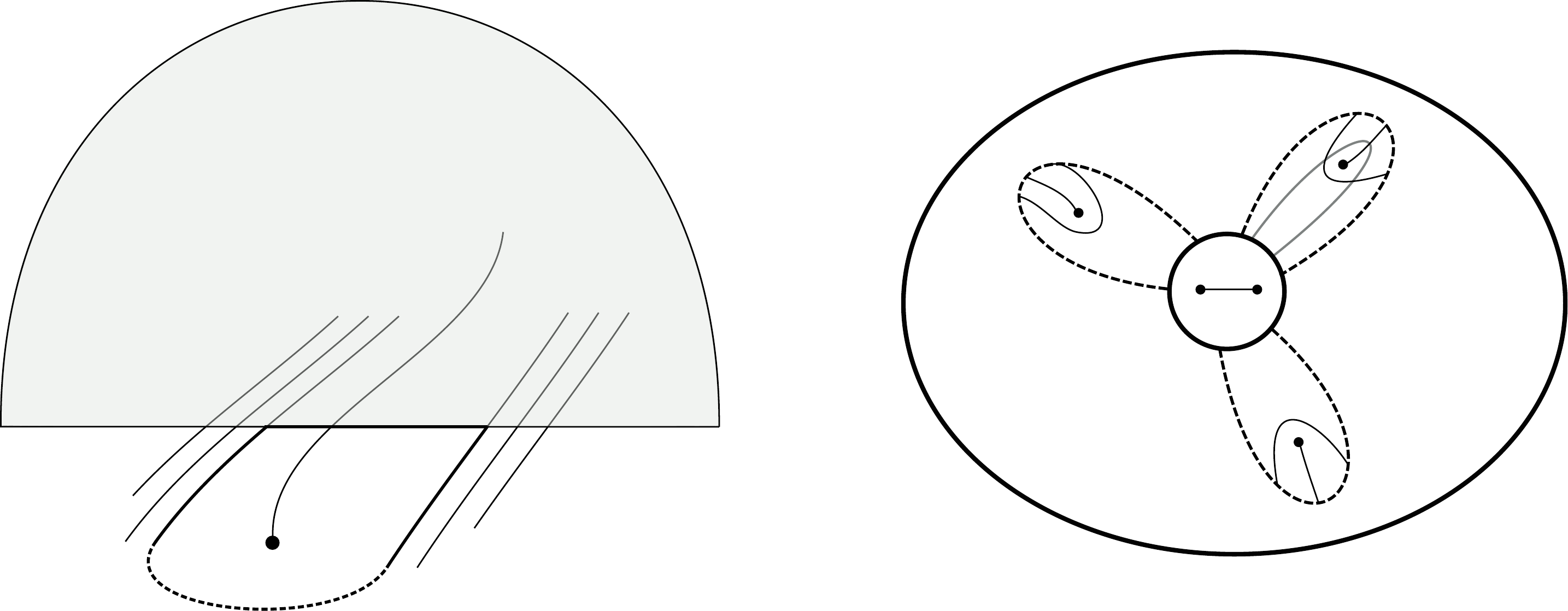}
\caption{On the left, \(\tilde{b}\) is the bold horizontal segment of \(\beta^i\).
On the right, \(\tilde{b}\) is the gray arc.
Also depicted on the right are the three escape routes, and the three corresponding tracks.}
\label{fig:escape_route_intersects_b}
\end{figure}

\begin{lem}\label{lanes_intersect_tracks}
Assume \(\mathcal{R}\) is nonempty.
Let \(b\subset\beta^i\) be a lane whose endpoints are interior points of \(\beta'\).
Then a track must intersect \(b\).
\end{lem}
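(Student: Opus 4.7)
The plan is to close $b$ into a simple loop on the sphere $F$ by appending the subarc $\beta''$ of $\beta'$ with the same endpoints as $b$, show that one side of the resulting loop contains a colored marked point of the labyrinth, and then use a track of that color to produce an intersection with $b$.

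Because both endpoints of $b$ lie in the interior of $\beta'$, the subarc $\beta''$ avoids the two $U_2^3$-marked points that sit at the ends of $\beta'$, and $b$ itself (being in the interior of $\beta^i$) avoids marked points. Hence $\ell := b \cup \beta''$ is a simple closed curve in $F \setminus \{\text{marked points}\}$, bounding two disks in $F \cong S^2$. Label these $\Delta$ and $\Delta^c$ so that the two $U_2^3$-marked points lie in $\Delta^c$.

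The key geometric claim is that $\Delta$ contains at least one of the three colored marked points. First, $\Delta$ cannot be free of marked points: otherwise $b$ and $\beta''$ would cobound a bigon, contradicting the minimal position of $\beta^i$ and $\beta'$ (a Bigon Criterion argument in the spirit of Proposition~\ref{prop:BprimeandDminpos}). Second, although $\Delta$ could in principle enclose a marked point outside $\Lab$, the constraint that $\ell \subset \Lab$ while the three marked points outside $\Lab$ sit together in a single collar of $\beta^1 \cup \gamma^1$ should force any such enclosure to capture a colored marked point as well. Verifying this requires a case analysis of how $b$ crosses $\partial B'$ and threads between the gates, using the partition of $\Lab$ supplied by Proposition~\ref{prop:lab_structure}; it is the most delicate step and the main obstacle I anticipate.

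Once a colored marked point $p \in \Delta$ is identified, let $G$ be its gate and $D_p$ the once-punctured colored disk of $\Lab$ containing $p$. By Lemma~\ref{lem:threecolors} there is at least one track in $D_p$; using the nested-chord structure of the tracks (they are pairwise disjoint arcs with endpoints on $G$), I select a track $\lambda$ that encircles $p$, i.e., separates $p$ from the $\partial B'$-arc portion of $\partial D_p$. Since $D_p$ is disjoint from $U_2^3$ by Proposition~\ref{prop:lab_structure} and $\beta'' \subset U_2^3$, the arc $\lambda$ is disjoint from $\beta''$, so any crossing of $\lambda$ with $\ell$ must occur on $b$. On the other hand, the endpoints of $\lambda$ lie on $G \subset \Lab$ outside $U_2^3$, while $p \in \Delta$; a standard separation argument on the sphere $F$ then forces $\lambda$ to cross $\ell$, giving the required track intersection with $b$.
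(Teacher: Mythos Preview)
Your proposal has a genuine gap: the central claim --- that the disk \(\Delta\) contains a colored marked point --- is explicitly left unproved. You defer it to an anticipated ``case analysis of how \(b\) crosses \(\partial B'\) and threads between the gates'' and call it ``the main obstacle I anticipate,'' but this step is the entire content of the lemma; without it nothing has been shown. A secondary weakness is the closing ``standard separation argument on the sphere \(F\)'': to force the track \(\lambda\) across \(\ell\) you would need to know that the gate \(G\) (hence the endpoints of \(\lambda\)) lies in \(\Delta^c\), and nothing you have written establishes that.

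The paper avoids both issues by localizing instead of globalizing. Rather than closing \(b\) into a loop on all of \(F\), it \emph{removes} the \(U_2^3\)-portion, setting \(\tilde b = b\setminus (U_2^3)^\circ\). Since lanes with interior-of-\(\beta'\) endpoints miss the annular piece \(A\) of \(\Lab\) (Proposition~\ref{prop:lab_structure}), \(\tilde b\) is an arc properly embedded in a single colored disk, with both endpoints on the \(\partial B'\)-portion of that disk's boundary. Proposition~\ref{prop:BprimeandDminpos} then rules out a bigon between \(\tilde b\) and \(\partial B'\), so \(\tilde b\) necessarily separates the colored puncture from the gate. Hence the escape route crosses \(\tilde b\), and every track of that color --- being the frontier of a neighborhood of the escape route in the once-punctured disk --- crosses \(\tilde b\subset b\) as well. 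The ``delicate case analysis'' you anticipated never arises once you work inside a single colored disk rather than on the whole sphere.
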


\begin{proof}
Recall that \(\partial B'\) and the three gates cut \(\Lab\) into five components, one of which is the annulus \(A\), and observe that no lane of \(\beta^2,\beta^3\), or \(\beta^4\) intersects \(A\) except for the leftmost lane of \(\beta^2\).
By definition, \(b\) cannot be that leftmost lane, so \(b\) is properly contained in the union of \(U_i^j\) and the three colored disks.
Let \(\tilde{b}=b\backslash ({U_2^3}^\circ)\) as in the first picture of Figure \ref{fig:escape_route_intersects_b}.
\(\tilde{b}\) is an arc properly contained in a colored disk, with endpoints on \(\partial U_2^3\).
\(\tilde{b}\) cannot cobound a bigon with \(\partial B'\) because that would contradict Proposition \ref{prop:BprimeandDminpos}, so \(\tilde{b}\) must be an arc cutting out a punctured disk from a colored disk, as in the second picture of Figure \ref{fig:escape_route_intersects_b}.
Suppose the colored disk in question is brown.
By definition, the brown escape route does not intersect \(\partial B'\), so the only way for it to connect the brown point with the brown gate is to intersect \(\tilde{b}\).
Thus if \(\tilde{b}\) is contained in the brown disk, it must intersect the brown escape route.
Every brown track is a frontier in the brown disk of the brown escape route, so every brown track must also intersect \(\tilde{b}\). 
Since the color was arbitrary, the lemma is proved. 
\end{proof}

\begin{prop}\label{prop:R_empty_or_L_perturbed}
If \(L\) is not perturbed at \({\alpha^4}'\), then \(\mathcal{R}\) is empty.
\end{prop}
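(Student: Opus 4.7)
The plan is to prove the contrapositive: assuming $\mathcal{R}$ is nonempty, I construct a bridge disk $E$ for an upper bridge arc which, together with ${D^4}'$, exhibits a perturbation of $L$ at ${\alpha^4}'$. Let $R\in\mathcal{R}$ be the minimizer from the set-up. By Lemma \ref{lem:threecolors}, $\partial R$ contains tracks of all three colors, and by Lemmas \ref{lem:no_connection_off_of_track} and \ref{lem:no_connection_on_track} every $3$- and $4$-point of $\partial R\cap\mathfrak{D}$ lies on a track and no numbered arc has both endpoints on a single track. As in the proof of Lemma \ref{lem:threecolors}, an outermost numbered arc $\lambda\subset R\cap D^k$ must join the outermost same-numbered points of two adjacent tracks.

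Write $\tilde D\subset D^k$ for the subdisk bounded by $\lambda$ and a subarc $\tilde\beta\subset\beta^k$, write $\tilde D^{\mathrm{other}}$ for the complementary subdisk of $D^k$ (containing $\alpha^k$), and write $R_\lambda$ for the component of $R\setminus\lambda$ outermost with respect to $\lambda$. Then $E:=R_\lambda\cup\tilde D^{\mathrm{other}}$ is a bridge disk for $\alpha^k$ whose boundary arc on $F$ is the concatenation of an arc of $\partial R$ (disjoint from $\beta'$) with two short subarcs of $\beta^k$. By Figure \ref{fig:tracks_with_colors}, outermost $4$-points appear only on purple tracks and outermost $3$-points only on brown or orange tracks, so $\lambda$ is either a $3$-arc joining outermost $3$-points on a pair of brown/orange tracks, or a $4$-arc joining outermost $4$-points on two adjacent purple tracks.

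To finish, I would rule out each of the brown--brown, brown--orange, and orange--orange $3$-arc subcases by boundary-compressing $R$ along $\tilde D$ and combining the resulting pieces in the spirit of Lemmas \ref{lem:samelane} and \ref{lem:threecolors}: either the two resulting disks include a trivial piece (contradiction), or they can be recombined with parallel copies of $B$ into a disk in $\mathcal{R}$ with strictly fewer intersections with $\mathfrak{D}$, contradicting the minimality of $R$. The surviving case is then $\lambda$ being a $4$-arc, which forces $k=4$; the position of the purple marked point and purple gate $G^{\mathrm{p}}$ (Proposition \ref{prop:lab_structure} together with Figure \ref{fig:4_circles_with_crossing_info}) identifies the two outermost lanes of $\beta^4$ hosting the endpoints of $\tilde\beta$ with lanes adjacent to an endpoint of ${\alpha^4}'$. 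Consequently $\alpha^4$ and ${\alpha^4}'$ share an endpoint on $F$, and after arranging $E$ in minimal position with ${D^4}'$ (removing loop intersections by irreducibility of $S^3\setminus\eta(L)$ and arc intersections by innermost-disk and outermost-arc arguments on ${D^4}'$), $E$ and ${D^4}'$ meet in precisely one marked point of $L$, which is the promised perturbation.

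The main obstacle is the geometric bookkeeping. Ruling out the three $3$-arc subcases requires a careful understanding of how $\tilde\beta$ sits inside two adjacent colored disks and of how the boundary-compression products interact with $\beta'$, $B$, and the other bridge disks; one has to verify that either they are trivial in $S^3\setminus\eta(L)$ or they can be spliced with a bridge disk across the remaining arcs $\beta^k$ to produce a smaller representative of $\mathcal{R}$. Identifying the endpoints of $\alpha^4$ with those of ${\alpha^4}'$ in the surviving case, meanwhile, relies on tracing the plat braid strands through the rightmost twist regions $\Tw_4^3$ and $\Tw_3^4$ to match the purple marked point with the correct endpoint of ${\alpha^4}'$.
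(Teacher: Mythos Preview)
Your proposal has a genuine gap, and the paper's proof takes a quite different route that avoids it.

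The central problem is the construction and use of the bridge disk \(E=R_\lambda\cup\tilde D^{\mathrm{other}}\). The arc \(E\cap F\) is the concatenation of an arc of \(\partial R\) (indeed disjoint from \(\beta'\)) with the two components of \(\beta^k\setminus\tilde\beta\). But \(\tau_4(\beta')\) crosses \(\beta^4\) many times (this is the whole point of the labyrinth), so \(\beta^4\setminus\tilde\beta\) typically meets \(\beta'\) in many points. Thus \(E\cap{D^4}'\) is not a single point of \(L\), and your appeal to ``arranging \(E\) in minimal position with \({D^4}'\)'' does not fix this: those intersections are essential, not removable by innermost/outermost moves. Relatedly, your claim that \(\alpha^4\) and \({\alpha^4}'\) share an endpoint is false for generic twist numbers; the braid permutes the strand endpoints.

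Your plan for eliminating the \(3\)-arc subcases also does not work as stated. Boundary-compressing \(R\) along \(\tilde D\) produces disks that \emph{do} meet \(\beta'\) (since \(\tilde\beta\) crosses \(\beta'\)), so they are not in \(\mathcal{R}\) and cannot give a smaller minimizer. The paper in fact \emph{exploits} precisely this: it chooses \(\lambda\) outermost in \(D^i\) (not in \(R\)), and then Lemma~\ref{lanes_intersect_tracks} --- which you never invoke --- forces \(\tilde\beta^i\) to contain \emph{exactly one} point of \(\beta'\). The two disks \(R^1,R^2\) from the boundary-compression therefore each meet \(\beta'\) exactly once, hence are nontrivial, nonparallel compressing disks; a partner-counting argument on the three regions \(F^{R^1},F^{R^2},F^A\) then produces a bridge disk \(\hat D\) for some \(\alpha^j\) with \(\hat d\cap\beta'\) a single endpoint, exhibiting the perturbation. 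The missing idea in your argument is this control on \(|\tilde\beta\cap\beta'|\), which comes from switching to ``outermost in \(D^i\)'' together with Lemma~\ref{lanes_intersect_tracks}.
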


\begin{figure}[h!]
\centering
\labellist \small\hair 2pt
\pinlabel {\(\beta'\)} at 154 10
\pinlabel {\(\lambda\)} at 203 215
\pinlabel {\(\tilde{D^i}\)} at 190 165
\pinlabel {\(\tilde{\beta^i}\)} at 150 107
\pinlabel {\(p\)} at 108 82 
\pinlabel {\(p'\)} at 296 82
\pinlabel {\(D^i\)} at 135 280
\pinlabel {\(\beta^i\)} at 45 107
\endlabellist
\includegraphics[scale=.5]{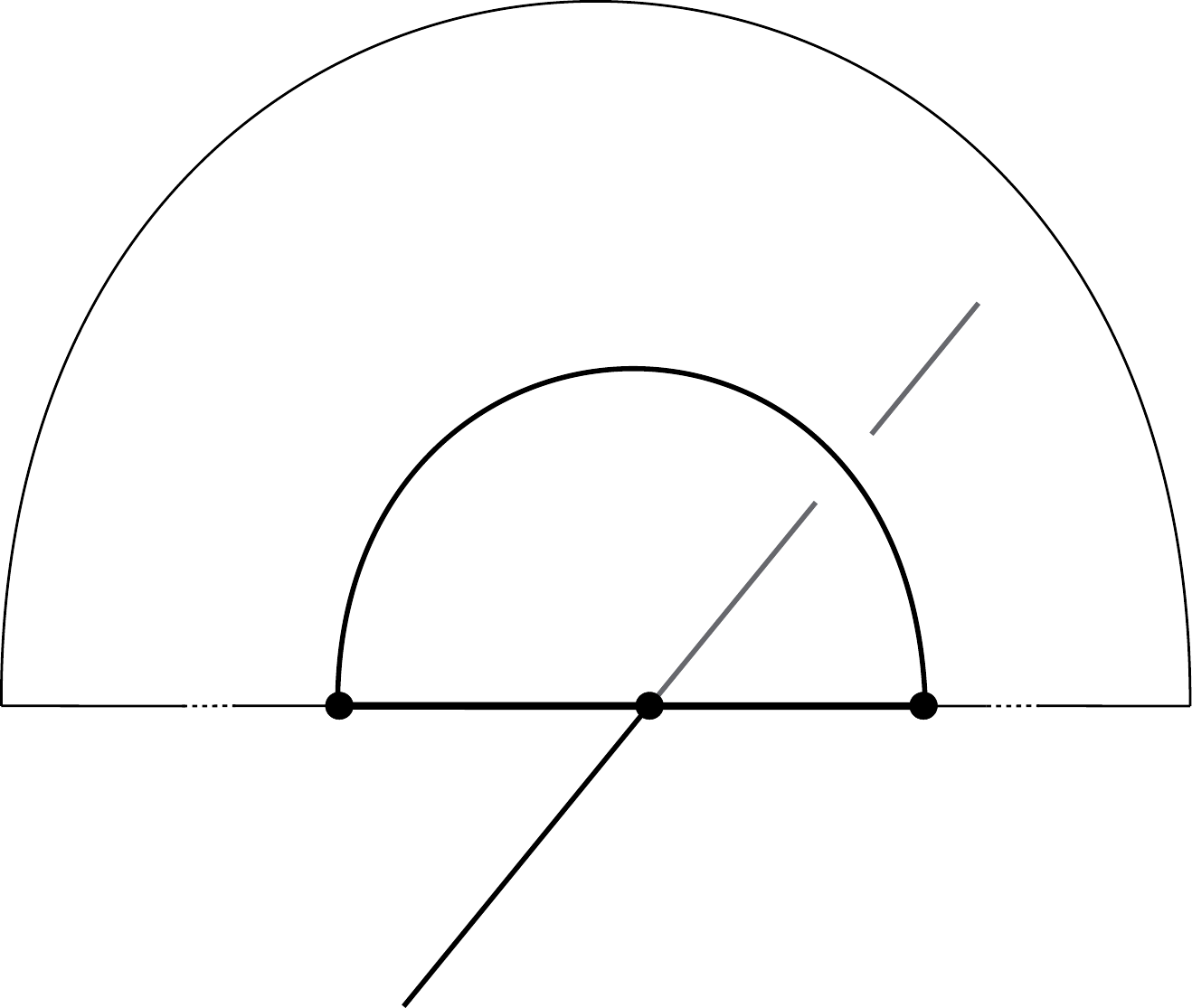}
\caption{\(\lambda\) straddles exactly one strand of \(\beta'\).}
\label{fig:one_beta_prime_bt_lambda}
\end{figure}

\begin{proof}
Assume \(\mathcal{R}\) is nonempty.
Consider a numbered arc \(\lambda\) which for some \(i\) is outermost in \(D^i\).
Let \(p\) and \(p'\) be the endpoints of \(\lambda\), and let \(\tilde{\beta^i}\) be the segment of \(\beta^i\) between \(p\) and \(p'\), as in Figure \ref{fig:one_beta_prime_bt_lambda}.
\(p\) and \(p'\) cannot lie in a single lane by Lemma \ref{lem:samelane}.
If the two lanes containing \(p\) and \(p'\) have at least one lane between them (i.e., if the lanes are not adjacent), then by Lemma \ref{lanes_intersect_tracks}, there must be at least one numbered point between \(p\) and \(p'\).
However, \(\lambda\) is outermost in \(D^i\), so that cannot happen.
Therefore the lanes containing \(p\) and \(p'\) must be adjacent, so \(\tilde{\beta^i}\) contains exactly one point of \(\beta'\). 

\(\lambda\) cuts off a small disk \(\tilde{D^i}\) from \(D^i\).
We boundary-compress \(R\) along \(\tilde{D^i}\), which results in two new disks, \(R^1\) and \(R^2\), properly embedded in \(M_+\).
Since each intersects \(\beta'\) exactly once, \(\partial R^1\) and \(\partial R^2\) cut \(F\) into an annulus and two disks, and each of the two disks contains an endpoint of \(\beta'\).
Label these three regions \(F^{R^1}\), \(F^{R^2}\), and \(F^A\).
\(R^i\) cannot be trivial since \(F\) is punctured to either side of \(\partial R^i\).
Therefore both \(R^i\) are compressing disks.
Notice that \(R^1\) and \(R^2\) cannot be parallel because the band sum dual to the boundary compression just performed would recover \(R\), but any band sum of parallel disks is a trivial disk.

Define a pair of marked points on \(F\) to be \textbf{partners} if they are endpoints of a common bridge arc above \(F\).
Let the endpoints of \(\beta'\) be \(q^1\) and \(q^2\).

\begin{figure}[h!]
\centering
\labellist \small\hair 2pt
\pinlabel {\(\beta'\)} at 296 117
\pinlabel {\(\hat{D}\)} at 415 161
\pinlabel {\(\alpha^j\)} at 444 174
\pinlabel {\(\partial R^2\)} at 525 188
\pinlabel {\(\partial R^1\)} at 240 128
\pinlabel {\(F^{R^1}\)} at 105 30
\pinlabel {\(F^{R^2}\)} at 372 100
\pinlabel {\(\hat{d}\)} at 440 125
\pinlabel {\(F^A\)} at 240 178
\endlabellist
\includegraphics[scale=.5]{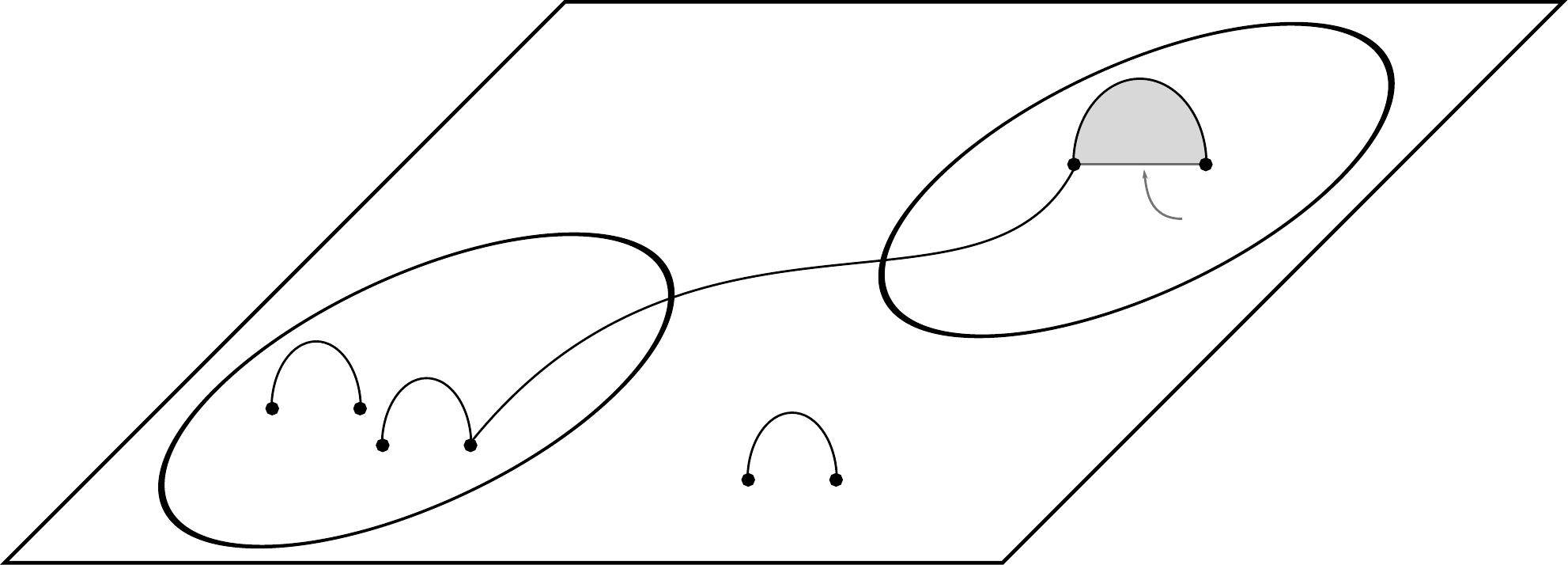}
\caption{In Case 1, the bridge arcs are all disjoint from \(R^1\) and \(R^2\).
One of the disks \(F^{R^1}\) or \(F^{R^2}\) must contain exactly one pair of partners, the endpoints of a bridge arc \(\alpha^i\).
Then \(L\) is perturbed at \({\alpha^4}'\).}
\label{fig:critical_or_perturbed}
\end{figure}

\textbf{Case 1:} For all four pairs of partners, it is the case that both points are contained in the same region: \(F^{R^1}\), \(F^{R^2}\), or \(F^A\).
Since an endpoint of \(\beta'\) lies in each of the disks, a pair of partners lies in each of the disks.
One pair of partners must be in \(F^A\);
otherwise, \(R^1\) and \(R^2\) would be parallel compressing disks.
This accounts for three of the four pairs of marked points.
The fourth pair may be in any of the three regions \(F^{R^1}\), \(F^{R^2}\), or \(F^A\).
This means that \(F^{R^1}\) or \(F^{R^2}\) must contain exactly one pair of partners.
See Figure \ref{fig:critical_or_perturbed}.
Suppose these partners are the endpoints of \(\alpha^j\) for some \(j\).
Then there is some bridge disk \(\hat{D}\) for \(\alpha^j\) (which is not necessarily isotopic to \(D^j\)) which is disjoint from \(R^i\).
Let \(\hat{d}=F\cap\hat{D}\).
\(\hat{d}\) and \(\beta'\) can be made disjoint except for their shared endpoint.
Then \(\hat{d}\cup\beta'\) is an embedded arc.
This means that \(\hat{D}\) and \({D^4}'\) are bridge disks above and below \(F\) which intersect in a single point of the link, so \(L\) is perturbed at \({\alpha^4}'\).
This contradicts the hypothesis of the proposition, so Case 1 is not possible.

\textbf{Case 2:} There is a pair of partners  which lie in different regions \(F^{R^1}\), \(F^{R^2}\), and \(F^A\).
But then the bridge arc connecting them must intersect \(R^1\) or \(R^2\), a contradiction since compressing disks are disjoint from the link.
Having arrived at a contradiction, we conclude \(R\) cannot exist, so \(\mathcal{R}\) is empty, and the proposition is proved.

\end{proof}

Define \(\mathcal{R'}\) to be the set of red compressing disks below \(F\) disjoint from \(B\).
The next result follows directly from Proposition \ref{prop:R_empty_or_L_perturbed} and rotational symmetry of \(L\).

\begin{cor}\label{cor:180symmetry}
If \(L\) is not perturbed at \(\alpha^1\), then \(\mathcal{R'}\) is empty.
\end{cor}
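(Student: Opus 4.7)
The plan is to deduce the corollary from Proposition \ref{prop:R_empty_or_L_perturbed} via a rotational symmetry of \(L\). What is needed is an orientation-preserving involution \(\phi\) of the pair \((S^3, L)\) that preserves \(F\) setwise, swaps its two sides, and carries the ``upper-left'' bridge triple \((\alpha^1, D^1, B)\) to the ``lower-right'' triple \(({\alpha^4}', {D^4}', B')\). For suitably symmetric choices of twist numbers --- for instance, demanding \(t_2^j = t_4^{4-j}\) for \(j\in\{1,2,3\}\) and \(t_3^j = t_3^{5-j}\) for \(j\in\{1,2,3,4\}\), which still leaves an infinite subfamily of admissible links --- \(\phi\) can be realized as a \(180^\circ\) rotation of \(S^3\) whose axis is chosen to simultaneously swap the top and bottom of the braid and to interchange the leftmost and rightmost bridge pairs.

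The key point is that \(\phi\) carries \(\mathcal{R'}\) bijectively onto \(\mathcal{R}\). Because \(\phi\) is a self-homeomorphism of \((S^3, L)\) preserving \(F\), it induces a simplicial automorphism of the disk complex for \(F\); since it permutes \(\mathcal{C}_2 = \{[B], [B']\}\), it preserves the partition \(\mathcal{C}_1 \sqcup \mathcal{C}_2\) and hence the red/blue labeling. So if \(R' \in \mathcal{R'}\) is a red disk below \(F\) disjoint from \(B\), then \(\phi(R')\) is a red disk above \(F\) disjoint from \(\phi(B) = B'\); since \(\beta'\) lies in the twice-marked-point disk in \(F\) bounded by \(\partial B'\), a small isotopy makes \(\phi(R')\) disjoint from \(\beta'\) as well, yielding \(\phi(R') \in \mathcal{R}\).

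With this bijection in hand the corollary is immediate: if \(\mathcal{R'}\) were nonempty, so would \(\mathcal{R}\) be, whence by the contrapositive of Proposition \ref{prop:R_empty_or_L_perturbed}, \(L\) would be perturbed at \({\alpha^4}'\); pushing the witnessing pair of bridge disks back through \(\phi^{-1}\) would then produce a perturbation at \(\alpha^1\), contradicting the hypothesis. The main obstacle lies in constructing \(\phi\) cleanly --- in particular, checking that the paper's sign convention on twist numbers is genuinely preserved by the chosen rotation, which takes some case analysis because the parity of the horizontal coordinate of a twist region depends on the parity of its row. Should a rigid rotation of \(S^3\) prove awkward to arrange, an equivalent route is to rerun the arguments of Sections \ref{sec:labyrinth}--\ref{sec:proof} verbatim with the roles of ``above'' and ``below'' of \(F\) (and of the ``leftmost upper'' and ``rightmost lower'' bridge data) systematically interchanged; the labyrinth construction, the track and numbered-point lemmas (Lemmas \ref{lem:samelane}--\ref{lem:threecolors}), and the final boundary-compression argument are all formally symmetric under this interchange.
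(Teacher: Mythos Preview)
Your primary route---imposing extra symmetry conditions on the twist numbers so that a rigid $180^\circ$ rotation literally fixes $L$---does not prove the corollary as stated: Corollary~\ref{cor:180symmetry} (and Theorem~\ref{thm:main_thm_specific}, which relies on it) is asserted for \emph{every} $L$ satisfying the sign hypotheses of Section~\ref{sec:setting}, not just for a symmetric subfamily. The gap is that you are demanding $\phi(L)=L$, which is unnecessary.

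The fix is lighter than either of your two options. Take the rotation $\phi:(x,y,z)\mapsto(9-x,\,y,\,5-z)$. It carries the $(4,4)$-plat frame to itself, preserves the sign of every twist number, and sends $L$ to a link $L^*=\phi(L)$ that is again a $2$-twisted $(4,4)$-plat with $t_2^j,t_4^j>0$ and $t_3^j<0$---so $L^*$ lies in the same family and Proposition~\ref{prop:R_empty_or_L_perturbed} applies to it. Under $\phi$ the data $(\alpha^1,D^1,B)$ for $L$ become $({\alpha^4}',{D^4}',B')$ for $L^*$; hence $\phi$ sends $\mathcal{R}'$ for $L$ into $\mathcal{R}$ for $L^*$, and sends a perturbation of $L$ at $\alpha^1$ to a perturbation of $L^*$ at ${\alpha^4}'$. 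Now Proposition~\ref{prop:R_empty_or_L_perturbed} for $L^*$ reads back as Corollary~\ref{cor:180symmetry} for $L$. This is exactly what the paper's one-line appeal to ``rotational symmetry of $L$'' means: symmetry of the plat \emph{setup}, not of the particular link. Your fallback of rerunning Sections~\ref{sec:labyrinth}--\ref{sec:proof} with all roles swapped is correct and amounts to the same thing, but is more work than needed once you drop the requirement $\phi(L)=L$.
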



\begin{figure}[h!]
\centering
\labellist \small\hair 2pt
\pinlabel {\(B'''\)} at 15 7
\pinlabel {\(B''\)} at 410 330
\pinlabel {\(B'\)} at 405 7
\pinlabel {\(B\)} at 25 325
\endlabellist
\includegraphics[scale=.3]{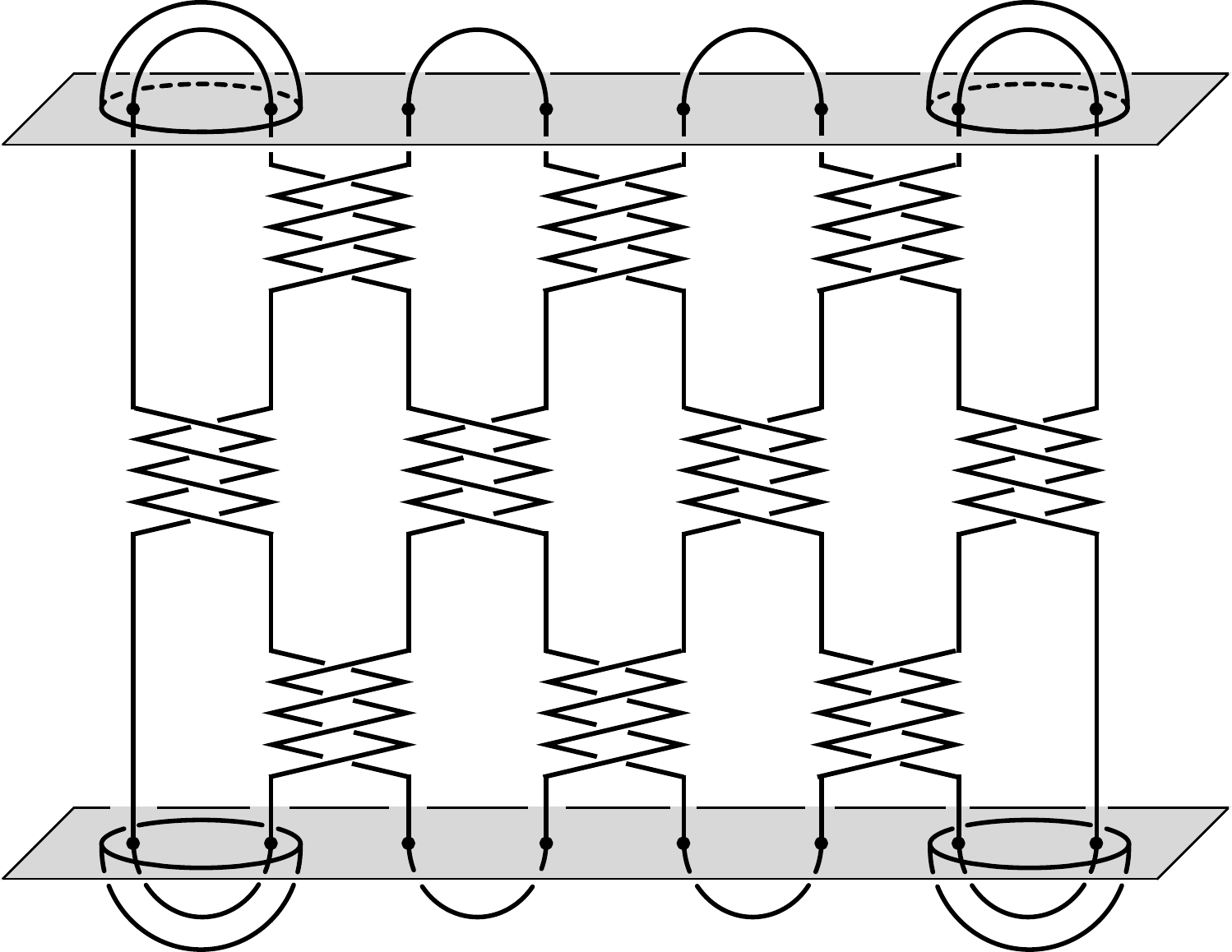}
\caption{A disjoint pair of blue disks (\(B\) and \(B'\)) and a disjoint pair of red disks (\(B''\) and \(B'''\)) which fulfill Condition 2 of the definition of a critical surface}
\label{fig:corner_disks}
\end{figure}

\begin{thm}\label{thm:main_thm_specific}
Let \(L\) be a 2-twisted link in \((4,4)\)-plat position with bridge sphere \(F\simeq F_4=f^{-1}(4)\subset S^3\).
Suppose \(L\) has twist numbers \(\left\{t_i^j\right\}\) such that for all \(j\), the twist numbers \(t_2^j,t_4^j\) are positive, and \(t_3^j\) are negative.
If \(L\) is perturbed at neither \(\alpha^1\) nor \({\alpha^4}'\), then \(F\) is a critical bridge sphere.
\end{thm}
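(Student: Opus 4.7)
The plan is to verify directly the two conditions from the combinatorial definition of a critical surface, using the partition $\mathcal{C}_1 \sqcup \mathcal{C}_2$ fixed at the end of Section \ref{sec:setting}. Since $\mathcal{C}_2 = \{[B],[B']\}$, essentially all the real content has already been packaged into the preceding lemmas, and the theorem is an assembly step.

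For Condition 2, I would exhibit the required disjoint pairs. The blue pair is immediate: $B$ (above) and $B'$ (below) are disjoint by construction (Figure \ref{fig:B_and_Bprime_disjoint}), so $[B],[B'] \in \mathcal{C}_2$ supplies Condition 2 for $i=2$. For $i=1$ I would take the two disks $B''$ and $B'''$ depicted in Figure \ref{fig:corner_disks}, e.g.\ the frontiers of regular neighborhoods of $D^4$ and ${D^1}'$ respectively. Each is a cap whose boundary encircles a different pair of marked points from the one enclosed by $\partial B$ or $\partial B'$, so neither is isotopic to $B$ or $B'$ and hence $[B''],[B'''] \in \mathcal{C}_1$. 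They are disjoint because $B''$ lies above $F$ and $B'''$ below, while their boundaries are small loops encircling disjoint $\beta$-arcs on $F$.

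For Condition 1, let $R$ be a red disk and $D$ a blue disk on opposite sides of $F$; since $\mathcal{C}_2 = \{[B],[B']\}$, I can assume $D \simeq B'$ (the case $D \simeq B$ being symmetric). The goal is $\partial R \cap \partial B' \neq \emptyset$. Suppose for contradiction that $\partial R$ and $\partial B'$ are disjoint on $F$. Because $\partial B'$ is the boundary of a regular neighborhood of $\beta'$ in $F$, an isotopy then pushes $\partial R$ off $\beta'$, so $R \in \mathcal{R}$. But the hypothesis that $L$ is not perturbed at ${\alpha^4}'$ together with Proposition \ref{prop:R_empty_or_L_perturbed} forces $\mathcal{R} = \emptyset$, a contradiction. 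The case where $R$ is below and $D \simeq B$ is handled identically, invoking Corollary \ref{cor:180symmetry} and the hypothesis that $L$ is not perturbed at $\alpha^1$ in place of ${\alpha^4}'$.

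I do not anticipate any serious obstacle in this final step; the difficult work was carried out in Sections \ref{sec:Cintersectsgamma} and \ref{sec:proof}, which culminated in showing that $\mathcal{R}$ and $\mathcal{R'}$ are empty under the non-perturbation hypotheses. The only points that require care are (i) confirming that $B''$ and $B'''$ from Figure \ref{fig:corner_disks} really are non-isotopic to $B$ and $B'$ (so that they lie in $\mathcal{C}_1$) and (ii) the standard translation between ``$\partial R$ disjoint from $\partial B'$'' and ``$\partial R$ disjoint from $\beta'$,'' which justifies invoking Proposition \ref{prop:R_empty_or_L_perturbed}. Once these are verified, the two conditions of the combinatorial definition of criticality are both met, so $F$ is a critical bridge sphere.
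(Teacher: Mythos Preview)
Your proposal is correct and follows essentially the same route as the paper's own proof: use Proposition~\ref{prop:R_empty_or_L_perturbed} and Corollary~\ref{cor:180symmetry} to get Condition~1, then exhibit the pairs \((B,B')\) and \((B'',B''')\) from Figure~\ref{fig:corner_disks} for Condition~2. The only small discrepancy is that the paper justifies the disjointness of \(B''\) and \(B'''\) by appealing to the \(180^\circ\) rotational symmetry of the construction (so the labyrinth analysis for \(\partial B'\) applies verbatim to \(\partial B'''\)), rather than by your informal remark about ``small loops encircling disjoint \(\beta\)-arcs''; you should invoke that symmetry explicitly, since \(\partial B'''\) is not literally a small loop once carried up to level~4.
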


\begin{proof}
By Proposition \ref{prop:R_empty_or_L_perturbed} and Corollary \ref{cor:180symmetry} we conclude that \(\mathcal{R}\) and \(\mathcal{R'}\) are empty.
In other words, all red disks above \(F\) intersect all blue disks below \(F\), and vice versa, which fulfills Condition 1 of the definition of a critical surface.
Observe that the link diagram representing \(\partial B'\) (in Section \ref{sec:labyrinth}) is disjoint from \(\partial B\), so \(B\) and \(B'\) are disjoint.
This gives us a pair of disjoint blue disks, one above \(F\) and one below.
Consider \(B''\) and \(B'''\) depicted in Figure \ref{fig:corner_disks}. 
(Explicitly, these are the frontiers of regular neighborhoods of \(D^4\) and \({D^1}'\) above and below \(F\), respectively.)
By a symmetric argument, these two compressing disks are a pair of disjoint red disks, one above \(F\) and one below.
These two pairs of disjoint disks fulfill the Condition 2 of the definition of a critical surface. 
\end{proof}

\begin{figure}[h!]
\centering
\labellist \small\hair 2pt
%
\pinlabel {even} at 183.5 197
\pinlabel {all \(t_i^j\geq 2\)} [l] at 500 197
\pinlabel {all \(t_i^j\leq -2\)} [l] at 500 133.5
\pinlabel {even} at 183.5 66
\pinlabel {all \(t_i^j\geq 2\)} [l] at 500 66
%
\endlabellist
\includegraphics[width=.4\textwidth]{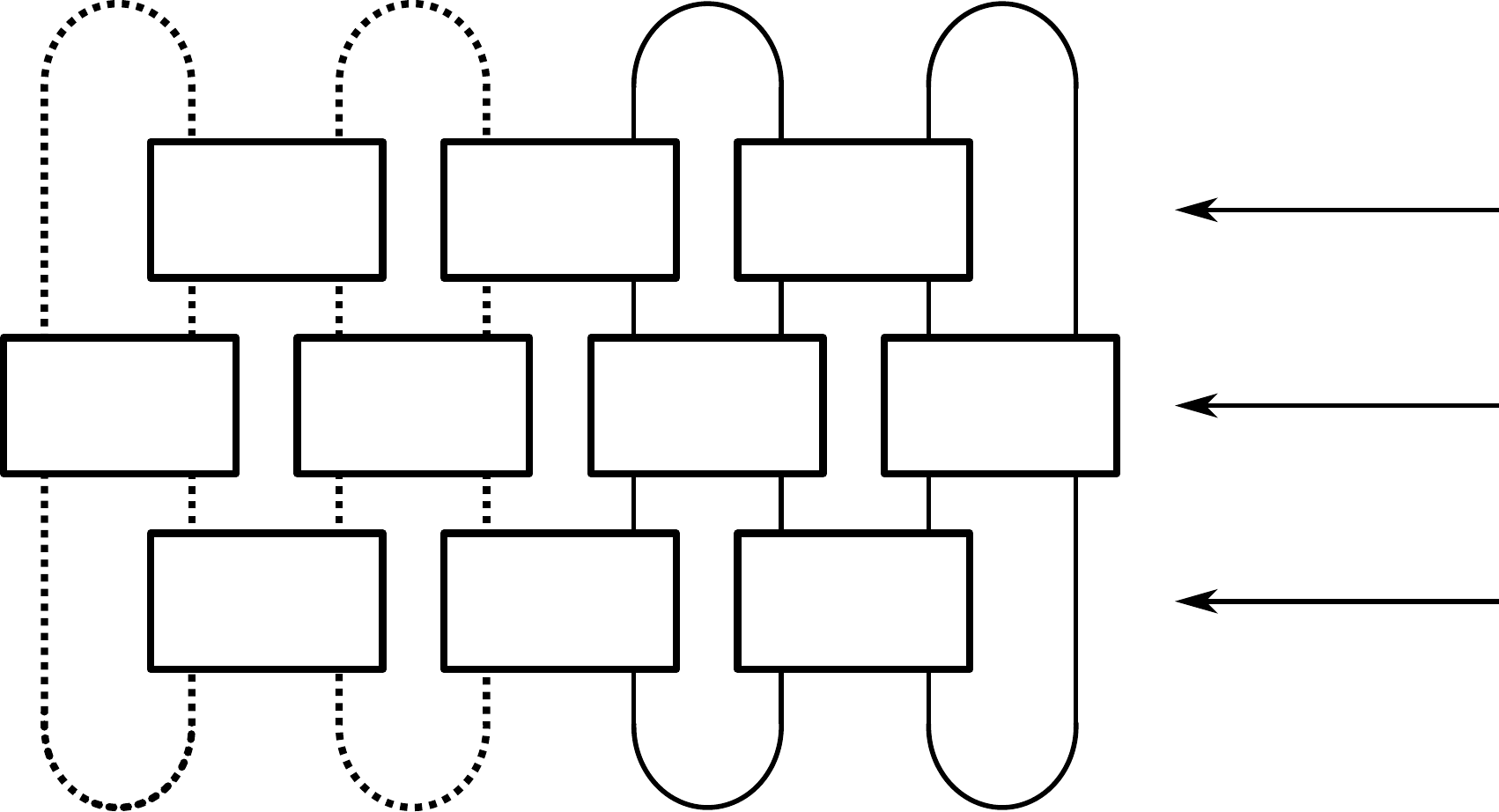}
\caption{This figure represents links \(L\) in the family \(\mathcal{L}\). Since both \(t_2^2\) and \(t_4^2\) are even, \(L\) will have at least two components, \(L^1\) (drawn with dotted arcs) and \(L^2\) (drawn with solid arcs).}
\label{fig:final_family}
\end{figure}

We finally come to the proof of Theorem \ref{thm:mainthm}.

\begin{thm}\label{thm:mainthm}
There is an infinite family of nontrivial links with critical bridge spheres.
\end{thm}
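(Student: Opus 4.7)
The plan is to exhibit an explicit infinite family $\mathcal{L}$ of links each satisfying the hypotheses of Theorem \ref{thm:main_thm_specific}, and then invoke that theorem. Define $\mathcal{L}$ to consist of all 2-twisted $(4,4)$-plat links $L$ whose twist numbers obey the sign conditions of Theorem \ref{thm:main_thm_specific} (namely $t_2^j,t_4^j\geq 2$ and $t_3^j\leq -2$ for every $j$), together with the parity constraints suggested by Figure \ref{fig:final_family} (in particular $t_2^2$ and $t_4^2$ even). The parity conditions guarantee that marked points $4$ and $5$ of the braid are never swapped by the twist regions $\Tw_2^2$ and $\Tw_4^2$, so the braid permutation preserves the partition $\{1,2,3,4\}\sqcup\{5,6,7,8\}$ and $L$ has at least two link components.

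That $\mathcal{L}$ is infinite follows because the magnitudes $|t_i^j|$ can be chosen arbitrarily large while preserving all other conditions. Since $D(L)$ is a reduced alternating diagram (reduced because every $|t_i^j|\geq 2$ leaves no nugatory crossings), the crossing number of $L$ equals $\sum_{i,j}|t_i^j|$ by the (now proved) Tait conjecture, so infinitely many inequivalent links arise. Nontriviality of each $L\in\mathcal{L}$ follows from the argument in Section \ref{sec:setting} that $D(L)$ is non-split together with the presence of at least two components: a non-split link with more than one component cannot be an unlink and is therefore nontrivial.

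The main obstacle is to verify that each $L\in\mathcal{L}$ is perturbed at neither $\alpha^1$ nor ${\alpha^4}'$. Suppose, for contradiction, that $L$ is perturbed at $\alpha^1$: then there exist a bridge disk $E$ above $F$ with $\partial E=\alpha^1\cup\hat{d}$ for an embedded arc $\hat{d}\subset F$, and a bridge disk $E'$ below $F$, such that $E\cap E'$ is a single point $p\in L$. Pushing $\partial E'$ up via $\tau_4$ as in Section \ref{sec:labyrinth} and running a track/gate analysis analogous to the one used to prove Proposition \ref{prop:R_empty_or_L_perturbed}, I will show that such bridge disks cannot exist when $|t_i^j|\geq 2$ throughout: the 2-twisted hypothesis forces $\hat{d}$ to cross $\tau_4(\partial B')$ in a nonzero number of points, contradicting $E\cap E'=\{p\}$. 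Non-perturbation at ${\alpha^4}'$ follows by the $180^\circ$ rotational symmetry of $L$ used in Corollary \ref{cor:180symmetry}.

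With non-perturbation in hand, Theorem \ref{thm:main_thm_specific} immediately yields that $F$ is a critical bridge sphere for each $L\in\mathcal{L}$, completing the proof. I expect the track-and-gate analysis in the non-perturbation step to be the main technical obstacle; although it is intuitively clear that sufficient twisting prevents cancellation of a top maximum with a bottom minimum, turning this intuition into a rigorous combinatorial argument requires carefully tracking the endpoints of $\partial E'$ through the labyrinth, much as Section \ref{sec:proof} tracks boundaries of red compressing disks.
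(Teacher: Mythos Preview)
Your proposal correctly identifies the family $\mathcal{L}$ and the role of the parity conditions in splitting $L$ into sublinks $L^1\cup L^2$, but the non-perturbation step is both incomplete and on the wrong track. You propose a ``track/gate analysis analogous to Proposition \ref{prop:R_empty_or_L_perturbed}'' to rule out bridge disks $E$, $E'$ witnessing a perturbation at $\alpha^1$, but you do not actually carry it out, and the sketched contradiction (that $\hat d$ must cross $\tau_4(\partial B')$) is not the right one: $B'$ is a cap for ${\alpha^4}'$, on the far side of the link from $\alpha^1$, so intersections of $\hat d$ with $\tau_4(\partial B')$ say nothing about $E\cap E'$. Adapting the labyrinth machinery to this situation would require building an analogous picture for a cap or bridge disk near $\alpha^1$, essentially redoing Sections \ref{sec:labyrinth}--\ref{sec:proof} from scratch.

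The paper's argument is far simpler, and you already have its key ingredient in hand. Once $L=L^1\cup L^2$ with $L^1$ living on strands $1$--$4$, observe that $L^1$ is itself in $2$-bridge position with respect to $F$, and $\alpha^1$ is one of its bridge arcs. If $L^1$ is a knot, its projection $D(L^1)$ is reduced and alternating with at least eight crossings, so by the (proved) Tait conjecture $L^1$ is not the unknot; hence $2$ is its bridge number, this position is minimal, and $L^1$ is not perturbed. If $L^1$ has two components, each is an unknot in $1$-bridge position, again not perturbed. Either way $L^1$ is not perturbed at $\alpha^1$, and since the lower bridge arcs of $L^2$ share no endpoints with $\alpha^1$, neither is $L$. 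The case of ${\alpha^4}'$ follows by the same rotational symmetry you invoke. This replaces your proposed labyrinth computation with a couple of lines of classical $2$-bridge reasoning; the point of the parity constraints on $t_2^2,t_4^2$ is precisely to make this shortcut available.
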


\begin{proof}

All we must do to prove Theorem \ref{thm:mainthm} is to demonstrate that there exists an infinite family of links which satisfy the hypothesis of Theorem \ref{thm:main_thm_specific}.
Let \(\mathcal{L}\) be the set of 2-twisted links in \((4,4)\)-plat position with twist numbers \(\left\{t_i^j\right\}\), such that all twist numbers in the top and bottom rows are positive, and all twist numbers in the middle row are negative, and such that \(t_2^2\) and \(t_4^2\) are both even.
The parity requirement guarantees that if \(L\in\mathcal{L}\), then \(L\) is a union of two different links: \(L^1\) and \(L^2\).
See Figure \ref{fig:final_family}.
Consider \(L^1\);
it is a link in a 2-bridge position (with respect to \(F\)), so it can have at most two components.

\textbf{Case 1:} \(L^1\) has one component, i.e., it is a knot.
Let \(D(L^1)\) be the diagram for \(L^1\) obtained by projection to the \((x,z)\)-plane.
\(D(L^1)\) is alternating (for the same reason \(D(L)\) was shown to be alternating in Section \ref{sec:setting}).
Observe that \(D(L^1)\) is a reduced diagram.
(In other words, as a graph, it has no cut vertex.)
One of the famous Tait Conjectures states that a reduced alternating diagram for a knot realizes the knot's minimal crossing number \cite{Lickorish}.
(This was proved in 1987 by Kauffman \cite{Kauffman} and Murasugi \cite{Murasugi}.)
Therefore as \(D(L^1)\) has at least 8 crossings, \(L^1\) is not the unknot, so it must be a 2-bridge knot.
Since \(L^1\) is in bridge position, it is by definition not perturbed at any of its bridge arcs, one of which is \(\alpha^1\).
We conclude that \(L^1\) is not perturbed at \(\alpha^1\). 
Since \(L^1\) and \(L^2\) are not connected to each other, \(\alpha^1\) cannot share an endpoint with either of the two bridge arcs below \(F\) contained in \(L^2\).
Therefore we conclude that \(L\) is not perturbed at \(\alpha^1\).

\textbf{Case 2:} \(L^1\) has two components.
Then each component is individually an unknot in bridge position.
Unknots are not perturbed, so in particular, \(L^1\) is not perturbed at \(\alpha^1\).
From there, it follows as in Case 1 that \(L\) is not perturbed at \(\alpha^1\) either.

By a similar argument, \(L\) is not perturbed at \({\alpha^4}'\) either.
Therefore \(L\) satisfies Theorem \ref{thm:main_thm_specific} and its bridge sphere \(F\) is critical.
This proves \(\mathcal{L}\) is an infinite family of links satisfying Theorem \ref{thm:mainthm}.
\end{proof}

\newpage

\end{document}